\theoremstyle{plain}
\newtheorem{thm}{Theorem}
\newtheorem{lemma}{Lemma}
\theoremstyle{definition}
\newtheorem{definition}{Definition}
\newtheorem{proposition}{Proposition}
\newtheorem{example}{Example}
\theoremstyle{remark}
\newtheorem{remark}{Remark}
 \DeclareMathOperator\hdim{\dim_H}
\numberwithin{equation}{section}
\begin{document}
\title[Rigidity Theorem of Graph-directed Fractals]{Rigidity Theorem of
Graph-directed Fractals}
\author{Li-Feng Xi$^{\ast}$}
\address{Institute of Mathematics, Zhejiang Wanli University, Ningbo,
Zhejiang, 315100, P.~R. China}
\email{xilifengningbo@yahoo.com}
\author{Ying Xiong}
\address{Department of Mathematics, South China University of Technology,
Guangzhou, 510641, P.~R. China}
\email{xiongyng@gmail.com}
\subjclass[2000]{28A80}
\keywords{fractal, bilipschitz equivalence, graph-directed sets,
self-similar set}

\begin{abstract}
In this paper, we identify two fractals if and only if they are bilipschitz
equivalent. Fix ratio $r,$ for dust-like graph-directed sets with ratio $r$
and integer characteristics, we show that they are rigid in the sense that
they are uniquely determined by their Hausdorff dimensions. Using this
rigidity theorem, we show that in some class of self-similar sets, two
totally disconnected self-similar sets without complete overlaps are
bilipschitz equivalent.
\end{abstract}

\thanks{$^{\ast }$Corresponding author. This work is supported by NSF of China (Nos. 11071224, 11101159), NCET and
Morningside Center of Mathematics.}
\maketitle

\section{Introduction}

Falconer and Marsh \cite{FalMa92} pointed out that \textquotedblleft
topology\textquotedblright\ may be regarded as the study of equivalence
classes of sets under \emph{homeomorphism}, and \textquotedblleft fractal
geometry\textquotedblright\ is thought of as the study of equivalence
classes of fractals under \emph{bilipschitz mappings}.

We \emph{identify} two metric spaces $(B,$d$_{B})$ and $(B^{\prime },$d$%
_{B^{\prime }})$ if and only if they are bilipschitz equivalent, i.e., there
is a bijection $f:(B,$d$_{B})\rightarrow (B^{\prime },$d$_{B^{\prime }})$
such that for all $x,y\in B,$ $C^{-1}$d$_{B}(x,y)\leq $d$_{B^{\prime
}}(f(x),f(y))\leq C$d$_{B}(x,y),$ where $C>0$ is a constant.

Falconer and Marsh \cite{FalMa89} obtained a \textbf{rigidity theorem }on%
\textbf{\ }quasi-circles: they are uniquely determined by their Hausdorff
dimensions, i.e., two quasi-circles are bilipschitz equivalent if and only
if they have the same Hausdorff dimension.

\subsection{A class of graph-directed fractals}

\

Fix $r\in (0,1).$ Let $G$ be a directed graph, consisting of $p$ vertices $%
\{1,\cdots ,p\}$ and directed edges among these vertices. Suppose $(X,$d$)$
is a compact metric space such that for each edge $e$ there is a contracting
similitude $S_{e}:X\rightarrow X$ with ratio $r$, i.e., d$%
(S_{e}x_{1},S_{e}x_{2})=r$d$(x_{1},x_{2})$ for all $x_{1},x_{2}\in X$. Let $%
\mathcal{E}_{i,j}$ be the set of all the edges from vertex $i$ to vertex $j\
$and $A_{p\times p}=(a_{i,j})_{1\leq i,j\leq p}$ the \emph{adjacent matrix}
of $G$ defined by $a_{i,j}=\#\mathcal{E}_{i,j}.$ By \cite{MW} there is a
unique family $\{K_{1},\cdots ,K_{p}\}$ of compact sets in $\mathbb{(}X,$d$%
\mathbb{)}$ such that
\begin{equation}
K_{i}=\bigcup\nolimits_{j}\bigcup\nolimits_{e\in \mathcal{E}%
_{i,j}}S_{e}(K_{j}).\text{ }  \label{MWW}
\end{equation}%
We say that $\{K_{i}\}_{i}$ are \emph{dust-like }graph-directed sets with
ratio $r$, if the right hand of (\ref{MWW}) is a disjoint union for every $i$%
. We also say that $\{K_{i}\}_{i}$ have integer characteristic $m\geq 2$, if
there is a positive vector $v>0$ such that%
\begin{equation}
Av=mv.  \label{assumption2}
\end{equation}

In this paper, we try to obtain the rigidity theorem on a class $\mathfrak{A}%
^{r}$ of graph-directed fractals, where $\mathfrak{A}^{r}=\cup
_{m=2}^{\infty }\mathfrak{A}_{m}^{r},$ and $\mathfrak{A}_{m}^{r}=\{K:K=K_{1}$
for some dust-like graph-directed sets $\{K_{i}\}_{i}$ with ratio $r$ and
integer characteristic $m$\}.

\begin{remark}
It will be proved in Section 2 that assumption $(\ref{assumption2})$ is the
equivalent to%
\begin{equation}
0<\mathcal{H}^{s}(K_{1}),\cdots ,\mathcal{H}^{s}(K_{p})<\infty \text{ with }%
s=-\log m/\log r,\text{ }m(\geq 2)\in \mathbb{N}.  \label{assumption1}
\end{equation}
\end{remark}

\begin{remark}
The directed graph is said to be \emph{transitive} if for any vertices $i,j,$
there exists a directed path beginning at $i$ and ending at $j.$ In this
case, the adjacent matrix $A$ is \emph{irreducible}. By Perron-Frobenius
Theorem of non-negative matrix, assumption (\ref{assumption2}) on \emph{%
transitive} graph is equivalent to the following assumption: the
Perron-Frobenius eigenvalue of $A$ is an integer $m\geq 2.$
\end{remark}

Let $\Sigma _{m}^{r}=\{1,2,\cdots ,m\}^{\infty }$ be the symbolic system
equipped with the metric
\begin{equation*}
d(x_{0}x_{1}\cdots ,y_{0}y_{1}\cdots )=r^{\min \{k|\text{ }x_{k}\neq
y_{k}\}}.
\end{equation*}%
Then $\Sigma _{m}^{r}$ is a self-similar set generated by $m$ similitudes of
ratio $r.$

Our main result, the rigidity theorems on $\mathfrak{A}^{r}$, are stated as
follows.

\begin{thm}
If $\{K_{i}\}_{i=1}^{p}\ $are dust-like graph-directed sets with ratio $r$
and integer characteristic $m$, then $K_{1},\cdots ,K_{p}$ are bilipschitz
equivalent to $\Sigma _{m}^{r}$.
\end{thm}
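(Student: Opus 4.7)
The plan is to upgrade the graph-directed system $\{K_i\}$ to a refined system in which every vertex has out-degree exactly $m$, and then to identify each refined attractor with $\Sigma_m^r$ via the standard partition criterion for bilipschitz equivalence of dust-like Cantor sets.

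Since $m$ is an integer eigenvalue of the integer matrix $A$, its $m$-eigenspace is cut out by rational linear equations, so after clearing denominators I may assume $v=(v_1,\dots,v_p)\in\mathbb{Z}_{>0}^p$ with $Av=mv$. I construct a refined directed graph $\hat G$ on the vertex set $\hat V=\{(i,\ell):1\le i\le p,\ 1\le\ell\le v_i\}$ as follows: for each original edge $e$ from $i$ to $j$ and each $\ell\in\{1,\dots,v_j\}$, introduce a refined edge $(e,\ell)$ with target $(j,\ell)$ that carries the same similitude $S_e$. The resulting $mv_i=\sum_j a_{ij}v_j$ refined out-edges at the $v_i$ copies of $i$ are then distributed into $v_i$ groups of size exactly $m$, one per source $(i,\ell)$. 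Mauldin--Williams applied to $\hat G$ produces compact sets $\{\hat K_\alpha\}$ satisfying
\[
\hat K_{(i,\ell)}=\bigcup_{(e,\ell')\text{ out of }(i,\ell)} S_e\bigl(\hat K_{(t(e),\ell')}\bigr),
\]
and uniqueness of the Mauldin--Williams fixed point applied to $L_i:=\bigcup_\ell \hat K_{(i,\ell)}$ forces $L_i=K_i$.

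A careful choice of the refined-edge distribution then makes the partition $K_i=\bigsqcup_\ell \hat K_{(i,\ell)}$ genuinely disjoint and the refined system itself dust-like. Consequently each $\hat K_{(i,\ell)}$ admits canonical nested partitions into exactly $m^n$ cells at stage $n$, each cell the image of some $\hat K_\beta$ under an $r^n$-similitude. The cell diameters are bounded above by $r^n\max_\beta\operatorname{diam}(\hat K_\beta)$, and two distinct cells at stage $n$ either (Case A) come from original paths that diverge at some step $k\le n$, in which case their distance is at least $r^{k-1}\delta\ge r^{n-1}\delta$, where $\delta>0$ is the dust-like separation in $\{K_i\}$, or (Case B) share the same original path but differ in a slot label at their endpoint, in which case their distance is at least $r^n\delta'$, where $\delta'>0$ is the minimum mutual distance between distinct $\hat K_{(j,\ell)}$'s. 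Thus the intercell separation at stage $n$ is $\gtrsim r^n$, and the standard bilipschitz criterion for two compact spaces with nested $m$-branching partitions of diameter $\lesssim r^n$ and separation $\gtrsim r^n$ yields a bilipschitz bijection $\hat K_{(i,\ell)}\to\Sigma_m^r$.

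Finally, $K_i=\bigsqcup_{\ell=1}^{v_i}\hat K_{(i,\ell)}$ is a disjoint union of $v_i$ bilipschitz copies of $\Sigma_m^r$, and one concludes $K_i\sim\Sigma_m^r$ by invoking the (separate) lemma that any finite disjoint union of bilipschitz copies of $\Sigma_m^r$ with bounded gaps is itself bilipschitz to $\Sigma_m^r$. I anticipate two main obstacles: (i) justifying that the refined-edge distribution can be arranged so that the $\hat K_{(i,\ell)}$ are pairwise disjoint within $K_i$---the natural tactic is first to pass to the $N$-step graph for $N$ large, so that the individual weights $v_{t(\omega)}$ become small compared with $m^N v_i$ and one has enough slack for a disjoint-lift distribution; and (ii) the union lemma for finite disjoint unions of $\Sigma_m^r$'s, an elementary but nontrivial Cantor-set combinatorial construction which I would prove by prefix-coding $v_i$ copies into cylinders at a common depth and recursively balancing remainders.
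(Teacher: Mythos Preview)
Your refinement idea is natural, but obstacle~(i) is a genuine gap, and the fix you sketch does not close it. Observe first that disjointness of the $\hat K_{(i,\ell)}$ is \emph{equivalent} to the refined system being dust-like, and that whenever two refined out-edges share the same underlying similitude $S_e$ the disjointness of their images reduces circularly to disjointness at the target level. A concrete failure: if the distribution gives both $(j,\ell)$ and $(j,\ell')$ a refined self-loop carrying the same $S_f$, the fixed point of $S_f$ lies in both $\hat K_{(j,\ell)}$ and $\hat K_{(j,\ell')}$. The clean way to break the circularity is to assign every underlying $N$-step path $\omega$ \emph{wholly} to a single source $(i,\ell)$; this turns (i) into partitioning the multiset of terminal weights $\{v_{t(\omega)}\}_\omega$ (with total $m^N v_i$) into $v_i$ blocks each summing exactly to $m^N$. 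That is a nontrivial Frobenius-coin problem---precisely the content of the paper's Lemma~1---and the hypotheses of that lemma (sufficiently many paths to \emph{every} occurring terminal type) become available only after restricting to an irreducible block and iterating until the block matrix is strictly positive. ``Enough slack'' is not enough.

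Your uniform scheme therefore cannot handle reducible $A$ in one stroke. When $i$ sits in a non-terminal strongly connected component with restricted spectral radius $\rho(D)<m$, the number of $N$-step paths that remain inside that component grows only like $\rho(D)^N$ and may be far too small for the Frobenius hypothesis, so the required whole-path partition need not exist from the $i$-side alone. The paper accordingly proceeds in two genuinely different stages: Proposition~3 applies the Frobenius lemma to each terminal irreducible block, builds a model of those $K_i$ inside $\Sigma_m^r$ (the sets $\Pi_1^{a_i}$), and invokes the graph-equivalence Lemma~7; Proposition~4 then treats higher-rank blocks by an explicit inductive construction of the bilipschitz map that \emph{consumes} the already-established equivalences $K_j\simeq\Sigma_m^r$ for lower-rank $j$, using $\rho(D)<m$ in an essential way. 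Your proposal is missing both the Frobenius combinatorics and this block-triangular induction.
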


\begin{thm}
The graph-directed fractals in $\mathfrak{A}^{r}$ are rigid in the sense
that they are uniquely determined by their Hausdorff dimensions or
characteristics. Furthermore, $K\in \mathfrak{A}_{m_{1}}^{r_{1}}\ $and $%
K^{\prime }\in \mathfrak{A}_{m_{2}}^{r_{2}}$ are bilipschitz equivalent if
and only if $r_{1}^{k_{1}}=r_{2}^{k_{2}},m_{1}^{k_{1}}=m_{2}^{k_{2}}$ for
some integers $k_{1},k_{2}\geq 1$.
\end{thm}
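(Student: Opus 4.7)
The plan is to use Theorem 1 to reduce Theorem 2 to a classification of the symbolic ultrametric spaces $\Sigma_{m}^{r}$ up to bilipschitz equivalence, and then to determine when two such spaces are bilipschitz equivalent across different parameters $(m,r)$.

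By Theorem 1, every $K \in \mathfrak{A}_m^r$ is bilipschitz equivalent to $\Sigma_m^r$, so the classification of $\mathfrak{A}^r$ under bilipschitz equivalence is reduced to that of the symbolic spaces. The first assertion (rigidity within a fixed $\mathfrak{A}^r$) now follows immediately: bilipschitz maps preserve Hausdorff dimension, and $\hdim\Sigma_m^r = \log m/(-\log r)$ is strictly increasing in $m$ for each fixed $r$, so distinct characteristics yield non-equivalent sets, while Theorem 1 guarantees equivalence whenever characteristics agree.

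For the second assertion I again reduce to the symbolic spaces. The sufficiency direction is direct: if $r_1^{k_1} = r_2^{k_2}$ and $m_1^{k_1} = m_2^{k_2}$, then grouping consecutive symbols of $\{1,\dots,m_i\}^{\infty}$ into blocks of length $k_i$ realises $\Sigma_{m_i}^{r_i}$, after relabeling, as $\Sigma_{m_i^{k_i}}^{r_i^{k_i}}$; the blocking identification is an explicit bilipschitz map with constant $r_i^{-(k_i-1)}$. Since the two relabeled spaces have the same parameters $(m_i^{k_i}, r_i^{k_i})$ by hypothesis, they coincide, which produces the desired bilipschitz equivalence between $\Sigma_{m_1}^{r_1}$ and $\Sigma_{m_2}^{r_2}$.

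The necessity direction is the crux. Let $f:\Sigma_{m_1}^{r_1}\to\Sigma_{m_2}^{r_2}$ be $C$-bilipschitz. Since Hausdorff dimension is a bilipschitz invariant, $\log m_1/(-\log r_1) = \log m_2/(-\log r_2)$, whence $\alpha := \log r_1/\log r_2 = \log m_1/\log m_2$. Once $\alpha \in \mathbb{Q}$ is established, writing $\alpha = k_2/k_1$ in lowest terms yields both required identities simultaneously. To prove rationality, I would exploit the ultrametric structure: in $\Sigma_m^r$ every closed ball of radius $r^k$ is exactly a level-$k$ cylinder. Consequently, for each level-$k$ cylinder $I$ of $\Sigma_{m_1}^{r_1}$ and each $x\in I$, the image $f(I)$ is sandwiched between cylinders around $f(x)$: it contains a level-$\ell_2(k)$ cylinder and is contained in a level-$\ell_1(k)$ cylinder of $\Sigma_{m_2}^{r_2}$, with $\ell_i(k) = k\alpha + O(1)$ and $\ell_2(k)-\ell_1(k) = O(1)$. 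Combining this sandwich with $s$-Ahlfors regularity (so that $\mathcal{H}^s$-measures of cylinders and of their images under $f$ agree up to multiplicative constants) and the bijectivity of $f$ (so that the images tile $\Sigma_{m_2}^{r_2}$) should force the fractional parts $\{k\alpha\}$ to accumulate at only finitely many values, which by equidistribution is impossible unless $\alpha\in\mathbb{Q}$.

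I expect the last step to be the main obstacle: it is a genuine rigidity statement, of the same spirit as the Falconer-Marsh rigidity theorem for quasi-circles cited in the introduction, and turning the approximate scaling information coming from the sandwich inclusions into exact multiplicative commensurability of the ratios requires a careful combinatorial and measure-theoretic argument rather than a routine computation.
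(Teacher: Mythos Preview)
Your reduction via Theorem~1 to the classification of the symbolic spaces $\Sigma_m^r$ is exactly what the paper does. The difference is in how the classification of $\Sigma_m^r$ is handled: the paper does not prove it at all, but records it as Lemma~\ref{sym} with a citation to Cooper--Pignataro~\cite{CooPi88}, Falconer--Marsh~\cite{FalMa92}, and Proposition~11.8 of David--Semmes~\cite{DavSe97}, and then simply observes that Theorem~2 follows from Theorem~1 together with this lemma. So the entire ``crux'' you identify---the necessity direction showing $\alpha=\log r_1/\log r_2\in\mathbb{Q}$---is outsourced to the literature, and your sketch of it (which you rightly flag as incomplete) is not needed for the paper's purposes.

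If you do want to carry out the necessity argument yourself, your outline is in the right spirit and close to how Falconer--Marsh proceed, but the step ``the fractional parts $\{k\alpha\}$ accumulate at only finitely many values'' is not yet justified by what you have written: the sandwich $\ell_2(k)-\ell_1(k)=O(1)$ together with Ahlfors regularity only tells you that the image of a level-$k$ cylinder has $\mathcal{H}^s$-measure comparable to $m_2^{-k\alpha}$, which is automatic and does not by itself constrain $\{k\alpha\}$. The actual argument (see \cite{FalMa92}) uses that the image of a cylinder, being both open and closed, must be a \emph{finite union} of cylinders of $\Sigma_{m_2}^{r_2}$, and then compares exact measures (not just measures up to constants) to force a multiplicative relation $m_1^a=m_2^b$ among the alphabet sizes. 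That exactness is the missing ingredient in your sketch.
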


\subsection{Bilipschitz equivalence of self-similar fractals}

$\ $

An important fact is that two self-similar sets with the same dimension need
\emph{not} be bilipschitz equivalent. For example, let $3r^{\log 2/\log
3}=1, $ and a self-similar set generated by $S_{1}(x)=rx,$ $%
S_{2}(x)=rx+(1-r)/2,$ $S_{3}(x)=rx+1-r.$ Then this self-similar set and the
Cantor ternary set have the same Hausdorff dimension$\ \log 2/\log 3,$ but
they are not bilipschitz equivalent, please refers to Cooper and Pignataro
\cite{CooPi88}, David and Semmes \cite{DavSe97} and Falconer and Marsh \cite%
{FalMa92}.

Falconer and Marsh~\cite{FalMa92} gave a necessary condition for
self-similar sets satisfying the strong separation condition (or \textbf{%
without overlaps}) to be bilipschitz equivalent, and a necessary and
sufficient condition was also obtained by Xi~\cite{Xi10}. Please refers to
Deng and Wen et al \cite{DWXX}, Llorente and Mattila \cite{LM}, Mattila and
Saaranen \cite{MS} and Rao, Ruan and Wang \cite{RRW}.

For self-similar sets \textbf{with overlaps}, we will show two cases:
self-similar arcs and $\{1,3,5\}$-$\{1,4,5\}$ problem with its
generalization. Wen and Xi \cite{WenXi03} showed that the rigidity is
invalid on self-similar arcs, in fact they constructed two self-similar
arcs, with the same Hausdorff dimension, which are not bilipschitz
equivalent. 
Define two self-similar sets respectively by $F_{i}=F_{i}/5\cup
((i+1)/5+F_{i}/5)\cup (4/5+F_{i}/5)\ (i=1,2).$ Then both $F_{1}$ and $F_{2}$
satisfy the open set condition and have the same Hausdorff dimension $\log
3/\log 5$. David and Semmes \cite{DavSe97} asked whether they are
bilipschitz equivalence, and the question is called \textquotedblleft $%
\{1,3,5\}$-$\{1,4,5\}$ problem". \cite{RaRuX06} gave an affirmative answer
to the problem. Furthermore, Xi and Xiong \cite{XiXiong2} answered the
problem in high dimensional Euclidean spaces: Let $B,B^{\prime }\subset
\{0,1,\cdots ,(n-1)\}^{l}.$ Given two totally disconnected self-similar sets
\begin{equation}
E_{B}=\bigcup\nolimits_{b\in B}(\frac{E_{B}}{n}+\frac{b}{n})\text{ and }%
E_{B^{\prime }}=\bigcup\nolimits_{b^{\prime }\in B^{\prime }}(\frac{%
E_{B^{\prime }}}{n}+\frac{b^{\prime }}{n}),  \label{total}
\end{equation}%
then $E_{B}$ and $E_{B^{\prime }}$ are bilipschitz equivalent if and only if
$\#B=\#B^{\prime }.$


\subsection{Self-similar fractals with overlaps}

$\ $\

Self-similar sets with overlaps have very \emph{complicated }structures. For
example, the open set condition (OSC), which means the overlaps are little,
was introduced by Moran \cite{Moran} and studied by Hutchinson \cite{H}.
Schief \cite{Schief}, Bandt and Graf \cite{B2} showed the relation between
the open set condition and the positive Hausdorff measure. Falconer \cite{F}
proved some \textquotedblleft generic\textquotedblright\ result on Hausdorff
dimension of self-similar sets without the assumption about the open set
condition. One useful notion \textquotedblleft
transversality\textquotedblright\ to study self-similar sets (or measures)
with overlaps can be found e.g. in Keane, Smorodinsky and Solomyak \cite{K},
Pollicott and Simon \cite{PS}, Simon and Solomyak \cite{Simon} and Solomyak
\cite{Solo}. Feng and Lau \cite{FL}, Lau and Ngai \cite{LN} studied the weak
separation condition. Please refers to Bandt and Hung \cite{B4}, Sumi \cite%
{Sumi} for some recent work.

For self-similar set $E_{\lambda }=E_{\lambda }/3\cup (E_{\lambda
}/3+\lambda /3)\cup (E_{\lambda }/3+2/3)$, a conjecture of Furstenberg says
that $\hdim E_{\lambda }=1$ for any $\lambda $ irrational. \'{S}wi\c{a}tek
and Veerman \cite{SV} proved that $\hdim E_{\lambda }>0.767$ for every $%
\lambda $ irrational. Kenyon \cite{K2}, Rao and Wen \cite{RW} obtained that $%
\mathcal{H}^{1}(E_{\lambda })>0$ if and only if $\lambda =p/q\in \mathbb{Q}$
with $p\equiv q\not\equiv 0($mod$3).$ The key idea of \cite{RW} is
\textquotedblleft graph-directed struture\textquotedblright\ introduced by
Mauldin and Williams \cite{MW}. In particular, Rao and Wen \cite{RW} and Wen
\cite{Wen}\ studied the self-similar sets with \textquotedblleft complete
overlaps\textquotedblright .

\begin{definition}
We say that a self-similar set $E=\cup _{i}S_{i}(E)$ has complete overlaps,
if there are two distinct sequences $i_{1}\cdots i_{t},j_{1}\cdots
j_{t^{\prime }}$ such that%
\begin{equation*}
S_{i_{1}}\circ \cdots \circ S_{i_{t}}\equiv S_{j_{1}}\circ \cdots \circ
S_{j_{t^{\prime }}}.
\end{equation*}
\end{definition}

\begin{remark}
For example, the self-similar sets in $(\ref{total})$ has no complete
overlaps.
\end{remark}

\subsection{A class of self-similar fractals in $\mathfrak{A}^{1/n}$}

$\ $

Suppose $\Gamma $ is a discrete additional group in $\mathbb{R}^{l},$ and $%
\mathbb{G}$ is a finite isometric group on $\Gamma ,$ i.e., $\mathbb{G}$ is
a finite group and for any $g\in \mathbb{G},$ we have $g:\Gamma \rightarrow
\Gamma $, $g(0)=0$ and $|g(a_{1})-g(a_{2})|=|a_{1}-a_{2}|$ for all $%
a_{1},a_{2}\in \Gamma ,$ which implies that $g$ can be extended to a \emph{%
linear} isometry of $\mathbb{R}^{l}.$ In particular, $g(B(0,\delta
))=B(0,\delta )$ for any closed ball with center $0$ and radius $\delta .$
Fix an integer $n\geq 2$. Consider the similitude
\begin{equation}
S(x)=g(x)/n+b\text{ where }g\in \mathbb{G}\text{ and }b\in \Gamma .
\label{similarity}
\end{equation}%
Let $\Lambda $ be the collection of all the self-similar sets in $\mathbb{R}%
^{l}$\ generated by contractive similitudes in the form of (\ref{similarity}%
).

Applying Theorem 1 to this class of self-similar sets, we have

\begin{thm}
Suppose $E=\cup _{i=1}^{m}(\frac{g_{i}E}{n}+b_{i})\in \Lambda $ is a totally
disconnected self-similar set without complete overlaps. Then $E\in
\mathfrak{A}^{1/n}$ with characteristic $m,$ and thus $E$ is bilipschitz
equivalent to $\Sigma _{m}^{n^{-1}}.$
\end{thm}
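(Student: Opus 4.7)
The plan is to exhibit $E$ as the first set $K_{1}$ of some dust-like graph-directed system of ratio $1/n$ and integer characteristic $m$. Theorem 1 then immediately yields the bilipschitz equivalence $E\sim\Sigma_{m}^{n^{-1}}$, so the entire work lies in the construction of such a system.

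The first move is to use the finiteness of $\mathbb{G}$ to generate a finite vertex set. The orbit $\{g(E):g\in\mathbb{G}\}$ is a finite family $V_{1}=E,V_{2},\ldots,V_{N}$ of isometric copies of $E$; fix $h_{\alpha}\in\mathbb{G}$ with $V_{\alpha}=h_{\alpha}(E)$. Applying $h_{\alpha}$ to $E=\bigcup_{i=1}^{m}(g_{i}E/n+b_{i})$ yields
\[
V_{\alpha}\;=\;\bigcup_{i=1}^{m}\Bigl(\tfrac{V_{\beta(\alpha,i)}}{n}+h_{\alpha}(b_{i})\Bigr),\qquad h_{\alpha}g_{i}(E)=V_{\beta(\alpha,i)},
\]
which is the data of a directed graph on $\{1,\ldots,N\}$ in which every vertex has exactly $m$ outgoing edges and every edge carries a similitude of ratio $1/n$. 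Thus the adjacency matrix $A$ satisfies $A(1,\ldots,1)^{T}=m(1,\ldots,1)^{T}$, so that (\ref{assumption2}) holds with $v=(1,\ldots,1)^{T}$ and the characteristic is the integer $m$.

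The main obstacle is to upgrade this construction to a truly \emph{dust-like} system, since in the raw graph above the $m$ level-one pieces at $V_{\alpha}$ overlap precisely when the original $S_{i}(E)$ do, and strong separation is not assumed. To eliminate overlaps I would use both standing hypotheses in tandem. The absence of complete overlaps guarantees that distinct length-$k$ words yield distinct compositions $S_{i_{1}}\circ\cdots\circ S_{i_{k}}$, and hence genuinely distinct level-$k$ cylinders. Total disconnectedness of the compact set $E\subset\mathbb{R}^{l}$ forces, for every $\delta>0$, the $\delta$-chain partition of $E$ to consist of finitely many clopen pieces separated by at least $\delta$. Combined with the finite-type structure inherited from $|\mathbb{G}|<\infty$ and the discreteness of $\Gamma$ (so that only finitely many relative configurations of cylinders of a given scale can arise), these facts let one descend to a level $k$ at which the level-$k$ cylinders coalesce into finitely many "cluster shapes", each cluster containing all overlaps in which its cylinders are involved and being positively separated from the others. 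Promoting these cluster shapes (modulo translation and $\mathbb{G}$-action) to the refined vertex set gives a dust-like graph-directed system of ratio $1/n^{k}$ and integer characteristic $m^{k}$; inserting $k-1$ intermediate vertices along each composite edge to realise it as a chain of $1/n$-contractions then yields a dust-like system of ratio exactly $1/n$ whose characteristic is~$m$.

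Once this graph-directed representation is in hand, $E\in\mathfrak{A}_{m}^{1/n}$, and Theorem 1 produces the bilipschitz equivalence with $\Sigma_{m}^{n^{-1}}$. The delicate step is the dust-like construction in the overlapping case: it is the natural meeting point of finite $\mathbb{G}$, no complete overlaps, and total disconnectedness, and I expect the chief technical difficulty of the whole argument to lie there; the rest is a direct appeal to Theorem~1.
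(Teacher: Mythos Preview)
Your approach is essentially the paper's: the ``cluster shapes'' you describe are precisely the \emph{types} $D\subset\mathbb{G}\times\Gamma'$ introduced in Section~4, their finiteness and the dust-like separation come from total disconnectedness via Lemmas~\ref{l:there} and~\ref{l:finite}, and the integer characteristic is obtained from the eigenvector $v=(\#D_{1},\ldots,\#D_{p})^{T}$ satisfying $Av=m^{k^{\ast}}v$ (equation~(\ref{cha})), which is exactly where the absence of complete overlaps is invoked. One unnecessary complication in your sketch: rather than inserting $k-1$ intermediate vertices along each edge to force the ratio down to $1/n$ (a construction that would need some care to realise as a bona fide dust-like system in a single compact metric space), the paper simply stops at $E\in\mathfrak{A}_{m^{k^{\ast}}}^{1/n^{k^{\ast}}}$, applies Theorem~1 to get $E\sim\Sigma_{m^{k^{\ast}}}^{n^{-k^{\ast}}}$, and then uses Lemma~\ref{sym} to conclude $\Sigma_{m^{k^{\ast}}}^{n^{-k^{\ast}}}\sim\Sigma_{m}^{n^{-1}}$.
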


\vspace{-0.5cm} 
\begin{figure}[tbph]
\centering\includegraphics[width=0.55\textwidth]{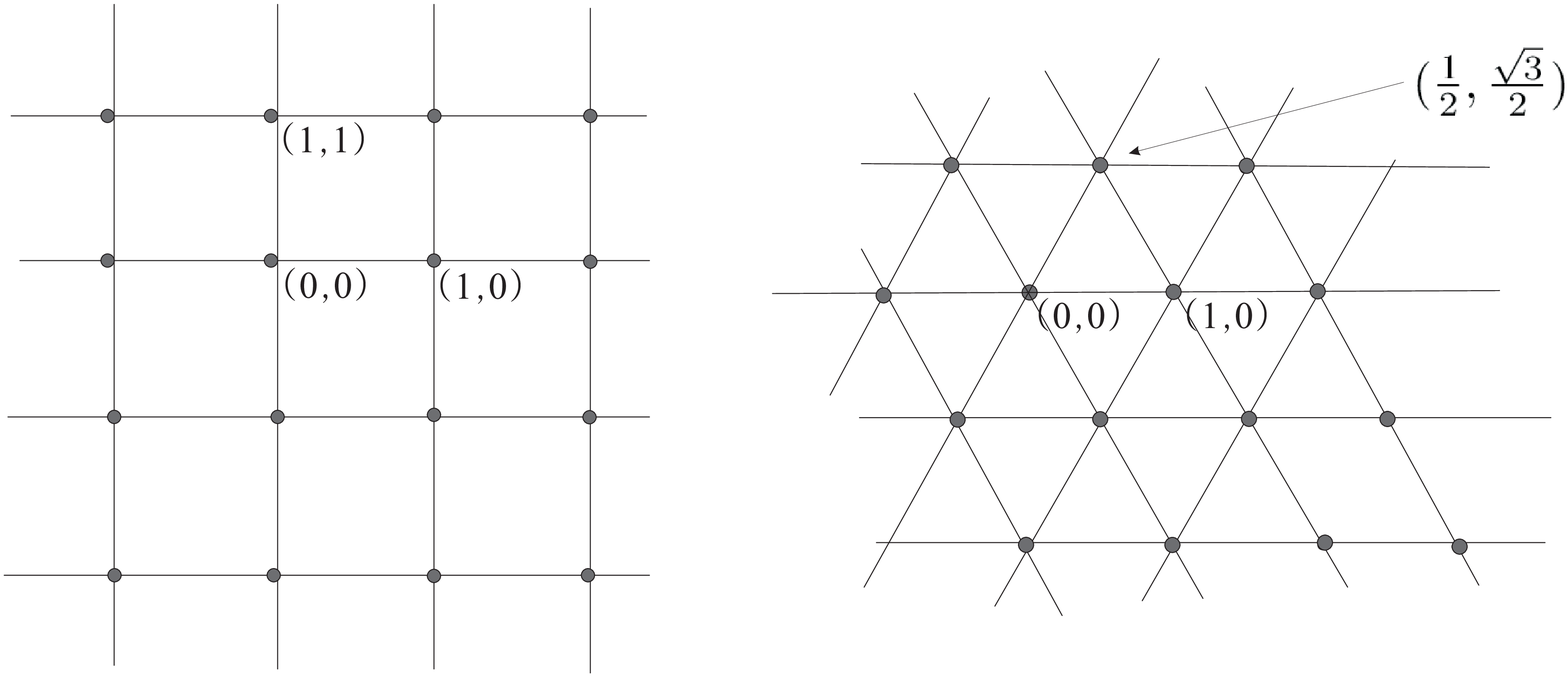} \vspace{-0.5cm}
\caption{Discrete groups on the plane}
\end{figure}
\vspace{-0.5cm}

\begin{example}
Let $n\geq 2,$ $\Gamma _{1}=\mathbb{Z}^{2}$ and $\Gamma _{2}=\{(\frac{a}{2},%
\frac{\sqrt{3}}{2}b):a,b\in \mathbb{Z}$ and $a\equiv b($mod2$)\},$ then $%
\Gamma _{i}$ are discrete additional groups as in Figure 1. The finite
isometric group with respect to $\Gamma _{2}$ has $12$ elements, including
rotations and reflections.
\end{example}

\vspace{-0.5cm}
\begin{figure}[tbph]
\centering\includegraphics[width=0.65\textwidth]{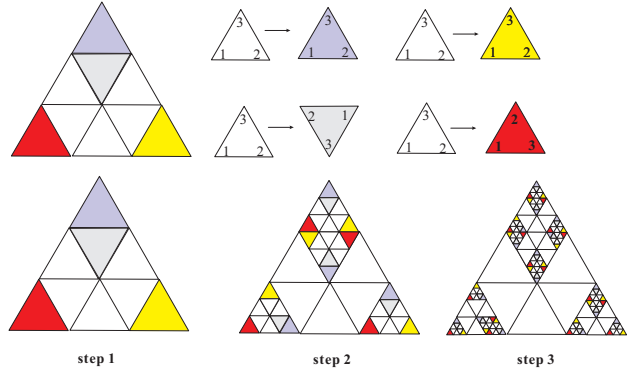} \vspace{-0.3cm}
\caption{Initial pattern and first three steps in construction}
\end{figure}
\vspace{-0.3cm}

\begin{example}
Let $n=3$ and $\Gamma =\Gamma _{2}$ as in Example 1. As in Figure 2, we take
4 small colored triangles and select the corresponding isometries. We also
show the first three steps in construction.
\end{example}

\begin{example}
Let $\Gamma =\mathbb{Z}^{l}$ with $l\geq 2$, then its isometric group
contains the element$:$
\begin{equation*}
g(x_{1},\cdots ,x_{l})=(s_{1}x_{\sigma (1)},s_{2}x_{\sigma (2)},\cdots
,s_{l}x_{\sigma (l)}),\text{ }
\end{equation*}%
where $\sigma $ is a permutation on $\{1,\cdots ,l\}$ and sign $s_{i}\in
\{-1,1\}$ for all $i.$ In this case, we notice that $\Lambda $ contains any
self-similar sets in $(\ref{total}).$
\end{example}

\begin{figure}[tbph]
\centering\includegraphics[width=0.9\textwidth]{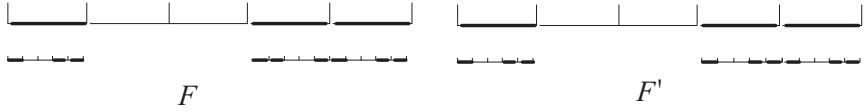} \vspace{-0.3cm}
\caption{Different structures in $F$ and $F^{\prime }$}
\end{figure}
\vspace{-0.3cm}

\begin{example}
For $n=5$ and $l=1,$ let
\begin{equation*}
F=\frac{F}{5}\cup \frac{-F+4}{5}\cup \frac{F+4}{5},\text{ }F^{\prime }=\frac{%
F^{\prime }}{5}\cup \frac{F^{\prime }+3}{5}\cup \frac{F^{\prime }+4}{5}.
\end{equation*}%
Due to the minus sign in the similarity $x\mapsto \frac{-x+4}{5},$ these two
self-similar sets are quite different (Figure 3). It is easy to show $F$ and
$F^{\prime }$ are totally disconnected self-similar sets without complete
overlaps. Then it follows from Theorem 4 that $F$ and $F^{\prime }$ are
bilipschitz equivalent.
\end{example}


We consider the self-similar set $E^{\ast }$ generated by
\begin{equation*}
S_{i}(x)=x/n+b_{i}\text{ with }b_{i}\in \mathbb{Q}^{l}\text{ for }i=1,\cdots
,m.
\end{equation*}
Let $\Gamma $ be the additional group generated by $\{b_{i}\}_{i}.$ Suppose $%
n>m,$ then $\dim _{H}E^{\ast }\leq \log m/\log n<1$ which implies $E^{\ast }$
is totally disconnected. Theorem 3 implies that if $E^{\ast }$ has no
complete overlaps, then $E^{\ast }$ is bilipschitz equivalent to $\Sigma
_{m.}^{1/n}$ Please see Example \ref{non} for self-similar set $E=\frac{E}{5}%
\cup (\frac{E}{5}+\frac{7}{10})\cup (\frac{E}{5}+\frac{8}{10}).$

\subsection{Algorithm in polynomial time to test complete overlaps in $%
\Lambda $}

$\ $

Given $E=\cup _{i=1}^{m}(\frac{g_{i}E}{n}+b_{i})\in \Lambda ,$ let $%
M=2\max_{1\leq i\leq m}|b_{i}|/(n-1).$

We will construct a directed graph with vertex set $\{x\in \Gamma :|x|\leq
M\}\times \mathbb{G}\times \mathbb{G}$. Set an edge from vertex $%
(x,g,g^{\prime })$ to vertex$\ (x_{1},g_{1},g_{1}^{\prime }),$ if and only
if there is a pair $(i,j)\in \{1,\cdots ,m\}^{2}$ such that $%
(x_{1},g_{1},g_{1}^{\prime })=(nx+g(b_{i})-g^{\prime
}(b_{j}),gg_{i},g^{\prime }g_{j}).$ We denote this edge by $(x,g,g^{\prime })%
\overset{(i,j)}{\rightarrow }(x_{1},g_{1},g_{1}^{\prime }).$ This is a
finite graph since $\Gamma $ is discrete and $\mathbb{G}$ is finite. We call
$(x,g,g^{\prime })$ an \emph{original vertex}, if $(x,g,g^{\prime
})=(b_{i}-b_{j},g_{i},g_{j})$ for some $i\neq j,$ and call $%
(x_{1},g_{1},g_{1}^{\prime })$ a \emph{boundary vertex}, if $x_{1}=0$ and $%
g_{1}=g_{1}^{\prime }.$ We have the following result.

\begin{proposition}
$E$ has complete overlaps if and only if there is a directed path starting
at an original vertex and ending at a boundary vertex.
\end{proposition}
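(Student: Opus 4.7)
The plan is to exhibit an explicit bijection between the data \emph{(directed path from an original vertex to a boundary vertex)} and the data \emph{(complete overlap relation)}, by encoding iterated composition of the similitudes $S_i(x)=g_i(x)/n+b_i$ as the transition rule on the graph. The starting point is the composition formula, proved by induction on $t$:
\begin{equation*}
S_{i_1}\circ\cdots\circ S_{i_t}(x)=\frac{g_{i_1}\cdots g_{i_t}(x)}{n^{t}}+\sum_{k=1}^{t}\frac{g_{i_1}\cdots g_{i_{k-1}}(b_{i_k})}{n^{k-1}}.
\end{equation*}
Comparing linear and translation parts, $S_{i_1}\cdots S_{i_t}\equiv S_{j_1}\cdots S_{j_{t'}}$ forces $t=t'$ (since $g\in\mathbb{G}$ is an isometry, the linear parts have norm $n^{-t}$ and $n^{-t'}$), and is then equivalent to $g_{i_1}\cdots g_{i_t}=g_{j_1}\cdots g_{j_t}$ together with
\begin{equation*}
\sum_{l=1}^{t}n^{t-l}\bigl[g_{i_1}\cdots g_{i_{l-1}}(b_{i_l})-g_{j_1}\cdots g_{j_{l-1}}(b_{j_l})\bigr]=0
\end{equation*}
(multiplying the translation identity by $n^{t-1}$).

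For the direction \emph{path implies complete overlap}, I take a path $v_0\to v_1\to\cdots\to v_T$ starting at an original vertex $v_0=(b_{i_1}-b_{j_1},g_{i_1},g_{j_1})$ with $i_1\ne j_1$, along successive edges labelled $(i_2,j_2),\ldots,(i_{T+1},j_{T+1})$. Setting $t=T+1$ and $d_l=g_{i_1}\cdots g_{i_{l-1}}(b_{i_l})-g_{j_1}\cdots g_{j_{l-1}}(b_{j_l})$, the transition rule gives by induction
\begin{equation*}
v_k=\Bigl(\sum_{l=1}^{k+1}n^{k+1-l}d_l,\;g_{i_1}\cdots g_{i_{k+1}},\;g_{j_1}\cdots g_{j_{k+1}}\Bigr).
\end{equation*}
The boundary condition at $v_T$ is exactly $\sum_{l=1}^{t}n^{t-l}d_l=0$ and $g_{i_1}\cdots g_{i_t}=g_{j_1}\cdots g_{j_t}$, which by the equivalence above is $S_{i_1}\cdots S_{i_t}\equiv S_{j_1}\cdots S_{j_t}$, a complete overlap (the two index sequences differ because $i_1\ne j_1$).

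For the converse, start with a complete overlap and first use injectivity of each $S_i$ to cancel the longest common initial segment, reducing to $i_1\ne j_1$ (if the two reduced sequences have length $1$, then $S_{i_1}=S_{j_1}$ forces $b_{i_1}=b_{j_1}$ and $g_{i_1}=g_{j_1}$, so the original vertex is already a boundary vertex and the trivial path suffices). Otherwise run the recursion $x_{k+1}=nx_k+g_{i_1}\cdots g_{i_{k+1}}(b_{i_{k+2}})-g_{j_1}\cdots g_{j_{k+1}}(b_{j_{k+2}})$ with $x_0=b_{i_1}-b_{j_1}$; the candidate path reaches $(0,g,g)$ at step $T=t-1$ by the computation above. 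The only non-obvious point is that every intermediate vertex lies in the restricted vertex set $\{|x|\le M\}$. Reading the recursion backward from $x_T=0$,
\begin{equation*}
|x_k|\le\frac{|x_{k+1}|+2\max_i|b_i|}{n}\le 2\max_i|b_i|\sum_{j=1}^{T-k}n^{-j}\le\frac{2\max_i|b_i|}{n-1}=M,
\end{equation*}
so the path does live in the finite graph as defined.

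The main obstacle is this a priori backward bound; the forward recursion $x_{k+1}=nx_k+O(1)$ generally forces $|x_k|$ to grow like $n^k$, and only the hypothesis that the path terminates at $x_T=0$ permits the geometric-series estimate that keeps the trajectory bounded by $M$. This is precisely why the vertex set can be chosen finite (so that the ``if and only if'' furnishes a polynomial-time test), and it is the one substantive ingredient beyond the composition formula and bookkeeping.
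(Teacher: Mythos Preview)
Your proof is correct and follows essentially the same route as the paper's: both encode the iterated composition via the recursion $x_{k+1}=nx_k+g_k(b_{i_{k+1}})-g'_k(b_{j_{k+1}})$ and obtain the crucial bound $|x_k|\le M$ by reading the recursion backward from $x_T=0$ as a geometric series. Your write-up is actually more complete than the paper's, which only details the direction ``overlap $\Rightarrow$ path'' and dismisses the converse with ``in the same way''; you also make explicit the reductions $t=t'$ and $i_1\ne j_1$ and the length-one edge case, which the paper silently assumes.
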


Applying Dijkstra's algorithm \cite{Dijk}, in polynomial time we can test
whether there is a such directed path.

When $\mathbb{G}=\{id\},$ we notice that we don't need the group part $%
\mathbb{G\times G}$\ in $\{x\in \Gamma :|x|\leq M\}\mathbb{\times G\times G}%
. $ Consider the graph $\{x\in \Gamma :|x|\leq M\},$ there is an edge from
vertex $x$ to vertex$\ x,$ if and only if there is a pair $(i,j)\in
\{1,\cdots ,m\}^{2}$ such that $x_{1}=nx+b_{i}-b_{j}.$ We denote the edge by
$x\overset{(i,j)}{\rightarrow }x_{1}.$

\begin{example}
\label{non}Let $5=n>m=3$ and $E=\frac{E}{5}\cup (\frac{E}{5}+\frac{7}{10}%
)\cup (\frac{E}{5}+\frac{8}{10}).$ Here
\begin{equation*}
n=5,\mathbb{G}=\{id\},\Gamma =\mathbb{Z}/10,M=0.4
\end{equation*}%
and $\{x\in \Gamma :|x|\leq M\}=\{-0.4,-0.3,-0.2,-0.1,0,0.1,0.2,0.3,0.4\}.$
The original vertices are $\{-0.1,0.1\}$ and the unique boundary vertex is $%
\{0\}.$ If $x\overset{(i,j)}{\rightarrow }0$ for some $x\in \{x\in \Gamma
:|x|\leq M\},$ then $0=nx+b_{i}-b_{j},$ that is%
\begin{equation*}
nx=b_{j}-b_{i}.
\end{equation*}%
However, $b_{j}-b_{i}$ belongs to $P=\{-0.8,-0.7,-0.1,0,0.1,0.7,0.8\}\ $%
satisfying $P\cap \{5x:x\in \Gamma $ and $|x|\leq M\}=\{0\}.$ Therefore, $%
x=0.$ This means there is no directed path from original vertices to
boundary vertices. Then $E$ is totally disconnected self-similar set without
complete overlaps and thus $E$ is bilipschitz equivalent to $\Sigma
_{3}^{1/5}$.
\end{example}

\medskip

The paper is organized as follows. Section 2 is the preliminaries, including
the techniques of number theory, non-negative matrix and bilipschitz
equivalence. Section 3 is the proofs of Theorems 1 and 2. Section 4 is
devoted to Theorem 3, in which total disconnectedness implies the
graph-directed structure and the loss of complete overlaps insures the
integer characteristic. The last section indicates some related open
questions.

\medskip

\section{Preliminaries}

\subsection{Combinatorial lemma from Frobenius coin problem}

$\ $

It is easy to check that $\{2a_{1}+5a_{2}:$ $a_{1},a_{2}\in \mathbb{N}\cup
\{0\}\}=\{n:n>3\}.$ In general, we have the following Frobenius coin problem
\cite{R} and Frobenius number $\phi (a_{1},\cdots ,a_{s}):$ Given natural
integers $a_{1},\cdots ,a_{s}$ $(s\geq 2)$ with their greatest common
divisor $\gcd (a_{1},\cdots ,a_{s})=1,$ then there exists an integer $\phi
(a_{1},\cdots ,a_{s})$ such that for each integer $n>\phi (a_{1},\cdots
,a_{s})$ the following equation
\begin{equation*}
a_{1}x_{1}+\cdots +a_{s}x_{s}=n
\end{equation*}%
have integer solution $x_{1}\geq 0,\cdots ,x_{s}\geq 0.$

\begin{lemma}
\label{com} Let $a_{1},a_{2},\cdots ,a_{s}$ $(s\geq 2)$ be positive integers
with $\gcd (a_{1},\cdots ,a_{s})=1.$ Suppose $b,c\in \mathbb{N}$ with
\begin{equation}
b>2s(\max\nolimits_{t}a_{t})(\max\nolimits_{t}a_{t}+\phi (a_{1},\cdots
,a_{s})).  \label{p}
\end{equation}%
and $l_{\omega }\in \{a_{1},\cdots ,a_{s}\}$ for every $\omega \ $in an
index set $\Omega $ satisfying
\begin{equation}
\sum\nolimits_{\omega \in \Omega }l_{\omega }=b\cdot c.  \label{omega}
\end{equation}%
There is a decomposition of $\Omega =\cup _{j=1}^{s}\Delta _{j}$ such that $%
\Delta _{j}\cap \Delta _{j^{\prime }}=\emptyset $ for $j\neq j^{\prime },$
and $l_{\omega }=a_{j}$ for any $\omega \in \Delta _{j}$ and $1\leq j\leq s.$
If for every $j,$
\begin{equation}
\#\Delta _{j}\geq (\max\nolimits_{t}a_{t}+\phi (a_{1},\cdots ,a_{s}))\cdot c,
\label{condi}
\end{equation}%
then there is a decomposition $\Omega =\bigcup_{t=1}^{c}\Omega _{t}$ such
that for every $1\leq t\leq c$,
\begin{equation}
\sum\nolimits_{\omega \in \Omega _{t}}l_{\omega }=b.
\end{equation}
\end{lemma}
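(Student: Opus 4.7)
The plan is to prove the lemma by induction on $c$. For $c = 1$, take $\Omega_1 = \Omega$, which works by hypothesis. For $c \geq 2$, assuming the lemma holds for $c - 1$, I would extract a single subset $\Omega_c \subseteq \Omega$ with $\sum_{\omega \in \Omega_c} l_\omega = b$ and $\#(\Delta_j \setminus \Omega_c) \geq P(c-1)$ for every $j$, where $M = \max_t a_t$, $\phi = \phi(a_1, \ldots, a_s)$, and $P = M + \phi$. The complement $\Omega \setminus \Omega_c$ then satisfies the hypothesis for $c - 1$: its total sum is $b(c-1)$ and each $\Delta_j \setminus \Omega_c$ has at least $P(c-1)$ elements, so the inductive hypothesis yields the remaining decomposition $\Omega_1, \ldots, \Omega_{c-1}$.

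The construction of $\Omega_c$ reduces to a number-theoretic question: find non-negative integers $y_j$ with $0 \leq y_j \leq U_j := \#\Delta_j - P(c-1)$ and $\sum_j a_j y_j = b$; then $\Omega_c$ is formed by selecting any $y_j$ elements from each $\Delta_j$. Note $U_j \geq P$ by hypothesis, and the computation $\sum_j a_j U_j = bc - P(c-1)\sum_j a_j \geq bc - (c-1)sMP \geq b$ (using $b > 2sMP$) shows $b$ is in the achievable range of $\sum_j a_j y_j$ over the box $\prod_j [0, U_j]$.

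To exhibit such $y_j$, I would start from an arbitrary Frobenius representation $b = \sum_j a_j z_j$ with $z_j \geq 0$, which exists because $b > \phi$. If some $z_j > U_j$, apply a \emph{swap}: choose $i \neq j$ with $z_i + a_j \leq U_i$ and update $z_j \mapsto z_j - a_i$, $z_i \mapsto z_i + a_j$, preserving $\sum_i a_i z_i = b$ while strictly decreasing the total violation $V := \sum_i \max(0, z_i - U_i)$ by at least $\min(a_i, z_j - U_j) \geq 1$. Iterating terminates at $V = 0$, giving the desired $y_j$. The crucial step is showing a valid swap always exists whenever $V > 0$: if not, then $z_i > U_i - a_j$ for every $i$, and summing yields $b = \sum_i a_i z_i \geq \sum_i a_i U_i - (a_j - 1)\sum_i a_i$; plugging in $\sum_i a_i U_i = bc - P(c-1)\sum_i a_i$ and $\sum_i a_i \leq sM$ forces $b \leq (P + M - 1)sM$, which is strictly less than $2sMP$ because $\phi \geq 0$ gives $P + M - 1 < 2P$. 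This contradicts the hypothesis $b > 2sMP$.

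The main obstacle is precisely this swap/termination argument: the specific constant $2sMP$ in the hypothesis on $b$ is tailored so that the above contradiction succeeds uniformly across all $c \geq 2$, including the tight case $c = 2$ where the available slack is smallest. Once the bounded representation $y_j$ is secured, the induction closes cleanly and the decomposition $\Omega = \bigcup_t \Omega_t$ is produced.
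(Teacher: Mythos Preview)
Your argument is correct, and it does share the induction-on-$c$ skeleton with the paper, but the heart of the inductive step is organized differently. The paper reserves a pool $\Delta_j'\subset\Delta_j$ of size $cP$ for each $j$, greedily accumulates a subset $\Theta$ from the complement $\Omega\setminus\bigcup_j\Delta_j'$ until its sum lands in the window $[\,b-\phi-M,\;b-\phi\,)$, and then invokes the Frobenius property \emph{once} to write the small remainder $b-\sum_{\omega\in\Theta}l_\omega\in(\phi,\phi+M]$ as $\sum_j a_j x_j$ with each $x_j\le P$; these $x_j$ elements are drawn from the reserved pools, so at least $(c-1)P$ remain in each $\Delta_j'$, closing the induction. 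Your route instead reduces directly to finding a \emph{bounded} representation $\sum_j a_j y_j=b$ with $y_j\le U_j=\#\Delta_j-P(c-1)$, starts from an unconstrained Frobenius representation, and reaches a bounded one by iterated swaps $z_j\mapsto z_j-a_i$, $z_i\mapsto z_i+a_j$, with a counting argument showing that a swap target $i$ is always available.

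Both arguments use the hypothesis $b>2sMP$ at exactly the tight spot: the paper needs it so that $\sum_{\omega\in\Omega^*}l_\omega\ge b$ (ensuring the greedy stage can reach the window), while you need it to derive the contradiction $b\le (P+M-1)sM$ when no swap exists. The paper's version is a bit more direct---one greedy pass plus one application of Frobenius, no iteration---whereas your exchange argument is cleaner to state and makes the bounded-representation problem explicit. One small remark: your claim ``$\phi\ge 0$ gives $P+M-1<2P$'' is fine under the paper's loose definition of $\phi$, but even if one takes the convention $\phi=-1$ when some $a_j=1$, the inequality becomes $P+M-1\le 2P$, and $b>2sMP\ge (P+M-1)sM$ still yields the contradiction.
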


\begin{proof}
We will prove this inductive assumption on the integer $c\geq 1$.

(1)\ This is trivial when $c=1$.

(2)\ Suppose the inductive assumption on $c$ is true for $c=1,\cdots ,(k-1)$.

Now, set $c=k\geq 2.$ Take $\Delta _{j}^{\prime }\subset \Delta _{j}$ such
that $\#\Delta _{j}^{\prime }=k(\max_{t}a_{t}+\phi (a_{1},\cdots ,a_{s})).$
Let $\Omega ^{\ast }=\Omega \backslash \left( \cup _{j=1}^{s}\Delta
_{j}^{\prime }\right) .$ Then by (\ref{p}) and (\ref{omega}), we have
\begin{eqnarray*}
&&\sum_{\omega \in \Omega ^{\ast }}l_{\omega }=\sum_{\omega \in \Omega
}l_{\omega }-\sum\nolimits_{j=1}^{s}\sum_{\omega \in \Delta _{j}^{\prime
}}l_{\omega } \\
&=&b\cdot k-k\sum\nolimits_{j=1}^{s}(a_{j})(\max\nolimits_{t}a_{t}+\phi
(a_{1},\cdots ,a_{s})) \\
&\geq &b\cdot k-k\cdot s(\max\nolimits_{t}a_{t})(\max\nolimits_{t}a_{t}+\phi
(a_{1},\cdots ,a_{s})) \\
&\geq &k[b-s(\max\nolimits_{t}a_{t})(\max\nolimits_{t}a_{t}+\phi
(a_{1},\cdots ,a_{s}))] \\
&\geq &k(b/2)\geq b.
\end{eqnarray*}%
Since
\begin{equation*}
\sum_{\omega \in \Omega ^{\ast }}l_{\omega }\geq b\text{ and }b-\phi
(a_{1},\cdots ,a_{s})>\max_{t}a_{t},
\end{equation*}%
we can select a \emph{maximal }subset $\Theta $ of $\Omega ^{\ast }$ such
that
\begin{equation*}
\sum\nolimits_{\omega \in \Theta }l_{\omega }<b-\phi (a_{1},\cdots ,a_{s}).
\end{equation*}
We conclude that
\begin{equation*}
\sum\nolimits_{\omega \in \Theta }l_{\omega }\geq b-\phi (a_{1},\cdots
,a_{s})-\max\nolimits_{t}a_{t}.
\end{equation*}%
Otherwise, we notice that $\Omega ^{\ast }\backslash \Theta $ is nonempty
since $\sum_{\omega \in \Omega ^{\ast }}l_{\omega }\geq b$ and $%
\sum\nolimits_{\omega \in \Theta }l_{\omega }<b,$ and thus we can take $%
i_{0}\in \Omega ^{\ast }\backslash \Theta $ which implies
\begin{eqnarray*}
\sum\nolimits_{\omega \in \Theta }l_{\omega }<\sum\nolimits_{\omega \in
\Theta \cup \{i_{0}\}}l_{\omega } &<&(b-\phi (a_{1},\cdots
,a_{s})-\max\nolimits_{t}a_{t})+\max\nolimits_{t}a_{t} \\
&=&b-\phi (a_{1},\cdots ,a_{s}),
\end{eqnarray*}%
which contradicts the maximality of $\Theta .$ Now,
\begin{equation}
\phi (a_{1},\cdots ,a_{s})<b-\sum\nolimits_{\omega \in \Theta }l_{\omega
}\leq \phi (a_{1},\cdots ,a_{s})+\max\nolimits_{t}a_{t}.  \label{phi}
\end{equation}%
Applying Frobenius number $\phi (a_{1},\cdots ,a_{s})$ to (\ref{phi}), we
can find non-negative integers $x_{1},\cdots ,x_{s}$ such that
\begin{equation}
a_{1}x_{1}+\cdots +a_{s}x_{s}=b-\sum\nolimits_{\omega \in \Theta }l_{\omega
},  \label{sum}
\end{equation}%
where $x_{j}\leq \phi (a_{1},\cdots ,a_{s})+\max_{t}a_{t}$ for all $j.$ Take
$\Delta _{j}^{\ast }\subset \Delta _{j}^{\prime }$ with $\#\Delta _{j}^{\ast
}=x_{j}$ for every $j.$ Let $\Omega _{1}=\Theta \cup \left( \cup
_{j=1}^{s}\Delta _{j}^{\ast }\right) $. By (\ref{sum}), we have
\begin{equation*}
\sum\nolimits_{\omega \in \Omega _{1}}l_{\omega }=b.
\end{equation*}%
Let $\Omega ^{\prime }=\Omega \backslash \Omega _{1},$ then
\begin{equation*}
\sum\nolimits_{\omega \in \Omega ^{\prime }}l_{\omega }=b(k-1)
\end{equation*}%
and for $j=1,\cdots ,s,$ we have%
\begin{eqnarray*}
\#(\Delta _{j}^{\prime }\backslash \Delta _{j}^{\ast })
&=&(\max\nolimits_{t}a_{t}+\phi (a_{1},\cdots ,a_{s}))k-x_{j} \\
&\geq &(\max\nolimits_{t}a_{t}+\phi (a_{1},\cdots ,a_{s}))(k-1).
\end{eqnarray*}%
For $\Omega ^{\prime },$ using the inductive assumption on $c=k-1,$ we get
this lemma.
\end{proof}

\subsection{Connected block of directed Graph\label{order of block}}

$\ $

Given a directed graph $G,$ we give the equivalent relation on the vertex
set, let $i\sim i$ and for vertices $i$ and $j\ $with $i\neq j,$%
\begin{equation*}
i\sim j\text{ iff there are direct pathes from }i\text{ to }j\text{ and from
}j\text{ to }i\text{ respectively.}
\end{equation*}%
We call the equivalence class $[i]=\{j:j\sim i\}$ a \emph{connected block}.

Define a \emph{partial order} among connected block as follows%
\begin{equation*}
\lbrack j]\prec \lbrack j^{\prime }]\text{ iff there is a directed path from
a vertex in }[j]\text{ to a vertex in }[j^{\prime }].
\end{equation*}%
Under this partial order, we take the maximal elements from the finite
connected blocks and denote%
\begin{equation*}
\mathfrak{B}_{0}=\{[i]:[i]\text{ is maximal}\}.
\end{equation*}%
Inductively, let
\begin{equation*}
\mathfrak{B}_{k}=\{[i]:[i]\text{ is maximal in the complement of }\cup
_{q\leq k-1}\mathfrak{B}_{q}\}.
\end{equation*}%
We say that a vertex has rank $k,$ if this vertex belongs to a connected
block in $\mathfrak{B}_{k}.$ By the definition of $\mathfrak{B}_{k},$ we have

\begin{lemma}
\label{lemma:order}Given a vertex $i$ of rank $k,$ then any directed path
starting at this vertex will end at some vertex of rank $q$ with $q\leq k-1\
$or a vertex $j$ of rank $k$ with $i\sim j.$
\end{lemma}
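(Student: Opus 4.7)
The plan is to unpack the inductive definition of rank against the partial order $\prec$ and dispose of the lemma by a simple dichotomy on the terminal vertex of the path. Fix $i$ of rank $k$, so $[i]\in\mathfrak{B}_k$, and let $j$ be the end of some directed path starting at $i$. I would split into the cases $[j]=[i]$ and $[j]\neq [i]$; these are exclusive and exhaustive, and they correspond precisely to the two alternatives in the conclusion.

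In the first case there is nothing to do: every vertex of $[i]$ has rank $k$, so $j$ has rank $k$, and $j\in[i]$ means $i\sim j$. In the second case I would first observe that the path $i\to j$ gives $[i]\prec[j]$ in the partial order, and that this inequality must be \emph{strict}: if there were also a directed path from a vertex of $[j]$ back to a vertex of $[i]$, then composing with the path $i\to j$ would produce directed paths in both directions between $i$ and $j$, forcing $i\sim j$ and hence $[j]=[i]$, a contradiction. Next I would invoke the definition of $\mathfrak{B}_k$: the block $[i]$ is maximal inside the complement $\mathfrak{C}_k:=\{\text{all blocks}\}\setminus\bigcup_{q\leq k-1}\mathfrak{B}_q$. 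If $[j]$ also lay in $\mathfrak{C}_k$, then $[j]$ would be strictly above $[i]$ inside $\mathfrak{C}_k$, contradicting the maximality of $[i]$. Therefore $[j]\in\bigcup_{q\leq k-1}\mathfrak{B}_q$, so $j$ has rank $q$ for some $q\leq k-1$.

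The only step demanding a moment's attention is the strictness of $[i]\prec[j]$, which relies on the antisymmetry of $\prec$ built into the definition of the equivalence relation $\sim$; once that is in hand, the conclusion is a direct reading of the maximality clause used to define $\mathfrak{B}_k$. I do not expect any genuine obstacle, since the lemma is essentially a rephrasing of how the ranks were constructed, and no induction on $k$ is actually needed.
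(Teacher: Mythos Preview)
Your proposal is correct and is exactly the unpacking the paper has in mind: the paper gives no formal proof at all, simply prefacing the lemma with ``By the definition of $\mathfrak{B}_{k}$, we have'' and treating it as immediate. Your dichotomy on $[j]=[i]$ versus $[j]\neq[i]$, together with the observation that $[i]\prec[j]$ strictly would contradict the maximality of $[i]$ in the complement of $\bigcup_{q\leq k-1}\mathfrak{B}_q$, is precisely the intended reading of that definition.
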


\subsection{Integer characteristic}

$\ $

\begin{lemma}
Assumptions $(\ref{assumption2})$ and $(\ref{assumption1})$ on integer
characteristic are equivalent.
\end{lemma}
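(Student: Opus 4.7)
The plan is to treat the two directions separately. The forward implication $(\ref{assumption1}) \Rightarrow (\ref{assumption2})$ is essentially one line: the dust-like hypothesis makes the union in (\ref{MWW}) disjoint and each $S_e$ scales $\mathcal{H}^s$ by $r^s$, so $v_i := \mathcal{H}^s(K_i)$ satisfies $v_i = \sum_j a_{i,j} r^s v_j$, i.e.\ $Av = r^{-s} v = m v$ since $r^s = 1/m$, and $v > 0$ by (\ref{assumption1}).

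For the converse, suppose $Av = mv$ with $v > 0$ and integer $m \geq 2$. The two structural inputs I will use are the $n$-th level disjoint decomposition
$$K_i = \bigsqcup_j \bigsqcup_{\sigma \in \mathcal{E}^n_{i,j}} S_\sigma(K_j), \qquad \#\mathcal{E}^n_{i,j} = (A^n)_{i,j},$$
obtained by iterating (\ref{MWW}) and repeatedly invoking dust-likeness, and the identity $A^n v = m^n v$ combined with the scaling $r^{ns} m^n = 1$. For the upper bound, the natural $r^n$-cover gives
$$\mathcal{H}^s_{r^n \mathrm{diam}(X)}(K_i) \leq \sum_j (A^n)_{i,j}\,(r^n \mathrm{diam}(X))^s \leq \frac{(\mathrm{diam}(X))^s}{\min_k v_k}\, v_i,$$
where the second inequality comes from $\sum_j (A^n)_{i,j} \leq (\min_k v_k)^{-1} (A^n v)_i = m^n v_i/\min_k v_k$. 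Letting $n \to \infty$ gives $\mathcal{H}^s(K_i) < \infty$.

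For the lower bound I would apply the mass distribution principle. On each $K_i$ define a Borel measure $\mu_i$ of total mass $v_i$ by assigning mass $m^{-n} v_j$ to every cylinder $S_\sigma(K_j)$ with $\sigma \in \mathcal{E}^n_{i,j}$; consistency across levels is precisely the equation $A v = m v$. The dust-like hypothesis says that within each $K_i$ the finitely many top-level pieces $S_e(K_j)$ are pairwise disjoint compact sets and hence separated by a common positive distance; pushing this through the similitudes $S_\sigma$ produces a uniform constant $N$ such that every ball of radius $\rho \in [r^n \mathrm{diam}(X),\, r^{n-1} \mathrm{diam}(X)]$ meets at most $N$ level-$n$ cylinders. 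Each such cylinder has $\mu_i$-mass $m^{-n} v_j = O(\rho^s)$, so $\mu_i(B(x,\rho)) \leq C \rho^s$ and $\mathcal{H}^s(K_i) \geq v_i/C > 0$. The main obstacle is precisely this last uniform ball-mass bound: promoting the first-level separation to a scale-invariant separation at every level is routine but essential, and relies on the finiteness of the graph together with the dust-like assumption.
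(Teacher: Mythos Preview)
Your forward implication and the upper bound $\mathcal{H}^s(K_i)<\infty$ coincide with the paper's argument almost verbatim: both use the $n$-th level disjoint decomposition together with $A^n v = m^n v$ to control the cardinality of the natural $r^n$-cover.

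The genuine divergence is in the lower bound $\mathcal{H}^s(K_i)>0$. The paper does \emph{not} run a mass distribution argument. Instead it stratifies the directed graph into connected blocks by rank (Subsection~\ref{order of block}): for a rank-$0$ block the adjacency submatrix is irreducible with Perron--Frobenius eigenvalue $m$, so the corresponding subfamily is a transitive dust-like graph-directed system and positivity (indeed $0<\mathcal{H}^s<\infty$) is imported from Corollary~3.5 of~\cite{Fal97}; positivity is then pushed up to higher-rank vertices via the one-line observation that an edge $j_1\to j_2$ forces $S_e(K_{j_2})\subset K_{j_1}$, hence $\mathcal{H}^s(K_{j_2})>0\Rightarrow\mathcal{H}^s(K_{j_1})>0$. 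Your route---building the self-consistent mass distribution $\mu_i$ from the eigenvector and invoking the uniform first-level separation to bound $\mu_i(B(x,\rho))$---is correct and entirely self-contained, avoiding both the block/rank machinery and the external citation. The paper's approach, on the other hand, reuses the rank stratification that is needed anyway for the inductive proof of Theorem~1, so for them it comes essentially for free; it also sidesteps the (routine but non-trivial) verification that the level-one gap propagates to a scale-invariant bounded-multiplicity statement, which you correctly flag as the only point needing care in your argument.
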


\begin{proof}
If assumption (\ref{assumption1}) holds, then by the definition of adjacent
matrix, we have%
\begin{equation*}
\mathcal{H}^{s}(K_{i})=r^{s}\sum\nolimits_{j}a_{ij}\mathcal{H}^{s}(K_{j})%
\text{ for every }i,
\end{equation*}%
where $s=-\log m/\log r$. Therefore, $r^{s}=1/m,$ i.e.,
\begin{equation*}
Av=mv
\end{equation*}%
with $v=(\mathcal{H}^{s}(K_{1}),\cdots ,\mathcal{H}^{s}(K_{p}))^{T}>0.$

\medskip

If assumptions (\ref{assumption2}) holds, at first we will shows
\begin{equation*}
\mathcal{H}^{s}(K_{i})>0\text{ for any }i.
\end{equation*}%
If $i$ is a vertex of rank $0$ defined above$,$ then $i$ belongs to a
connected block with its adjacent matrix $B_{k_{2}\times k_{2}}$ is
irreducible. By a permutation on the vertices $\{1,\cdots ,p\},$ we can
assume $Av=mv$ for $v=(v_{1},\cdots ,v_{p})>0,$ where $A$ is a upper block
triangular matrix in the form
\begin{equation*}
A=\left(
\begin{array}{cc}
C_{k_{1}\times k_{1}} & D_{k_{1}\times k_{2}} \\
O & B_{k_{2}\times k_{2}}%
\end{array}%
\right) \text{ with }k_{1}+k_{2}=p.
\end{equation*}%
Then for irreducible matrix $B_{k_{2}\times k_{2}}$,%
\begin{equation*}
B(v_{k_{1}+1},\cdots ,v_{p})^{T}=m(v_{k_{1}+1},\cdots ,v_{p})^{T}\text{ with
}(v_{k_{1}+1},\cdots ,v_{p})^{T}>0.
\end{equation*}%
By Perron-Frobenius Theorem on irreducible matrix, we have that $m$ is the
spectrum radius of $B.$ Using Corollary 3.5 of \cite{Fal97} on dust-like
graph-directed sets with respect to the transitive graph, for the block $[i]$
of rank $0,$ we have%
\begin{equation*}
0<\mathcal{H}^{s}(K_{j})<\infty \text{ for any }j\sim i\ \text{with }s=-\log
m/\log r.
\end{equation*}%
Suppose there is an edge $e$ from $j_{1}$ to $j_{2},$ then $%
S_{e}(K_{j_{2}})\subset (K_{j_{1}}),$ which implies
\begin{equation}
\mathcal{H}^{s}(K_{j_{2}})>0\Rightarrow \mathcal{H}^{s}(K_{j_{1}})>0\text{
if there is an edge from }j_{1}\text{ to }j_{2}.  \label{imply}
\end{equation}%
Inductively, we assume $\mathcal{H}^{s}(K_{j})>0$ for vertex of rank $k.$
Now, let $i$ be a vertex of rank $k+1.$ For connected block $[i],$ there
exist a vertex $i_{0}$ in $[i]$ and a vertex $j$ of rank less than or equal
to $k$ such that there is a path from $i$ to $i_{0}$ and a edge from $i_{0}$
to $j.$ Therefore, it follows from (\ref{imply}) and the inductive
assumption that $\mathcal{H}^{s}(K_{i})>0$ for vertex of rank $k+1.$

\medskip

Now, we will prove that $\mathcal{H}^{s}(K_{j})<\infty $ for any vertex $j$.

Take $v>0$ such that $v>(1,\cdots 1)^{T}$ and $Av=mv.$ Let $%
A^{k}=(a_{ij}^{(k)})_{i,j}.$ Then
\begin{equation*}
\sum\nolimits_{i,j}a_{ij}^{(k)}=(1,\cdots 1)A^{k}(1,\cdots 1)^{T}\leq
(1,\cdots 1)A^{k}v\leq ((1,\cdots 1)v)\cdot m^{k}.
\end{equation*}%
For any $k,$ then $K_{i}=\bigcup\nolimits_{j}\bigcup\nolimits_{e^{\ast }\in
\mathcal{E}_{i,j}^{k}}S_{e^{\ast }}(K_{j}),$ where $\mathcal{E}_{i,j}^{k}$
is the collection of all paths of length $k$ starting at $i$ and ending at $%
j. $ Therefore, for each $i,$ we have%
\begin{eqnarray*}
\mathcal{H}^{s}(K_{i}) &\leq &\lim_{k\rightarrow \infty }\left(
\sum\nolimits_{j}a_{ij}^{(k)}\right) \cdot (r^{k})^{s}\max_{j}|K_{j}| \\
&\leq &\lim_{k\rightarrow \infty }\left(
\sum\nolimits_{i,j}a_{ij}^{(k)}\right) \cdot (r^{k})^{s}\max_{j}|K_{j}| \\
&\leq &((1,\cdots 1)v)\cdot \max_{j}|K_{j}|<\infty .
\end{eqnarray*}%
where $|K_{j}|$ is the diameter of $K_{j}$.

Hence assumption (\ref{assumption1}) holds.
\end{proof}

\subsection{Non-negative matrix}

$\ $

A non-negative matrix $A$ is said to be primitive, if $A^{h}>0$ for some
natural integer $h.$ We say a matrix $B$ is \emph{irreducible}, if $B$
cannot be conjugated into block upper triangular form by a permutation
matrix $P$:%
\begin{equation*}
PBP^{-1}\neq \left(
\begin{array}{cc}
D_{1} & D_{2} \\
0 & D_{3}%
\end{array}%
\right) .
\end{equation*}%
The following lemma can be found in \cite{BP}.

\begin{lemma}
\label{spec}Let $B_{n\times n}$ be a non-negative irreducible matrix, and $%
\rho (B)>0$ its Perron-Frobenius eigenvalue. For any positive vector $%
x=(x_{1},\cdots ,x_{n})^{T}>0,$ we have $\rho (B)\leq \max_{i}\frac{(Bx)_{i}%
}{x_{i}}.$ Here $\rho (B)=\max_{i}\frac{(Bx)_{i}}{x_{i}}$ if and only if $%
Bx=\rho (B)x.$
\end{lemma}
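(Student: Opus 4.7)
The plan is to deduce the lemma from the existence of a positive left Perron--Frobenius eigenvector, a standard output of the irreducible Perron--Frobenius theorem that we may invoke freely here. Writing $\rho = \rho(B)$, let $y = (y_1,\dots,y_n)^T > 0$ be a left eigenvector, i.e.\ $y^T B = \rho\, y^T$. Set $\lambda = \max_i (Bx)_i / x_i$, so that componentwise $Bx \le \lambda x$.

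The first step is the inequality. Pairing with $y$ on the left gives
\begin{equation*}
\rho\, (y^T x) \;=\; y^T(Bx) \;\le\; y^T(\lambda x) \;=\; \lambda\, (y^T x),
\end{equation*}
and since $y > 0$ and $x > 0$ force $y^T x > 0$, we may divide to conclude $\rho \le \lambda$, which is the required bound $\rho(B) \le \max_i (Bx)_i/x_i$.

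For the equality clause, the ``if'' direction is trivial: if $Bx = \rho x$, then $(Bx)_i / x_i = \rho$ for every $i$, hence the maximum equals $\rho$. For the ``only if'' direction, suppose $\rho = \lambda$. Then the inequality above is an equality, so $y^T(\lambda x - Bx) = 0$. The vector $\lambda x - Bx$ is nonnegative by definition of $\lambda$, and $y$ is strictly positive, so this forces $\lambda x - Bx = 0$ coordinate by coordinate, i.e.\ $Bx = \rho x$.

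The only non-trivial input is the existence of the strictly positive left eigenvector $y$ for the irreducible matrix $B$ with eigenvalue equal to the spectral radius; this is exactly the content of the Perron--Frobenius theorem for irreducible nonnegative matrices, which is the ``main obstacle'' in the sense that it carries all the substantive content, but it is the hypothesis we are allowed to take from \cite{BP}. Given that, the proof is the short pairing argument above, and no further case analysis (such as primitivity versus mere irreducibility) is needed because the left eigenvector is already positive on the whole index set.
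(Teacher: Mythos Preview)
Your argument is correct: the pairing with a strictly positive left Perron--Frobenius eigenvector cleanly gives both the inequality and the equality characterization. Note that the paper does not actually prove this lemma; it simply records it as a known fact from \cite{BP}, so there is no ``paper's own proof'' to compare against. Your write-up supplies exactly the standard justification one would find in that reference.
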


For any irreducible matrix $B,$ Corollary 3.2.3B of \cite{LL} indicates
that: If a non-negative matrix $B$ is irreducible, then there is a
permutation matrix $P$ and a natural integer $d$ such that
\begin{equation*}
PB^{d}P^{-1}=diag(B_{1},\cdots ,B_{d}),
\end{equation*}%
where each $B_{i}$ is primitive. Therefore, for each $B_{i}$ there is a
positive integer $h_{i}$ such that $(B_{i})^{h_{i}}>0.$ Take $%
u=d\prod\nolimits_{i=1}^{d}h_{i},$ we have

\begin{lemma}
\label{irreducibe} If a non-negative matrix $B$ is irreducible, then there
is a permutation matrix $P$ and a natural integer $u$ such that
\begin{equation*}
PB^{u}P^{-1}=diag(D_{1},\cdots ,D_{d}),
\end{equation*}%
where $D_{1},\cdots ,D_{d}>0$ are positive square matrices.
\end{lemma}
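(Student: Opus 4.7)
The plan is essentially to combine the cited decomposition result from \cite{LL} with the primitive-matrix characterization of positivity, which the paragraph preceding the lemma already hints at. First I would invoke Corollary 3.2.3B of \cite{LL}: since $B$ is a non-negative irreducible matrix, there exist a permutation matrix $P$ and an integer $d\geq 1$ such that
\begin{equation*}
P B^{d} P^{-1} = \mathrm{diag}(B_{1},\cdots ,B_{d}),
\end{equation*}
with each $B_{i}$ primitive. This reduces the problem from the irreducible case to the primitive case on each diagonal block.

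Next I would use the defining property of primitivity: for each $i$ there is a positive integer $h_{i}$ with $B_{i}^{h_{i}}>0$ (strictly positive entrywise). The trick is to find a single exponent that works simultaneously for all blocks and that is also divisible by $d$ so that raising $B$ to that power is compatible with the block-diagonal form. Setting $u = d\prod_{i=1}^{d} h_{i}$ achieves both: $u$ is a multiple of $d$, and $u/d$ is a multiple of each $h_{i}$.

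With this choice I would then compute
\begin{equation*}
P B^{u} P^{-1} = \bigl( P B^{d} P^{-1} \bigr)^{u/d} = \mathrm{diag}\bigl(B_{1}^{u/d},\cdots ,B_{d}^{u/d}\bigr),
\end{equation*}
and observe that for each $i$ the matrix $B_{i}^{u/d} = (B_{i}^{h_{i}})^{u/(dh_{i})}$ is a positive-integer power of the strictly positive matrix $B_{i}^{h_{i}}$; a product of strictly positive square matrices is strictly positive, so $D_{i} := B_{i}^{u/d} > 0$. This produces the desired decomposition with the same permutation matrix $P$.

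There is no real obstacle here: the substantive work is all in the cited Corollary 3.2.3B of \cite{LL} and in the standard fact that products of entrywise-positive square matrices remain entrywise positive. The only thing that requires a moment of care is the choice of $u$, which must simultaneously be a multiple of $d$ (so the block-diagonal structure of $B^{d}$ is preserved) and a multiple of each $h_{i}$ (so each diagonal block becomes strictly positive); the product $u = d\prod h_{i}$ handles both at once.
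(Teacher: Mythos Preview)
Your proposal is correct and follows essentially the same argument as the paper: invoke Corollary~3.2.3B of \cite{LL} to get $PB^{d}P^{-1}=\mathrm{diag}(B_{1},\dots,B_{d})$ with each $B_{i}$ primitive, choose $h_{i}$ with $B_{i}^{h_{i}}>0$, and take $u=d\prod_{i=1}^{d}h_{i}$. You have simply made explicit the verification that $D_{i}=B_{i}^{u/d}>0$, which the paper leaves to the reader.
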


\subsection{Bilipschitz equivalence}

$\ $

We say that a bijection $f$ form a metric space $(B,d_{B})$ to another $%
(B^{\prime },d_{B^{\prime }})$ is bilipschitz mapping with bilipschitz
constant $blip(f)\geq 1,$ if
\begin{equation}
blip(f)=\inf \{c\geq 1:c^{-1}\leq \frac{\text{d}_{B^{\prime }}(f(x),f(y))}{%
\text{d}_{B}(x,y)}\leq c\text{ for all }x\neq y\}
\end{equation}

The next lemma follows from~\cite{CooPi88,FalMa92} or Proposition~11.8 of
\cite{DavSe97}.

\begin{lemma}
\label{sym} $\Sigma _{n_{1}}^{r_{1}}$ and $\Sigma _{n_{2}}^{r_{2}}$ are
bilipschitz equivalent if and only if there are $k_{1},\,k_{2}\in \mathbb{N}$
such that $n_{1}^{k_{1}}=n_{2}^{k_{2}}$ and $r_{1}^{k_{1}}=r_{2}^{k_{2}}$.
\end{lemma}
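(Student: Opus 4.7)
For the ``if'' direction, assume $n_1^{k_1}=n_2^{k_2}=:N$ and $r_1^{k_1}=r_2^{k_2}=:\rho$ and exhibit an explicit bilipschitz map through the common symbol space $\Sigma_N^\rho$. For any integers $n,k\ge 2$ and $r\in(0,1)$ I would introduce the \emph{block-recoding} map $\Phi_{n,r,k}:\Sigma_n^r\to\Sigma_{n^k}^{r^k}$ which reads the input sequence in consecutive blocks of length $k$ and treats each block as one letter from an alphabet of size $n^k$. If $i=\min\{j:x_j\ne y_j\}$ in $\Sigma_n^r$, then the first disagreement of $\Phi_{n,r,k}(x)$ and $\Phi_{n,r,k}(y)$ is at index $\lfloor i/k\rfloor$, whence the ultrametric formula gives
\begin{equation*}
d(x,y)\;\le\;d\bigl(\Phi_{n,r,k}(x),\Phi_{n,r,k}(y)\bigr)\;=\;r^{k\lfloor i/k\rfloor}\;\le\;r^{-(k-1)}\,d(x,y),
\end{equation*}
so $\Phi_{n,r,k}$ is bilipschitz with constant $r^{-(k-1)}$. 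Applied with $(n_1,r_1,k_1)$ and $(n_2,r_2,k_2)$ the two targets both coincide with $\Sigma_N^\rho$, and the composition $\Phi_{n_2,r_2,k_2}^{-1}\circ\Phi_{n_1,r_1,k_1}$ is the desired equivalence.

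For the ``only if'' direction, let $f:\Sigma_{n_1}^{r_1}\to\Sigma_{n_2}^{r_2}$ be bilipschitz with constant $C$. Since Hausdorff dimension is a bilipschitz invariant and $\Sigma_n^r$ is dust-like with $\hdim\Sigma_n^r=\log n/\log(1/r)$,
\begin{equation*}
\frac{\log n_1}{\log(1/r_1)}=\frac{\log n_2}{\log(1/r_2)}=:s,
\end{equation*}
equivalently $\beta:=\log n_1/\log n_2=\log r_1/\log r_2$. Once $\beta\in\mathbb Q$ is known, writing $\beta=k_2/k_1$ in lowest terms delivers $n_1^{k_1}=n_2^{k_2}$ and $r_1^{k_1}=r_2^{k_2}$ with the \emph{same} pair $(k_1,k_2)$, which is the conclusion of the lemma.

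The remaining and genuinely nontrivial step is to prove $\beta\in\mathbb Q$. I would exploit the ultrametric/clopen structure: in $\Sigma_n^r$ every set of diameter $\le r^k$ lies in a single level-$k$ cylinder, and every clopen subset is a finite disjoint union of cylinders. The $n_1^k$ level-$k$ cylinders of $\Sigma_{n_1}^{r_1}$ are therefore sent by $f$ to mutually disjoint clopen pieces of $\Sigma_{n_2}^{r_2}$, each trapped inside a single level-$\ell(k)$ cylinder with $\ell(k)=\lfloor k\beta+\log C/\log r_2\rfloor$; running the same argument for $f^{-1}$ yields a reciprocal nesting with a bounded shift. Comparing the cylinder cardinalities $n_1^k$ and $n_2^{\ell(k)}$ together with the induced pushforward Hausdorff $s$-measures across these two hierarchies, a pigeonhole argument bounds the number of residues of $k\beta\bmod 1$ that can appear as $k\to\infty$, which is possible only when $\beta\in\mathbb Q$. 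This arithmetic step is the principal obstacle: dimension alone gives only one symmetric linear relation between $(\log n_i,\log r_i)$, and upgrading it to commensurability requires the finer cylinder-tree combinatorics of symbolic Cantor spaces. Alternatively one may invoke Proposition~11.8 of David-Semmes, or the Cooper-Pignataro and Falconer-Marsh papers cited in the excerpt, which package exactly this rigidity.
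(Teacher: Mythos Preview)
The paper does not prove this lemma at all; it simply records that the statement ``follows from~\cite{CooPi88,FalMa92} or Proposition~11.8 of~\cite{DavSe97}.'' Your proposal ends at the same place---invoking exactly those references for the hard rationality step---so it is consistent with the paper's approach, and in fact goes further by supplying a correct explicit block-recoding argument for the ``if'' direction and a correct outline of why the ``only if'' direction reduces to showing $\log n_1/\log n_2\in\mathbb{Q}$.
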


Theorem 2.1 of~\cite{RaRuX06} yields the following lemma.

\begin{lemma}
\label{graph} Suppose $\left\{ K_{i}\right\} _{i=1}^{p}$ and $\left\{
K_{i}^{\prime }\right\} _{i=1}^{p}$ are dust-like graph-directed sets on the
same graph $G$ satisfying $(\ref{MWW})$ and $K_{i}^{\prime
}=\bigcup_{j}\bigcup_{e\in \mathcal{E}_{i,j}}S_{e}^{\prime }(K_{j}^{\prime
}) $, where $S_{e}^{\prime }:(X^{\prime },$d$^{\prime })\rightarrow
(X^{\prime },$d$^{\prime })$ for some compact metric space $(X^{\prime },$d$%
^{\prime })$. If for each edge $e$ the corresponding similarities $S_{e}$
and $S_{e}^{\prime }$ have the same ratio $\rho _{e}$, then $K_{i}$ and $%
K_{i}^{\prime }$ are bilipschitz equivalent for each $i$.
\end{lemma}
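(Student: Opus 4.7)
The plan is to build an explicit bijection $f_i\colon K_i\to K_i'$ from the common symbolic dynamics on $G$, and then verify the bilipschitz property by comparing, in each system, the diameter of a ``cylinder'' with the minimum gap between its immediate sub-cylinders.

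First I would introduce, for each vertex $i$, the space $\Omega_i$ of infinite directed paths $\omega=e_1e_2\cdots$ in $G$ beginning at $i$, and write $j_n(\omega)$ for the $n$-th vertex visited. Because the union in $(\ref{MWW})$ is disjoint and $X,X'$ are compact, iterating $(\ref{MWW})$ produces a well-defined coding bijection
\[
\pi_i\colon\Omega_i\to K_i,\qquad \{\pi_i(\omega)\}=\bigcap_{n\ge 1} S_{e_1}\circ\cdots\circ S_{e_n}(K_{j_n(\omega)}),
\]
and analogously $\pi_i'\colon\Omega_i\to K_i'$; setting $f_i:=\pi_i'\circ\pi_i^{-1}$ then gives a canonical bijection $K_i\to K_i'$.

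Next I would set up the quantitative comparison. For each vertex $j$, let $D_j:=|K_j|$ and
\[
\delta_j:=\min\bigl\{\dist\bigl(S_e(K_{j(e)}),\,S_{e'}(K_{j(e')})\bigr):e\neq e'\text{ start at }j\bigr\},
\]
and similarly $D_j',\delta_j'$ for the primed system. The dust-like hypothesis combined with compactness makes $\delta_j,\delta_j'>0$. For distinct $x,y\in K_i$ whose codings first disagree at position $k+1$, with common prefix $e_1\cdots e_k$ of ratio product $\rho:=\rho_{e_1}\cdots\rho_{e_k}$, the points $x,y$ lie in \emph{different} immediate sub-pieces of $S_{e_1}\circ\cdots\circ S_{e_k}(K_{j_k})$, so
\[
\rho\,\delta_{j_k}\le d(x,y)\le \rho\,D_{j_k},\qquad \rho\,\delta_{j_k}'\le d'(f_i(x),f_i(y))\le \rho\,D_{j_k}'.
\]
Dividing these inequalities and using that only finitely many vertices occur gives a uniform two-sided bound on $d'(f_i(x),f_i(y))/d(x,y)$, so $f_i$ is bilipschitz.

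The main obstacle I expect is verifying $\delta_j>0$ for every vertex $j$, which is precisely the place where the dust-like hypothesis on $(\ref{MWW})$ enters in a nonremovable way: for a graph-directed system with touching immediate pieces, the diameter-to-gap comparison collapses. Once $\delta_j,\delta_j'>0$ are secured, the entire argument rests on the simple but decisive observation that $\omega$, and therefore $\rho$, is shared between the two systems, so the metric behavior of $f_i$ is governed entirely by the common combinatorial data $(G,\{\rho_e\}_e)$ rather than by the ambient spaces $(X,d)$ or $(X',d')$.
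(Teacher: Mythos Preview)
Your argument is correct. The paper does not actually prove this lemma; it merely quotes it as Theorem~2.1 of~\cite{RaRuX06}, so there is no in-paper proof to compare against. Your coding-map construction $f_i=\pi_i'\circ\pi_i^{-1}$ together with the diameter/gap estimate is exactly the standard proof of that cited result, and the one nontrivial point---that $\delta_j>0$ for each vertex---is handled correctly by the dust-like hypothesis (finitely many pairwise disjoint compact pieces).

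One small technicality you may want to make explicit in a polished version: if some vertex $j$ has only one outgoing edge, then $\delta_j$ is a minimum over an empty set; but in that case two codings cannot first diverge immediately after visiting $j$, so $\delta_j$ is never invoked and the estimate goes through unchanged.
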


The next lemma comes from \cite{CooPi88}.

\begin{lemma}
\label{copy}Suppose $E$ is a dust-like self-similar set in a compact metric
space. Let $K=\cup _{i}f_{i}(E)$ be a disjoint union such that $f_{i}$ is a
bilipschitz mapping for each $i.$ Then $K$ and $E$ are bilipschitz
equivalent.
\end{lemma}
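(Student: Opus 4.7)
The plan is to reduce to the symbolic code space $\Sigma_{n}^{r}$ and then invoke the core Cooper--Pignataro doubling lemma.

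First, since $E$ is dust-like self-similar with $E = \bigsqcup_{j=1}^{n} S_{j}(E)$ at common ratio $r$, the uniform positive separation of the cylinders $S_{\omega}(E)$ at every level makes the address map $\pi : \Sigma_{n}^{r} \to E$ a bilipschitz bijection. So $E \cong_{bilip} \Sigma_{n}^{r}$, each $f_{i}(E) \cong_{bilip} \Sigma_{n}^{r}$, and it suffices to show $K \cong_{bilip} \Sigma_{n}^{r}$.

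Next, I would realize $K$ (up to bilipschitz) as the attractor of a similarity graph-directed system so that Lemma~\ref{graph} is in reach. Take the graph $G$ with two vertices $u, v$, having $N$ edges from $u$ to $v$ and $n$ loops at $v$, where $N$ is the number of pieces $f_{i}(E)$ in $K$. Build a concrete dust-like graph-directed system $\{E_{u}, E_{v}\}$ on $G$ in some ambient Euclidean space, with the loops at $v$ being similarities of ratio $r$ and the edges from $u$ to $v$ being similarities of any convenient ratio, arranged so that all pieces are disjoint with positive gap. Then $E_{v}$ is a dust-like self-similar set with $n$ pieces and ratio $r$, so $E_{v} \cong_{bilip} E \cong_{bilip} \Sigma_{n}^{r}$, and $E_{u} = \bigsqcup_{i=1}^{N} T_{i}(E_{v})$ is a disjoint union of $N$ similarity copies of $E_{v}$. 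Composing the bilipschitz equivalences $f_{i}(E) \cong_{bilip} E \cong_{bilip} E_{v} \cong_{bilip} T_{i}(E_{v})$ piecewise and using the uniform positive gaps on both sides yields a bilipschitz bijection $K \to E_{u}$.

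The remaining step, which is the technical heart of the argument, is to show $E_{u} \cong_{bilip} E_{v}$; equivalently, that $N$ disjoint similarity copies of $\Sigma_{n}^{r}$ are bilipschitz equivalent to $\Sigma_{n}^{r}$ itself. When $N = n^{k}$ this is immediate from the level-$k$ self-similar decomposition of $\Sigma_{n}^{r}$, and when $N = 1 + k(n-1)$ one can partition $\Sigma_{n}^{r}$ into exactly $N$ cylinders of variable depths and match them via similarities to the $T_{i}(E_{v})$. The main obstacle is general $N$, where the cardinality is incompatible with the $n$-ary tree structure of $\Sigma_{n}^{r}$; here one must construct a bilipschitz bijection by matching clopen blocks of cylinders at variable depths, absorbing the mismatch by performing the matching at arbitrarily deep cylinder levels, where the uniform positive lower bound on gaps between pieces keeps the bilipschitz constant controlled. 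This is the content of the Cooper--Pignataro argument~\cite{CooPi88}.
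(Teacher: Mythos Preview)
The paper does not give its own proof of this lemma; it simply records it as a consequence of Cooper--Pignataro~\cite{CooPi88}. Your proposal is therefore not competing with an argument in the paper but rather sketching what lies behind the citation, and in that role it is essentially correct: the reduction from arbitrary bilipschitz pieces $f_i(E)$ to a disjoint union of $N$ similar copies of $\Sigma_n^r$ is sound (finitely many disjoint compact pieces have positive gap, so piecewise bilipschitz bijections glue to a global one), and you correctly isolate the residual problem---showing that $N$ copies of $\Sigma_n^r$ are bilipschitz equivalent to one copy when $N\not\equiv 1\pmod{n-1}$---as the genuine content of the Cooper--Pignataro argument.

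Two minor remarks. First, the two-vertex graph-directed construction is inert in your own argument: you announce that it puts Lemma~\ref{graph} ``in reach'' but never actually invoke that lemma, and indeed it cannot help here since $E_u$ and $E_v$ sit at different vertices of the graph. The piecewise gluing $K\cong E_u$ you describe works directly, so this scaffolding can be dropped. Second, you tacitly assume $E$ has a single common contraction ratio $r$ in order to identify it with $\Sigma_n^r$; the lemma as stated does not require this, though in every use within the paper the assumption does hold, and the Cooper--Pignataro result covers the general dust-like case.
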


\subsection{Connectedness}

$\ $

Recall Lemma 2.3 and Lemma 2.4 of \cite{XiXiong2} as follows.

\begin{lemma}
\label{l:connected}Let $Y$ be a compact subset of $\mathbb{R}^{l}.$ Suppose $%
\{X_{k}\}_{k}$ are connected compact subsets of $Y$. Then there exist a
subsequence $\{k_{i}\}_{i}$ and a connected X such that $X_{k_{i}}\overset{%
\text{d}_{H}}{\longrightarrow }X$ as $i\rightarrow \infty $, where $d_{H}$
is the Hausdorff metric.
\end{lemma}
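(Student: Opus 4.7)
The plan is to break the statement into two classical ingredients: first extract a Hausdorff-convergent subsequence via a Blaschke-type selection argument, and second verify that connectedness passes to the Hausdorff limit. Neither ingredient uses anything specific about graph-directed sets, so the proof is essentially topological.

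For the extraction of a convergent subsequence, I would rely on the fact that the space $\mathcal{K}(Y)$ of nonempty compact subsets of the compact metric space $Y$, equipped with the Hausdorff metric $d_{H}$, is itself compact. A direct diagonal argument works: cover $Y$ by a finite $\varepsilon$-net for each $\varepsilon = 1/n$, classify each $X_{k}$ by the subset of net points within distance $1/n$, and pass to a nested sequence of subsequences that stabilize the classification at every scale; the diagonal subsequence $X_{k_i}$ is Cauchy in $d_{H}$. Since $\mathcal{K}(Y)$ is complete under $d_{H}$, this yields a limit $X\in\mathcal{K}(Y)$.

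The second step is to show that the Hausdorff limit $X$ of connected compacta is connected. Suppose toward contradiction that $X = A\cup B$ with $A,B$ nonempty closed and disjoint. Since $X$ is compact, $A$ and $B$ are compact, hence $\delta := \dist(A,B) > 0$. Let $U = \{y : \dist(y,A) < \delta/3\}$ and $V = \{y : \dist(y,B) < \delta/3\}$; these are open and disjoint since any point in $U\cap V$ would give $\dist(A,B)<2\delta/3$. For $i$ large enough that $d_{H}(X_{k_i},X) < \delta/3$, every point of $X_{k_i}$ lies within $\delta/3$ of a point of $X\subset A\cup B$, so $X_{k_i}\subset U\cup V$. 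Moreover, picking $a\in A$ and $b\in B$, there are points of $X_{k_i}$ within $\delta/3$ of each, so $X_{k_i}\cap U\neq\emptyset$ and $X_{k_i}\cap V\neq\emptyset$. This exhibits $X_{k_i}$ as a union of two nonempty relatively open disjoint subsets, contradicting connectedness.

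There is no real obstacle here; the only mild care needed is the $\delta/3$ (as opposed to $\delta/2$) bookkeeping, to simultaneously ensure that the two neighborhoods are disjoint and that $X_{k_i}$ sits inside their union and meets both of them. Combining the two steps yields the lemma.
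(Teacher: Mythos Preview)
Your proof is correct. Both steps --- the Blaschke selection argument for extracting a $d_{H}$-convergent subsequence and the $\delta/3$ argument for preservation of connectedness under Hausdorff limits --- are standard and carried out cleanly. Note, however, that the paper does not supply its own proof of this lemma: it simply recalls the statement from~\cite{XiXiong2} (as Lemma~2.3 there), so there is no in-paper argument to compare against. What you have written is exactly the sort of self-contained justification one would expect for such a citation.
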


\begin{lemma}
\label{l:total}Let $\{Y_{1},Y_{2},\cdots ,Y_{k}\}$ be totally disconnected
compact subsets of $\mathbb{R}^{l}$. Then $Y=\cup _{i=1}^{k}Y_{t}$ is
totally disconnected.
\end{lemma}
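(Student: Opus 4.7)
The plan is to reduce by induction on $k$ to the case $k=2$ and then, for the base case, to separate any two distinct points in the union by an explicit clopen set.

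Suppose $A,B\subset\mathbb{R}^{l}$ are compact and totally disconnected, and let $x\neq y$ in $A\cup B$. First I would swap $A$ and $B$ if necessary to assume $x\in A$, and then use the standard fact that a compact totally disconnected metric space has a basis of clopen sets to produce a clopen $V\subset A$ containing $x$ but not $y$. Since $V$ and $A\setminus V$ are disjoint compact subsets of $\mathbb{R}^{l}$, and since $\{y\}$ is disjoint from the compact set $V$, the number
\[
\delta=\min\bigl\{d(V,A\setminus V),\,d(y,V)\bigr\}
\]
is strictly positive, with the convention $d(\cdot,\emptyset)=+\infty$ in case $V=A$. Next, using the total disconnectedness and compactness of $B$, I would partition $B=B_{1}\sqcup\cdots\sqcup B_{n}$ into clopen subsets of $B$ with $\operatorname{diam}(B_{i})<\delta/2$ for every $i$. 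Setting $I=\{i:B_{i}\cap V\neq\emptyset\}$, I define
\[
U=V\cup\bigcup\nolimits_{i\in I}B_{i}.
\]

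The main obstacle will be to verify that $U$ is clopen in $A\cup B$ and that it separates $x$ from $y$. Closedness of $U$ is immediate as it is a finite union of compact sets. For openness one must show that
\[
(A\cup B)\setminus U=\bigl((A\setminus V)\setminus\bigcup\nolimits_{i\in I}B_{i}\bigr)\cup\bigcup\nolimits_{j\notin I}B_{j}
\]
is closed. The delicate point is to rule out a convergent sequence $a_{n}\to a_{\infty}$ drawn from the first summand whose limit lies in some $B_{i_{0}}$ with $i_{0}\in I$: such a $B_{i_{0}}$ would contain a point of $V$ within distance $\operatorname{diam}(B_{i_{0}})<\delta/2$ of $a_{\infty}\in A\setminus V$, contradicting $d(V,A\setminus V)\geq\delta$. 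The separation $x\in U$, $y\notin U$ then follows similarly: $x\in V\subset U$ is immediate, while $y\in U$ would force either $y\in V$ (excluded by construction) or $y\in B_{i}$ for some $i\in I$, the latter placing $y$ within $\delta/2$ of $V$ and contradicting $d(y,V)\geq\delta$.

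Finally, the induction on $k$ is routine: assuming the lemma for $k-1$, the set $Y'=Y_{1}\cup\cdots\cup Y_{k-1}$ is compact and totally disconnected by the inductive hypothesis, and applying the case $k=2$ to $Y'$ and $Y_{k}$ yields the statement for $k$.
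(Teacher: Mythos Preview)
Your argument is correct. The reduction to $k=2$ is sound, and in the base case your construction of the clopen set $U=V\cup\bigcup_{i\in I}B_i$ works as claimed: the key estimate $\operatorname{diam}(B_i)<\delta/2\le \tfrac12\,d(V,A\setminus V)$ indeed blocks any limit point of $(A\setminus V)\setminus\bigcup_{i\in I}B_i$ from lying in some $B_{i_0}$ with $i_0\in I$, and the same estimate with $d(y,V)$ shows $y\notin U$. One small remark: in checking closedness of $(A\cup B)\setminus U$ you should also note that the second summand $\bigcup_{j\notin I}B_j$ is genuinely contained in the complement of $U$ (which follows since $B_j\cap V=\emptyset$ for $j\notin I$ and the $B_j$ are pairwise disjoint), but this is immediate.

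As for comparison with the paper: there is nothing to compare. The paper does not prove this lemma; it merely quotes it as Lemma~2.4 of \cite{XiXiong2}. Your direct clopen-separation argument is a perfectly good self-contained proof of the quoted statement.
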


\medskip

\section{Graph-directed fractal with integer characteristic}

Suppose $\{K_{i}\}_{i=1}^{p}\ $are dust-like graph-directed sets with ratio $%
r$ and integer characteristic $m.$ As in Subsection \ref{order of block}, we
give rank for every vertex in the corresponding directed graph.

We notice that Theorem 2 follows from Theorem 1 and Lemma \ref{sym}, where $%
r^{k_{1}}=(r^{\prime })^{k_{2}}$ and $m^{k_{1}}=(m^{\prime })^{k_{2}}$ with $%
k_{1},k_{2}\in \mathbb{N}.$

To prove Theorem 1, it suffices to prove the following inductive proposition.

\begin{proposition}
\label{P:k}If the vertex $i$ is of rank $k$ in the directed graph, then $%
K_{i}$ is bilipschitz equivalent to $\Sigma _{m}^{r}.$
\end{proposition}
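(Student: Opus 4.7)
I induct on the rank $k$ with the hypothesis ``$K_{j'}\sim\Sigma_m^r$ for every vertex $j'$ of rank $<k$'' (vacuous at $k=0$). Fix a rank-$k$ vertex $i$, and by Lemma \ref{sym} reduce to showing $K_i\sim\Sigma_{m^u}^{r^u}$ for a convenient $u$. Apply Lemma \ref{irreducibe} to the irreducible submatrix $A_{[i]}$ to pick $u$ for which $A_{[i]}^u$ is, after a permutation, block diagonal with positive square blocks; let $D$ denote the one whose index set $J\ni i$. Iterating \eqref{MWW} $u$ times and using Lemma \ref{lemma:order} (any length-$u$ path leaving $[i]$ ends at a vertex of rank $<k$) gives a dust-like disjoint decomposition
\[
K_i \;=\; \bigsqcup_{j\in J}\bigsqcup_{t=1}^{(D)_{ij}} S_{j,t}(K_j)\;\sqcup\;\bigsqcup_{l:\,\mathrm{rank}(l)<k}\bigsqcup_{t=1}^{N_{il}} T_{l,t}(K_l)
\]
with each similitude of ratio $r^u$; by the inductive hypothesis every $T_{l,t}(K_l)$ is bilipschitz to $\Sigma_m^r$.

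\textbf{Key step.} Iterate a further $N$ times. Perron--Frobenius (Lemma \ref{spec}) applied to $Du=m^u u$ and to $Av=mv$ shows that both the within-block counts $(D^N)_{ij}$ and the lower-rank counts $N_{il}^{(N)}$ grow like $m^{Nu}$. For $N$ large enough, hypothesis \eqref{condi} of Lemma \ref{com} is satisfied, with the lower-rank pieces (all bilipschitz to $\Sigma_m^r$ by induction) supplying the multiplicities $\#\Delta_j$. The combinatorial lemma then rearranges the $r^{Nu}$-sub-pieces of $K_i$ into $m^{Nu}$ \emph{metablocks} of identical combinatorial type. Applying Lemma \ref{graph}, I replace $\{K_j\}_{j\in[i]}$ by a reference dust-like graph-directed family on the same graph with the same ratio $r$ but placed inside $\Sigma_m^r$; this makes every metablock a disjoint union of bilipschitz copies of $\Sigma_m^r$ with uniformly controlled constants. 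Lemma \ref{copy} then yields first that each metablock, and then that $K_i$ itself, is bilipschitz equivalent to $\Sigma_m^r$.

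\textbf{Main obstacle.} The delicate case is the base $k=0$, where the second (lower-rank) sum is empty, so no ready-made $\Sigma_m^r$-copies exist to seed Lemma \ref{com}. The regrouping must then be carried out purely on the within-block pieces, and all $K_j$ for $j\in[i]$ must be shown bilipschitz to $\Sigma_m^r$ simultaneously, as they are coupled through the graph-directed equations. The integer characteristic assumption \eqref{assumption2} is indispensable for precisely this reason: Lemma \ref{com} demands integer combinatorial data, and it is the integrality of the Perron--Frobenius eigenvalue $m^u$ of $D$ that forces the counts of sub-pieces of each type to be integers summing to the right power of $m$. The technical heart is the construction, simultaneously for every $j\in[i]$, of a reference graph-directed realization inside $\Sigma_{m^u}^{r^u}$ that geometrically implements the combinatorial matching provided by Lemma \ref{com}; without this, one has only a measure-theoretic balance rather than the bilipschitz equivalence Lemma \ref{copy} needs as input.
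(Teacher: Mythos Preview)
Your inductive framework on the rank matches the paper's, and for the base case $k=0$ your sketch is essentially the paper's Proposition~\ref{P:zero}: reduce via Lemma~\ref{irreducibe} to a positive block $D$, use Lemma~\ref{com} (the Frobenius-number regrouping) to split the level-$k^{\ast}$ pieces of each $K_{j}$ into $a_{j}$ batches each of total weight $m^{k^{\ast}}$, realize the same combinatorics by reference sets $\Pi_{1}^{a_{j}}\subset\Sigma_{m^{k^{\ast}}}^{r^{k^{\ast}}}$, and conclude by Lemma~\ref{graph}. At rank $0$ the block $[i]$ is maximal, so the eigenvector equation restricts cleanly to $A_{[i]}v_{[i]}=mv_{[i]}$; after Lemma~\ref{irreducibe} one genuinely has $Dw=m^{u}w$ for some $w>0$, and your growth estimate $(D^{N})_{ij}\asymp m^{Nu}$ is correct there---that is exactly what makes hypothesis~\eqref{condi} hold.

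The gap is in the inductive step $k>0$. There $C\neq 0$, so $A_{[i]}v_{[i]}\lneq mv_{[i]}$, and Lemma~\ref{spec} forces $\rho(A_{[i]})<m$; hence $\rho(D)<m^{u}$, and the within-block counts $(D^{N})_{ij}$ grow only like $\rho(D)^{N}$, not like $m^{Nu}$ as you assert. In the extreme case where $[i]$ is a single vertex with one self-loop, $D=(1)$ and the within-block count is identically $1$, so hypothesis~\eqref{condi} of Lemma~\ref{com} fails outright for that type regardless of $N$. Your plan to run Lemma~\ref{com} uniformly across all ranks therefore breaks. The paper handles $k>0$ by an entirely different mechanism (Proposition~\ref{P:upper}): it \emph{exploits} the strict inequality $\rho(D)<m$ to arrange, after passing to a power, that the total within-block weight $\sum_{j\leq s}\eta_{i,j}b_{j}$ is strictly less than $m$. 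All within-block descendants at each step then fit inside a single symbol-column of $\Sigma_{m}^{r}$, and an explicit bilipschitz map $f_{i}:K_{i}\to\Pi_{1}^{b_{i}}$ is built by recursion on the code, invoking the rank-$<k$ hypothesis only at the first moment a path exits $[i]$. Lemma~\ref{com} plays no role for $k>0$; ironically, the step you flagged as the main obstacle ($k=0$) is the one your plan actually handles, while the step you treated as routine is where it fails.
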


For this, we need two propositions. The first is on $k=0.$

\begin{proposition}
\label{P:zero}Any set w.r.t. the vertex of rank $0$ is bilipschitz
equivalent to $\Sigma _{m}^{r}.$
\end{proposition}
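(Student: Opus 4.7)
The plan is to build, inside $\Sigma_m^r$, a companion dust-like graph-directed system on the same graph as $\{K_j\}_{j\in[i]}$ with the same ratio $r$, and then to transfer the bilipschitz equivalence through Lemmas~\ref{graph} and~\ref{copy}. Lemma~\ref{com} supplies the Frobenius-type combinatorial engine that makes the companion system constructible.

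I first reduce to the primitive case. The adjacency matrix $B$ of the block $[i]$ is irreducible with Perron--Frobenius eigenvalue $m$, so by Lemma~\ref{irreducibe} some power $B^u$ becomes block-diagonal, after a permutation, with positive blocks of Perron eigenvalue $m^u$. Restricting to the cyclic class of $i$ and replacing $G$ by its length-$u$ path graph produces a dust-like graph-directed system with ratio $r^u$, characteristic $m^u$, and primitive adjacency. Since $\Sigma_{m^u}^{r^u}\simeq \Sigma_m^r$ by Lemma~\ref{sym}, it is enough to treat the primitive case; I henceforth assume $B$ is primitive with vertices $\{1,\dots,q\}$ and fix a positive integer Perron eigenvector $v=(v_1,\dots,v_q)$ with $\gcd_j v_j=1$.

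For a sufficiently large common level $N$, I take each $K_j'\subset \Sigma_m^r$ to be a disjoint union of exactly $v_j$ cylinders of level $N$ and look for ratio-$r$ similitudes $S_e'$ on $\Sigma_m^r$ realising
\[
K_j'=\bigsqcup_{k}\bigsqcup_{e\in \mathcal{E}_{jk}} S_e'(K_k')\qquad\text{for every vertex }j\text{ of the block.}
\]
The Perron identity $\sum_k a_{jk}v_k=mv_j$ matches the total cylinder count on both sides at level $N{+}1$. Lemma~\ref{com}, applied with the distinct values of $v$ as the $a_t$'s, partitions the $mv_j$ level-$(N{+}1)$ cylinders of each $K_j'$ into clusters of the prescribed sizes $v_k$ (one cluster per outgoing edge), because the Perron growth $(B^N)_{jk}\sim c_{jk} m^N$ with $c_{jk}>0$ secures both the size bound~(\ref{p}) and the count condition~(\ref{condi}) once $N$ is large. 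The abundance of ratio-$r$ similitudes on $\Sigma_m^r$ (compositions of the maps $T_a$ with tree-permutation isometries) then lets each cluster be realised as $S_e'(K_{k(e)}')$ after a consistent choice of cylinders.

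Once the companion system is in place, Lemma~\ref{graph} gives $K_i\simeq K_i'$, and since $K_i'$ is a finite disjoint union of cylinders of $\Sigma_m^r$ each of which is a bilipschitz copy of $\Sigma_m^r$, Lemma~\ref{copy} gives $K_i'\simeq \Sigma_m^r$; composing finishes the proof. The main obstacle is that the partitions produced by Lemma~\ref{com} at the various vertices of the block must be made globally consistent, so that the cluster assigned to each edge $e$ is genuinely a tree-isometric image of the target $K_{k(e)}'$; this global bookkeeping, rather than any single application of the Frobenius lemma, is the delicate technical core of the proof.
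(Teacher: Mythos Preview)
Your overall architecture---reduce to a primitive block via Lemma~\ref{irreducibe}, pass to a high iterate, build a companion graph-directed system in a symbolic space, then apply Lemmas~\ref{graph} and~\ref{copy}---is exactly the paper's. The gap is in the companion construction.

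First, there is an internal inconsistency. You set up the companion relations $K_j'=\bigsqcup_{k}\bigsqcup_{e\in\mathcal E_{jk}}S_e'(K_k')$ with the \emph{original} edge set and similitudes of ratio~$r$, yet you appeal to the growth of $(B^N)_{jk}$ to secure the hypotheses~\eqref{p} and~\eqref{condi} of Lemma~\ref{com}. With the original graph the quantities $\#\Delta_k$ entering~\eqref{condi} are (sums of) the fixed $a_{jk}$, and no choice of the cylinder level~$N$ enlarges them; the counts $(B^N)_{jk}$ appear only if you pass to the $N$-th iterate of the graph, in which case the similitude ratio must be $r^N$, not $r$. Presumably the latter is intended, but as written the two halves of the paragraph do not fit together.

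Second, and more substantively, building the companion inside $\Sigma_m^r$ at level $N$ does not sidestep the issue you label ``the delicate technical core''; it creates it. In $\Sigma_m^r$ the $m^N$ level-$N$ cylinders are \emph{not} mutually equidistant---their pairwise distances encode longest common prefixes---so a $v_k$-element cluster of level-$2N$ subcylinders need not be isometric (up to scale) to your chosen $K_k'$. Achieving this simultaneously for every edge and every vertex is a genuine tree-packing problem that Lemma~\ref{com} does not address, and you offer no mechanism to solve it. The paper eliminates this difficulty by constructing the companion in $\Sigma_{m^{k^*}}^{r^{k^*}}$ rather than in $\Sigma_m^r$: the models are $\Pi_1^{a_i}$, unions of $a_i$ level-$1$ cylinders, all at mutual distance~$1$. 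Then \emph{any} $a_j$ level-$2$ subcylinders of a fixed level-$1$ cylinder are automatically isometric to $r^{k^*}\Pi_1^{a_j}$, and the partition supplied by Lemma~\ref{com} (with $b=m^{k^*}$, $c=a_i$, the bound~\eqref{condi} holding because every $d_{i,j}^{(k^*)}\ge C^{-1}m^{k^*}$) immediately yields the required similitudes with no further consistency check. One then returns to $\Sigma_m^r$ via Lemma~\ref{sym} only at the very end. The missing idea in your outline is therefore to enlarge the \emph{alphabet} (to $m^{k^*}$) rather than the \emph{level}; once you do that, the ``delicate technical core'' disappears.
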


\begin{proposition}
\label{P:upper}Let $\{K_{1},\cdots ,K_{s},K_{s+1},\cdots K_{s+t}\}$ be
dust-like graph-directed sets with ratio $r,$ integer characteristic $m$ and
block upper triangular adjacent matrix
\begin{equation*}
\left(
\begin{array}{cc}
D_{s\times s} & C_{s\times t} \\
O & B_{t\times t}%
\end{array}%
\right) ,
\end{equation*}%
where $D$ is irreducible and $C\neq 0$. If $K_{s+1},\cdots ,K_{s+t}$ are
bilipschitz equivalent to $\Sigma _{m}^{r}$ respectively, then $K_{i}$ is
also bilipschitz equivalent to $\Sigma _{m}^{r}$ for any $1\leq i\leq s.$
\end{proposition}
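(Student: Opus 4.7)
My plan is to reduce Proposition \ref{P:upper} to an application of Lemma \ref{copy} by exhibiting each top $K_i$ ($1\le i\le s$) as a finite disjoint union of uniformly bilipschitz copies of $\Sigma_m^r$. The bottom sets $K_{s+1},\dots,K_{s+t}$ are given bilipschitz to $\Sigma_m^r$, so every similitude image $S_\omega(K_{s+j})$ inherits that property; the task is to organize the similitude images arising in the iterated graph-directed decomposition of $K_i$ into such a finite union. The combinatorics of this organization will be controlled by the Frobenius regrouping of Lemma \ref{com}, fueled by the integer characteristic.

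The first step is spectral. Splitting $v=(v_{top},v_{bot})$ in $Av=mv$ gives $Dv_{top}+Cv_{bot}=mv_{top}$, and since $C\neq 0$ and $v_{bot}>0$ the vector $Cv_{bot}$ is nonnegative and nonzero, so Lemma \ref{spec} forces $\rho(D)<m$. Because $A-mI$ is an integer matrix, one may take $v$ to be a positive integer vector with $\gcd(v_1,\dots,v_p)=1$. Lemma \ref{irreducibe} then supplies an iterate $u_0$ making $D^{u_0}$ (after permutation) block-diagonal positive, and at a large multiple $u=Nu_0$ the decomposition
\[
K_i=\bigsqcup_{\omega} S_\omega\!\bigl(K_{\mathrm{end}(\omega)}\bigr)
\]
runs over length-$u$ paths $\omega$ starting at $i$. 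The top row of $A^u v=m^u v$ reads $\sum_{j'>s}(M^{(u)})_{ij'}\,v_{j'}=m^u v_i-(D^u v_{top})_i$, and since $(D^u v_{top})_i=O(\rho(D)^u)=o(m^u)$ the bottom-path multiplicities $(M^{(u)})_{ij'}$ grow like $m^u$ and eventually dominate the lower bound $\#\Delta_{j'}\ge (\max_t v_t+\phi(v_1,\dots,v_p))\,c$ required by Lemma \ref{com}.

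Applying Lemma \ref{com} with $\{a_1,\dots,a_s\}$ taken to be the distinct values in $\{v_1,\dots,v_p\}$ partitions the bottom-ending pieces into finitely many subcollections of a common total weight, each of which is a disjoint union of uniformly bilipschitz copies of $\Sigma_m^r$, hence bilipschitz to $\Sigma_m^r$ by Lemma \ref{copy}. The main obstacle is the top-ending pieces $S_\omega(K_{j'})$ with $j'$ top: they cannot be fed into the Frobenius regrouping without circularity. I would resolve this via Lemma \ref{graph}: construct an auxiliary dust-like graph-directed family $\{\tilde K_j\}$ on the same graph with the same ratio $r$, engineered so that each top $\tilde K_j$ is by design a finite disjoint union of placed bilipschitz copies of $\Sigma_m^r$. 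Realizing this auxiliary family by genuine ratio-$r$ similitudes, consistently with all the graph-directed equations, is the technical crux, and is made possible precisely by the combinatorial flexibility of Lemma \ref{com} together with the strict inequality $\rho(D)<m$ and the integer characteristic. Once $\{\tilde K_j\}$ is in hand, Lemma \ref{graph} gives $K_i\sim\tilde K_i$ and Lemma \ref{copy} gives $\tilde K_i\sim\Sigma_m^r$.
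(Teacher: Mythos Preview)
Your spectral step is correct and matches the paper: from $Dv^*+Cv_{bot}=mv^*$ with $Cv_{bot}\ge 0$, $Cv_{bot}\neq 0$, Lemma \ref{spec} forces $\rho(D)<m$.

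The remainder of your plan, however, carries a real gap. You propose to iterate to level $u$, feed the \emph{bottom}-ending pieces into Lemma \ref{com}, and then absorb the \emph{top}-ending pieces $S_\omega(K_{j'})$, $j'\le s$, by ``engineering'' an auxiliary family $\{\tilde K_j\}$ on the same graph and invoking Lemma \ref{graph}. But that last step is not a construction; it restates the goal. Lemma \ref{graph} requires, on every edge, a genuine similitude of ratio $r$ in the model space, and building such a model inside $\Sigma_m^r$ with each top $\tilde K_j$ already a finite union of cylinders is precisely the placement problem you have not solved. (A union of $b_{j'}$ depth-two cylinders is the similitude image of $\Pi_1^{b_{j'}}$ only when those cylinders share a common parent, so the placement is constrained.) Separately, Lemma \ref{com} needs the total weight to factor as $b\cdot c$ with the cardinality lower bounds (\ref{condi}); the bottom-ending total at level $u$ is $m^u v_i-(D^u v^*)_i$, and you never identify $b,c$ or verify those hypotheses. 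In short, Lemma \ref{com} is the engine of Proposition \ref{P:zero}, not of this proposition, and importing it here does not dispose of the top-ending pieces.

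The paper proceeds quite differently. It does \emph{not} use Lemma \ref{com}, Lemma \ref{irreducibe}, or Lemma \ref{graph} here. From $\rho(D)<m$ it extracts the sharper, coordinatewise consequence (Lemma \ref{l:k*}) that for some $k^*$ and all $k\ge k^*$, $\sum_{j\le s}\eta^{(k)}_{i,j}b_j<m^k$; after replacing $(m,r)$ by $(m^{k},r^{k})$ one may assume $b_j<m$ and
\[
d_i=\sum_{j\le s}\eta_{i,j}\,b_j<m\qquad(1\le i\le s).
\]
This single inequality is what makes the argument run: at each step of the coding of $K_i$, the combined weight of the children landing in top vertices fits inside \emph{one} first-level cylinder of $\Sigma_m^r$. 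The paper then constructs an explicit bilipschitz bijection $f_i:K_i\to\Pi_1^{b_i}$ by recursion on codes $(i_1,q_1)(i_2,q_2)\cdots$: as long as $i_1,\dots,i_k\le s$ one records a prefix $\pi_k$ and keeps refining (possible precisely because $d_{i_k}<m$); the first time the code enters a bottom vertex $i_{k+1}>s$, one plugs in the given bilipschitz map $K_{i_{k+1}}\to\Sigma_m^r$ composed with the accumulated similitudes. The bilipschitz bounds are then verified by a direct two-case estimate (Case 1: the two points already share a bottom-entry prefix; Case 2: they separate while still among top vertices). No Frobenius regrouping and no auxiliary graph-directed family are needed.
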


Before their proofs, we shall show that

\begin{center}
\textbf{Proposition \ref{P:k} follows from Propositions \ref{P:zero} and \ref%
{P:upper}.}
\end{center}

In fact, inductively we assume that any set w.r.t. the vertex of rank less
than $k$ strictly is bilipschitz equivalent to $\Sigma _{m}^{r}.$ Take a
connected block in $\mathfrak{B}_{k}.$ Without loss of generality, assume
that $1,\cdots ,s$ are all the vertices in this connected block, and $%
s+1,\cdots ,s+t$ all the vertices of rank less than $k$ strictly$.$ By Lemma %
\ref{lemma:order}, we get their adjacent matrix in the form
\begin{equation*}
\left(
\begin{array}{cc}
D & C \\
O & B%
\end{array}%
\right) \text{ where }D\text{ is irreducible and }C\neq 0.
\end{equation*}%
Therefore, Proposition \ref{P:k} follows by induction.

Now, we return to the proofs of Propositions \ref{P:zero} and \ref{P:upper}.

\subsection{Proof of Proposition \protect\ref{P:zero}}

$\ $

We assume that $K_{1},\cdots ,K_{t}$ are dust-like graph-directed set of
ratio $r$ with their adjacent matrix $B$ irreducible such that
\begin{equation*}
Bv=mv\text{ with }v>0.
\end{equation*}%
By Lemma \ref{irreducibe}, we assume that $K_{1},\cdots ,K_{s}$ ($s\leq t)$
are dust-like graph-directed set of ratio $r^{u}$ with their adjacent matrix
$D_{1}>0$ such that
\begin{equation}
D_{1}v^{\ast }=m^{u}v^{\ast }\text{ with }v^{\ast }>0.
\end{equation}%
If $s=1,$ we notice that $K_{1}$ is a disjoint union of $m^{u}$ copies of
itself with ratio $r^{u}.$ We also notice that $\Sigma _{m^{u}}^{r^{u}}$ is
a disjoint union of $m^{u}$ copies of itself with ratio $r^{u},$ and $\Sigma
_{m^{u}}^{r^{u}}$ and $\Sigma _{m}^{r}$ are bilipschitz equivalent. Applying
Lemma \ref{graph} to self-similar sets, we have
\begin{equation*}
K_{1}\text{ and }\Sigma _{m}^{r}\text{ are bilipschitz equivalent.}
\end{equation*}

Now, let $s\geq 2.$ We can assume that
\begin{equation}
u=1.
\end{equation}%
Since $m\in \mathbb{N}$ and the entries of $D_{1}$ are natural integers,
there is an integer eigenvalue w.r.t. the Perron-Frobenius eigenvalue $m$
which is a simple eigenvalue, that means $v^{\ast }$ can be write as
\begin{equation*}
v^{\ast }=(a_{1},\cdots ,a_{s})^{T}\text{ with }a_{1},\cdots ,a_{s}\in
\mathbb{N}.
\end{equation*}%
Without loss of generality, we suppose that
\begin{equation*}
\gcd (a_{1},\cdots ,a_{s})=1.
\end{equation*}%
Here we have%
\begin{equation}
D_{1}(a_{1},\cdots ,a_{s})^{T}=m(a_{1},\cdots ,a_{s})^{T},  \label{Dv=mv}
\end{equation}%
Let $(D_{1})^{k}=(d_{i,j}^{(k)})_{1\leq i,j\leq s}.$ Here $m$ is the
Perron-Frobenius eigenvalue of positive matrix $D_{1},$ that means for all $%
k $ any entry $d_{i,j}^{(k)}\geq C^{-1}m^{k}$ for some constant $C\geq 1.$

Take $k^{\ast }$ such that
\begin{equation*}
C^{-1}m^{k^{\ast }}>2(\max (s,\max_{t}a_{t}))^{2}(\max_{t}a_{t}+\phi
(a_{1},\cdots ,a_{s})).
\end{equation*}
Then we have%
\begin{eqnarray*}
m^{k^{\ast }} &>&2s(\max\nolimits_{t}a_{t})(\max\nolimits_{t}a_{t}+\phi
(a_{1},\cdots ,a_{s})). \\
d_{i,j}^{(k^{\ast })} &>&a_{i}(\max\nolimits_{t}a_{t}+\phi (a_{1},\cdots
,a_{s})).
\end{eqnarray*}%
It follows from (\ref{Dv=mv}) that
\begin{equation}
\sum\nolimits_{j=1}^{s}d_{i,j}^{(k^{\ast })}a_{j}=m^{k^{\ast }}\cdot a_{i}
\label{sum1}
\end{equation}

Let $\Omega _{i}$ be an index set such that $\Omega _{i}=\cup
_{j=1}^{s}\Delta _{i,j}$ is a disjoint union with
\begin{equation*}
\#\Delta _{i,j}=d_{i,j}^{(k^{\ast })}\text{ and }l_{\omega }=a_{j}\text{ for
any }\omega \in \Delta _{i,j}.
\end{equation*}%
Let $\theta (\omega )=j$ if $\omega \in \Delta _{i,j}.$ Then $l_{\omega
}=a_{\theta (\omega )}.$

Applying $b=m^{k^{\ast }},$ $c=a_{i}$ to Lemma \ref{com}, we get the
decomposition
\begin{equation*}
\Omega _{i}=\cup _{t=1}^{a_{i}}\Omega _{(i,t)}
\end{equation*}
such that for all $1\leq t\leq a_{i},$
\begin{equation}
\sum\nolimits_{\omega \in \Omega _{(i,t)}}l_{\omega }=m^{k^{\ast }}\text{.}
\label{d1}
\end{equation}

By the definition of adjacent matrix, for every $i,$ we have a disjoint
union
\begin{equation}
K_{i}=\bigcup_{t=1}^{a_{i}}\bigcup\limits_{\omega \in \Omega
_{(i,t)}}h_{\omega }(K_{\theta (\omega )}),  \label{d2}
\end{equation}%
where $h_{\omega }$ is a similitude with ratio $r^{k^{\ast }}.$

Therefore, $\{K_{1},\cdots ,K_{s}\}$ are dust-like graph-directed sets
satisfying (\ref{d2}).

\medskip

To apply Lemma \ref{com}, we consider the graph-directed sets in the metric
space $\Sigma _{m^{k^{\ast }}}^{r^{k^{\ast }}}$

For integers $\alpha ,\,\beta $ with $1\leq \alpha \leq \beta \leq
m^{k^{\ast }}$, a subset $\Pi _{\alpha }^{\beta }$ of $\Sigma _{m^{k^{\ast
}}}^{r^{k^{\ast }}}$ is defined by
\begin{equation*}
\Pi _{\alpha }^{\beta }=\{x_{1}x_{2}\cdots :x_{1}\in \mathbb{N}\cap \lbrack
\alpha ,\beta ]\}.
\end{equation*}%
Take $\gamma \ast \Pi _{\alpha }^{\beta }=\{x_{1}x_{2}x_{3}\cdots
:x_{1}=\gamma $ and $x_{2}x_{3}\cdots \in \Pi _{\alpha }^{\beta }\}$. For a
word $\pi _{k}=y_{1}\cdots y_{k}$, we give a mapping $(\pi _{k}\ast
):x\mapsto \pi _{k}\ast x=y_{1}\cdots y_{k}x_{1}x_{2}x_{3}\cdots $ for $%
x=x_{1}x_{2}x_{3}\cdots .$ Similarly, let $\pi _{k}\ast B=(\pi _{k}\ast
)(B). $

Then there is a natural similitude from $\Pi _{\alpha }^{\beta }$ to $\gamma
\ast \Pi _{\alpha }^{\beta }$ with ratio $r^{k^{\ast }}$. Notice that
\begin{equation}
\Pi _{1}^{\beta -\alpha +1}\text{and }\Pi _{\alpha }^{\beta }\text{ are
isometric,}  \label{d3}
\end{equation}%
\begin{equation}
\Pi _{1}^{\alpha }=\Pi _{1}^{1}\cup \Pi _{2}^{2}\cup \cdots \Pi _{\alpha
}^{\alpha },  \label{d4}
\end{equation}%
and for a sequence $\{\lambda _{1},\cdots ,\lambda _{t}\}$ with $\lambda
_{1}+\cdots +\lambda _{t}=m^{k^{\ast }}$,
\begin{equation}
\Pi _{1}^{1}=\bigcup_{j=1}^{t}(1\ast \Pi _{\lambda _{1}+\cdots +\lambda
_{j-1}+1}^{\lambda _{1}+\cdots +\lambda _{j}})  \label{d5}
\end{equation}%
where there is a natural similitude from $\Pi _{1}^{\lambda _{j}}$ to $1\ast
\Pi _{\lambda _{1}+\cdots +\lambda _{j-1}+1}^{\lambda _{1}+\cdots +\lambda
_{j}}$ with ratio $r^{k^{\ast }}$.

Give a \emph{total order} \textquotedblleft $<$\textquotedblright\ on the
finite set $\Omega _{(i,t)}$ for every $(i,t),$ for $\omega \in \Omega
_{(i,t)},$ let
\begin{equation*}
\alpha (\omega )=\sum\nolimits_{\omega ^{\prime }<\omega }l_{\omega ^{\prime
}}+1\text{ and }\beta (\omega )=\sum\nolimits_{\omega ^{\prime }<\omega
}l_{\omega ^{\prime }}+l_{\omega }.
\end{equation*}%
Then by (\ref{d3})-(\ref{d5}), we have the decomposition%
\begin{equation}
\Pi _{1}^{a_{i}}=\bigcup_{t=1}^{a_{i}}\Pi
_{t}^{t}=\bigcup_{t=1}^{a_{i}}\bigcup\limits_{\omega \in \Omega
_{(i,t)}}(t\ast \Pi _{\alpha (\omega )}^{\beta (\omega )}),  \label{d6}
\end{equation}

For $\omega \in \Omega _{(i,t)},$ let $\bar{h}_{\omega }$ be a natural
mapping such that%
\begin{equation*}
\bar{h}_{\omega }(\Pi _{1}^{l_{\omega }})=t\ast \Pi _{\alpha (\omega
)}^{\beta (\omega )},
\end{equation*}
then (\ref{d6}) implies%
\begin{equation}
\Pi _{1}^{a_{i}}=\bigcup_{t=1}^{a_{i}}\Pi
_{t}^{t}=\bigcup_{t=1}^{a_{i}}\bigcup\limits_{\omega \in \Omega _{(i,t)}}%
\bar{h}_{\omega }(\Pi _{1}^{l_{\omega }}),  \label{d8}
\end{equation}%
where $\bar{h}_{\omega }$ is a similitude with ratio $r^{k^{\ast }}.$

We obtain that
\begin{equation*}
\{\Pi _{1}^{a_{1}},\Pi _{1}^{a_{2}},\dots ,\Pi _{1}^{a_{s}}\}
\end{equation*}%
are dust-like graph-directed sets on the graph determined by (\ref{d8}) and
each similitude has ratio $r^{k^{\ast }}$.

Notice that $l_{\omega }=a_{\theta (\omega )},$ we have
\begin{equation}
\Pi _{1}^{l_{\omega }}=\Pi _{1}^{a_{\theta (\omega )}}\text{ for }\omega \in
\Omega _{i}.  \label{d9}
\end{equation}%
Then by (\ref{d2}), (\ref{d8})-(\ref{d9}) and Lemma \ref{graph}, $K_{i}$ and
$\Pi _{1}^{a_{i}}$ are bilipschitz equivalent.

Here
\begin{equation*}
\Pi _{1}^{\alpha }=\Pi _{1}^{1}\cup \cdots \cup \Pi _{\alpha }^{\alpha
}=\cup _{t=1}^{\alpha }(t\ast \Sigma _{m^{k^{\ast }}}^{r^{k^{\ast }}})
\end{equation*}
is a disjoint union. By Lemma \ref{copy}, $\Pi _{1}^{\alpha }$ and $\Sigma
_{m^{k^{\ast }}}^{r^{k^{\ast }}}$ are bilipschitz equivalent.$\ $That means $%
K_{i}$ is bilipschitz equivalent to $\Sigma _{m^{k^{\ast }}}^{r^{k^{\ast }}}$%
. Notice that $\Sigma _{m^{k^{\ast }}}^{r^{k^{\ast }}}$ and $\Sigma _{m}^{r}$
are bilipschitz equivalent due to lemma \ref{sym}. Finally, $K_{i}$ is
bilipschitz equivalent to $\Sigma _{m}^{r}$ for every $i$.

This completes the proof of Proposition \ref{P:zero}.

\subsection{Proof of Proposition \protect\ref{P:upper}}

$\ $

Let $L=\left(
\begin{array}{cc}
D_{s\times s} & C_{s\times t} \\
O & B_{t\times t}%
\end{array}%
\right) $ and $L^{k}=(\eta _{i,j}^{(k)})_{i,j}.$

Suppose that $v=(b_{1},\cdots ,b_{s},b_{s+1},\cdots b_{s+t})^{T},$ with $%
b_{i}\in \mathbb{N}$ for all $i,$ is the corresponding eigenvector of
adjacent matrix, i.e.,
\begin{equation*}
Lv=mv.
\end{equation*}

By Lemma \ref{copy}, $\Sigma _{m}^{r}$ and $\Pi _{1}^{b_{i}}$ are
bilipschitz equivalent. For any $1\leq i\leq s,$ it suffices to construct a
bilipschitz bijection from $K_{i}$ to $\Pi _{1}^{b_{i}}.$

Let $v^{\ast }=(b_{1},\cdots ,b_{s})^{T}$ and $v_{1}=(b_{s+1},\cdots
b_{s+t})^{T}>0.$ Since $C\geq 0,C\neq 0$ and $Dv^{\ast }+Cv_{1}=mv^{\ast },$
which means%
\begin{equation*}
Dv^{\ast }\leq mv^{\ast }\text{ and }Dv^{\ast }\neq mv^{\ast }.
\end{equation*}

\begin{lemma}
\label{l:k*}There exists $k^{\ast }$ such that for any $k\geq k^{\ast }$ and
any $i\leq s,$%
\begin{equation*}
\sum\nolimits_{j\leq s}\eta _{i,j}^{(k)}b_{j}<m^{k}.
\end{equation*}
\end{lemma}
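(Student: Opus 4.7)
\textbf{Proof plan for Lemma \ref{l:k*}.} Since $L$ is block upper triangular, so is every power $L^{k}$: the top-left $s\times s$ block of $L^{k}$ is exactly $D^{k}$. Hence, writing $v^{\ast}=(b_{1},\dots,b_{s})^{T}$, the quantity I must control equals the $i$-th coordinate of $D^{k}v^{\ast}$:
\begin{equation*}
\sum\nolimits_{j\leq s}\eta_{i,j}^{(k)}b_{j}=(D^{k}v^{\ast})_{i}.
\end{equation*}
So the lemma reduces to showing $(D^{k}v^{\ast})_{i}<m^{k}$ for all $i\leq s$ once $k$ is large enough, and the natural route is to prove that the Perron--Frobenius eigenvalue $\rho(D)$ of the irreducible matrix $D$ is strictly less than $m$.

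The key input is that $C$ carries mass out of the $D$-block into the $B$-block. From $Lv=mv$ and the block form one has
\begin{equation*}
Dv^{\ast}+Cv_{1}=mv^{\ast},\qquad v_{1}=(b_{s+1},\dots,b_{s+t})^{T}>0,
\end{equation*}
and since $C\geq 0$, $C\neq 0$ and $v_{1}>0$, the vector $Cv_{1}$ is nonnegative and nonzero. Let $w>0$ be the left Perron--Frobenius eigenvector of $D$ guaranteed by irreducibility, so that $w^{T}D=\rho(D)w^{T}$. Left-multiplying the displayed identity by $w^{T}$ gives
\begin{equation*}
\rho(D)\,w^{T}v^{\ast}+w^{T}Cv_{1}=m\,w^{T}v^{\ast},
\end{equation*}
so $(m-\rho(D))\,w^{T}v^{\ast}=w^{T}Cv_{1}>0$. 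Because $w^{T}v^{\ast}>0$, this forces $\rho(D)<m$. (Alternatively, apply Lemma~\ref{spec} to $D$ and $v^{\ast}$: one has $\rho(D)\leq\max_{i}(Dv^{\ast})_{i}/v_{i}^{\ast}\leq m$ with equality at the right only if $Dv^{\ast}=mv^{\ast}$, which is excluded by $Cv_{1}\neq 0$.)

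With $\rho(D)<m$ the conclusion is immediate: the non-negative matrix $D/m$ has spectral radius $\rho(D)/m<1$, so $(D/m)^{k}\to 0$ entrywise, and hence $(D^{k}v^{\ast})_{i}/m^{k}\to 0$ for each of the finitely many indices $i\leq s$. Choosing $k^{\ast}$ large enough that this ratio is less than $1$ for all $k\geq k^{\ast}$ and all $i\leq s$ yields the stated inequality. The only substantive step is the strict bound $\rho(D)<m$, for which both the irreducibility of $D$ (to get a strictly positive left eigenvector) and the hypothesis $C\neq 0$ (to make $w^{T}Cv_{1}$ strictly positive) are essential; everything afterwards is a routine spectral-radius estimate.
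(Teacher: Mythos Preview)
Your proof is correct and follows essentially the same approach as the paper: both reduce the lemma to showing $\rho(D)<m$ and deduce the conclusion from there. The paper establishes $\rho(D)<m$ by contradiction via Lemma~\ref{spec} (exactly your parenthetical alternative), whereas your primary argument pairs the identity $Dv^{\ast}+Cv_{1}=mv^{\ast}$ with the strictly positive \emph{left} Perron eigenvector of $D$; this is a slightly cleaner direct argument but not a substantively different route.
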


\begin{proof}
It suffices to verify that $\rho (D)<m,$ where $\rho (D)$ is the spectrum
radius of $D.$ Otherwise, we suppose that $\rho (D)\geq m$.

For irreducible matrix $D$, by Lemma \ref{spec}, we have
\begin{equation*}
m\leq \rho (D)\leq \max_{i}\frac{(Dv^{\ast })_{i}}{(v^{\ast })_{i}}\leq m,
\end{equation*}%
which implies $\rho (D)=\max_{i}\frac{(Dv^{\ast })_{i}}{(v^{\ast })_{i}}=m.$
By using Lemma \ref{spec}, we have
\begin{equation*}
Dv^{\ast }=mv^{\ast }.
\end{equation*}%
This contradicts to $Dv^{\ast }\neq mv^{\ast }$.
\end{proof}

Without loss of generality, we assume that for any $j,$
\begin{equation}
b_{j}<m\text{ and }k^{\ast }=1  \label{b<m}
\end{equation}%
in Lemma \ref{l:k*}. In fact, otherwise we can replace $m$ by $m^{k},$ then $%
L^{k}v=m^{k}v$ and $\Sigma _{m}^{r}$ and $\Sigma _{m^{k}}^{r^{k}}$ are
bilipschitz equivalent.

Now, for any $i\leq s,$
\begin{equation}
d_{i}=\sum\nolimits_{j\leq s}\eta _{i,j}b_{j}<m.  \label{spectrum}
\end{equation}%
Notice that
\begin{equation*}
\sum\nolimits_{1\leq j\leq s+t}\eta _{i,j}b_{j}=m\cdot b_{i},
\end{equation*}%
where $mb_{i}<m^{2}$ due to (\ref{b<m}).

Let $\delta _{i}=\sum\nolimits_{1\leq j\leq s+t}\eta _{i,j}$. For each $i,$
let $\{c(i,q)\}_{q=1}^{\delta _{i}}$ denote the following sequence
\begin{equation*}
\overset{\eta _{i,1}}{\overbrace{b_{1},\cdots ,b_{1}},}\overset{\eta _{i,2}}{%
\overbrace{b_{2},\cdots ,b_{2}},}\cdots \cdots \overset{\eta _{i,s}}{,%
\overbrace{b_{s},\cdots ,b_{s}},}\overset{\eta _{i,s+1}}{\overbrace{%
b_{s+1},\cdots ,b_{s+1}},}\cdots ,\overset{\eta _{i,s+t}}{\overbrace{%
b_{s+t},\cdots ,b_{s+t}}},
\end{equation*}%
and
\begin{equation*}
\gamma _{i,q}=j_{0}\text{ if and only if }\sum\nolimits_{1\leq j\leq
j_{0}-1}\eta _{i,j}<q\leq \sum\nolimits_{1\leq j\leq j_{0}}\eta _{i,j}.
\end{equation*}%
Then
\begin{equation*}
c(i,q)=b_{\gamma _{i,q}}.
\end{equation*}%
Set
\begin{eqnarray*}
\zeta (i,q) &=&\sum\nolimits_{j<q}c(i,j)+1, \\
\kappa (i,q) &=&\sum\nolimits_{j\leq
q}c(i,j)=\sum\nolimits_{j<q}c(i,j)+c(i,q).
\end{eqnarray*}%
\begin{equation*}
\overset{c(i,1)}{\overbrace{\underset{\underset{1}{\downarrow }}{1},\cdots ,%
\underset{\underset{c(i,1)}{\downarrow }}{1}}}\overset{c(i,2)}{\ \overbrace{%
1,\cdots ,1}}\cdots \cdots \overset{c(i,q)}{,\overbrace{\underset{\underset{%
\zeta (i,q)}{\downarrow }}{1},\cdots ,\underset{\underset{\kappa (i,q)}{%
\downarrow }}{1}},}\cdots \cdots
\end{equation*}%
We also have
\begin{equation}
\zeta (i,q)=\sum\nolimits_{1\leq j\leq \gamma _{i,q}-1}(\eta
_{i,j}b_{j})+(q-\sum\nolimits_{1\leq j\leq \gamma _{i,q}-1}\eta
_{i,j}-1)b_{\gamma _{i,q}}+1.  \label{spectrum2}
\end{equation}%
By (\ref{spectrum}), we have
\begin{equation}
\kappa (i,q)<m\text{ if }\gamma _{i,q}\leq s.  \label{spectrum3}
\end{equation}

Then we have the decomposition of $K_{i}$ for $1\leq i\leq s,$%
\begin{equation*}
K_{i}=\cup _{q=1}^{\delta _{i}}S_{i,q}(K_{\gamma _{i,q}}),
\end{equation*}%
where $S_{i,q}$ is a similitude of ratio $r$ for each $q.$

Given a point $x\in K_{i_{1}},$ we get its \emph{code, }an infinite sequence%
\begin{equation*}
(i_{1},q_{1})(i_{2},q_{2})(i_{3},q_{3})\cdots \text{ with }\gamma
_{i_{k},q_{k}}=i_{k+1},\text{ }q_{k}\leq \delta _{i_{k}}\text{ for all }k,
\end{equation*}%
that is $x\in S_{i_{1},q_{1}}(K_{i_{2}}),x\in
S_{i_{1},q_{1}}S_{i_{2},q_{2}}(K_{i_{3}}),x\in
S_{i_{1},q_{1}}S_{i_{2},q_{2}}S_{i_{3},q_{3}}(K_{i_{4}})\cdots ,$ i.e.,%
\begin{equation*}
\{x\}=\bigcap\nolimits_{k}S_{i_{1},q_{1}}\cdots S_{i_{k},q_{k}}(K_{i_{k+1}}).
\end{equation*}

Let $[(i_{1},q_{1})\cdots (i_{k},q_{k})]$ denote the code cylinder
\begin{equation*}
\{x:x\text{ has code }(j_{1},p_{1})\cdots (j_{k},p_{k})\cdots \text{ with }%
j_{u}=i_{u},p_{u}=q_{u}\text{ for any }u\leq k\}.
\end{equation*}

\bigskip

We will construct the bilipschitz bijection by induction. Let
\begin{equation*}
\mathfrak{C}_{(i-1)m+j}=i\ast j\ast \Sigma _{m}^{r}\text{ for }1\leq i,j\leq
m.
\end{equation*}%
Then
\begin{equation*}
\Pi _{1}^{b_{i}}=\cup _{i=1}^{b_{i}}\cup _{j=1}^{m}\mathfrak{C}_{(i-1)m+j},
\end{equation*}%
where $b_{i}<m$ due to (\ref{b<m}).

Since $K_{s+1},\cdots ,K_{s+t}$ are bilipschitz equivalent to $\Sigma
_{m}^{r},$ by Lemma \ref{copy}, for any $(i,q,j)$ with $q>d_{i}$ we have a
natural bilipschitz bijection
\begin{equation*}
h_{i,q,j}:K_{\gamma _{i,q}}\rightarrow \bigcup\nolimits_{k=\zeta
(i,q)}^{\kappa (i,q)}\mathfrak{C}_{k+(j-1)m}.
\end{equation*}

Define $f_{i_{1}}$ on the $[(i_{1},q_{1})]$ with $i_{2}=\gamma
_{i_{1},q_{1}}>s$ such that
\begin{equation*}
f_{i_{1}}|_{[(i_{1},q_{1})]}=h_{i_{1},q_{1},1}\circ S_{i_{1},q_{1}}^{-1}%
\text{ for }i_{2}=\gamma _{i_{1},q_{1}}>s.
\end{equation*}%
Then we have
\begin{equation}
f_{i_{1}}([(i_{1},q_{1})])=\bigcup\nolimits_{j=\zeta (i_{1},q_{1})}^{\kappa
(i_{1},q_{1})}\mathfrak{C}_{j}\text{ for }i_{2}=\gamma _{i_{1},q_{1}}>s.
\label{image0}
\end{equation}

For $i_{2}=\gamma _{i_{1},q_{1}}\leq s$, as in (\ref{image0}), we give a
mapping $\mathfrak{F}_{i_{1}},$ which maps a code cylinders to a union of
finitely many cylinders in $\Sigma _{m}^{r},$ such that
\begin{equation}
\mathfrak{F}_{i_{1}}([(i_{1},q_{1})])=\bigcup\nolimits_{j=\zeta
(i_{1},q_{1})}^{\kappa (i_{1},q_{1})}\mathfrak{C}_{j}\text{ for }%
i_{2}=\gamma _{i_{1},q_{1}}\leq s.  \label{image}
\end{equation}%
When $i_{2}=\gamma _{i_{1},q_{1}}\leq s,$ by (\ref{spectrum}) and (\ref%
{spectrum3}), we have
\begin{equation*}
\kappa (i_{1},q_{1})<m,
\end{equation*}%
then (\ref{image}) means
\begin{equation}
\mathfrak{F}_{i_{1}}([(i_{1},q_{1})])=1\ast \bigcup\nolimits_{j=\zeta
(i_{1},q_{1})}^{\kappa (i_{1},q_{1})}\Pi _{j}^{j}\text{ }\ \text{if }%
i_{1},i_{2}\leq s.  \label{image2}
\end{equation}

As in Example \ref{e:p4}, for
\begin{equation*}
K_{i_{1}}=\bigcup\nolimits_{q_{1}}S_{i_{1},q_{1}}(K_{\gamma _{i_{1},q_{1}}})
\end{equation*}
we arrange its small parts $\{S_{i_{1},q_{1}}(K_{\gamma
_{i_{1},q_{1}}})\}_{q_{1}}$ from left to right, in the same time, we arrange
the small parts $\{\bigcup\nolimits_{j=\zeta (i_{1},q_{1})}^{\kappa
(i_{1},q_{1})}\mathfrak{C}_{j}\}_{q_{1}}$ \ of
\begin{equation*}
\Pi _{1}^{b_{i_{1}}}=\bigcup\nolimits_{q_{1}}\bigcup\nolimits_{j=\zeta
(i_{1},q_{1})}^{\kappa (i_{1},q_{1})}\mathfrak{C}_{j}
\end{equation*}
from left to right. Then $\mathfrak{F}_{i_{1}}$ (when $i_{2}\leq s$) and $%
f_{i_{1}}$ (when $i_{2}>s$) corresponds $S_{i_{1},q_{1}}(K_{\gamma
_{i_{1},q_{1}}})=[(i_{1},q_{1})]$ to $\bigcup\nolimits_{j=\zeta
(i_{1},q_{1})}^{\kappa (i_{1},q_{1})}\mathfrak{C}_{j}.$ Notice that $%
\bigcup\nolimits_{j=\zeta (i_{1},q_{1})}^{\kappa (i_{1},q_{1})}\mathfrak{C}%
_{j}$ includes
\begin{equation*}
\kappa (i_{1},q_{1})-\zeta (i_{1},q_{1})=c(i_{1},q_{1})=b_{\gamma
_{i_{1},q_{1}}}
\end{equation*}%
smaller cylinders, where $\sum\nolimits_{q_{1}}c(i_{1},q_{1})=m\cdot
b_{i_{1}}.$

\begin{example}
\label{e:p4}Let $m=4,$ $s=t=2,$ the matrix w.r.t. the dust-like
graph-directed sets is
\begin{equation*}
M=\left(
\begin{array}{cccc}
0 & 1 & 1 & 0 \\
1 & 1 & 1 & 1 \\
0 & 0 & 1 & 2 \\
0 & 0 & 3 & 2%
\end{array}%
\right) .
\end{equation*}%
Take $v=(b_{1},b_{2},b_{3},b_{4})^{T}=(1,2,2,3)^{T}$ with $Mv=4v.$ Then we
have the first step of $f_{i}$ and $\mathfrak{F}_{i}$ $(i=1,2)$ as in the
following figure.
\end{example}

\vspace{-0.5cm} 
\begin{figure}[tbph]
\centering\includegraphics[width=1\textwidth]{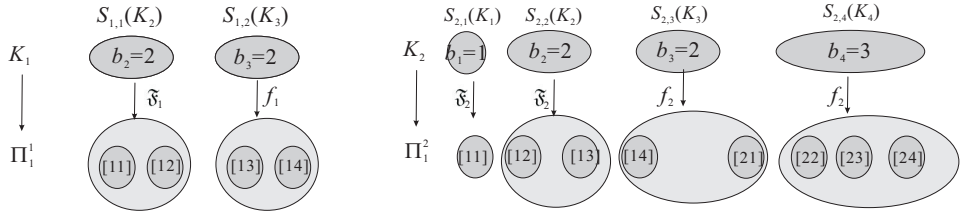} \vspace{-0.3cm}
\caption{The first step of $f_{i}$ and $\mathfrak{F}_{i}$ $(i=1,2)$ }
\end{figure}
\vspace{-0.2cm}

We will define $\mathfrak{F}_{i_{1}}$ and $f_{i_{1}}$ by induction.

Given cylinder $[(i_{1},q_{1})\cdots (i_{k},q_{k})],$ we denote $%
g_{i_{1}}=\left\{
\begin{array}{c}
f_{i_{1}}\text{ if }i_{k}>s, \\
\mathfrak{F}_{i_{1}}\text{ if }i_{k}\leq s.%
\end{array}%
\right. $

We assume for cylinder $[(i_{1},q_{1})\cdots (i_{k},q_{k})]$ with $%
i_{1},i_{2},\cdots ,i_{k-1}\leq s$,
\begin{equation}
g_{i_{1}}([(i_{1},q_{1})\cdots (i_{k},q_{k})])=\pi _{k-1}\ast
\bigcup\nolimits_{j=\zeta (i_{k},q_{k})}^{\kappa (i_{k},q_{k})}\mathfrak{C}%
_{j+m(\zeta (i_{k-1},q_{k-1})-1)}.  \label{ind0}
\end{equation}%
where $\pi _{k-1}$ is a word of length $k-1,$ in particular, for $i_{k}\leq
s,$%
\begin{equation}
\mathfrak{F}_{i_{1}}([(i_{1},q_{1})\cdots (i_{k},q_{k})])=\pi _{k-1}\ast
\zeta (i_{k-1},q_{k-1}))\ast \bigcup\nolimits_{j=\zeta
(i_{k},q_{k})}^{\kappa (i_{k},q_{k})}\Pi _{j}^{j},  \label{ind2}
\end{equation}%
and for $i_{k}>s,$
\begin{equation}
f_{i_{1}}|_{[(i_{1},q_{1})\cdots (i_{k},q_{k})]}=\pi _{k-1}\ast
h_{i_{k},q_{k},\zeta (i_{k-1},q_{k-1})}\circ (S_{i_{1},q_{1}}\circ \cdots
\circ S_{i_{k},q_{k}})^{-1}.  \label{ind-1}
\end{equation}

\begin{figure}[tbph]
\centering\includegraphics[width=1\textwidth]{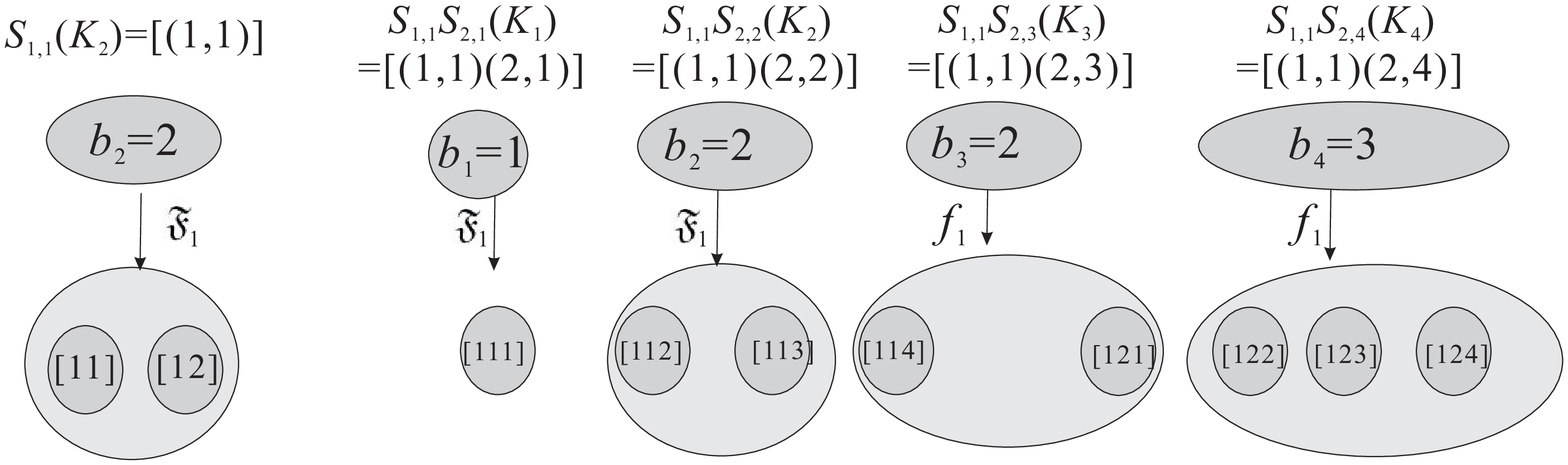} \vspace{-0.3cm}
\caption{The second step of $\mathfrak{F}_{1}$ on $[(1,1)]$ in Example 6}
\end{figure}
\begin{figure}[tbph]
\centering\includegraphics[width=1\textwidth]{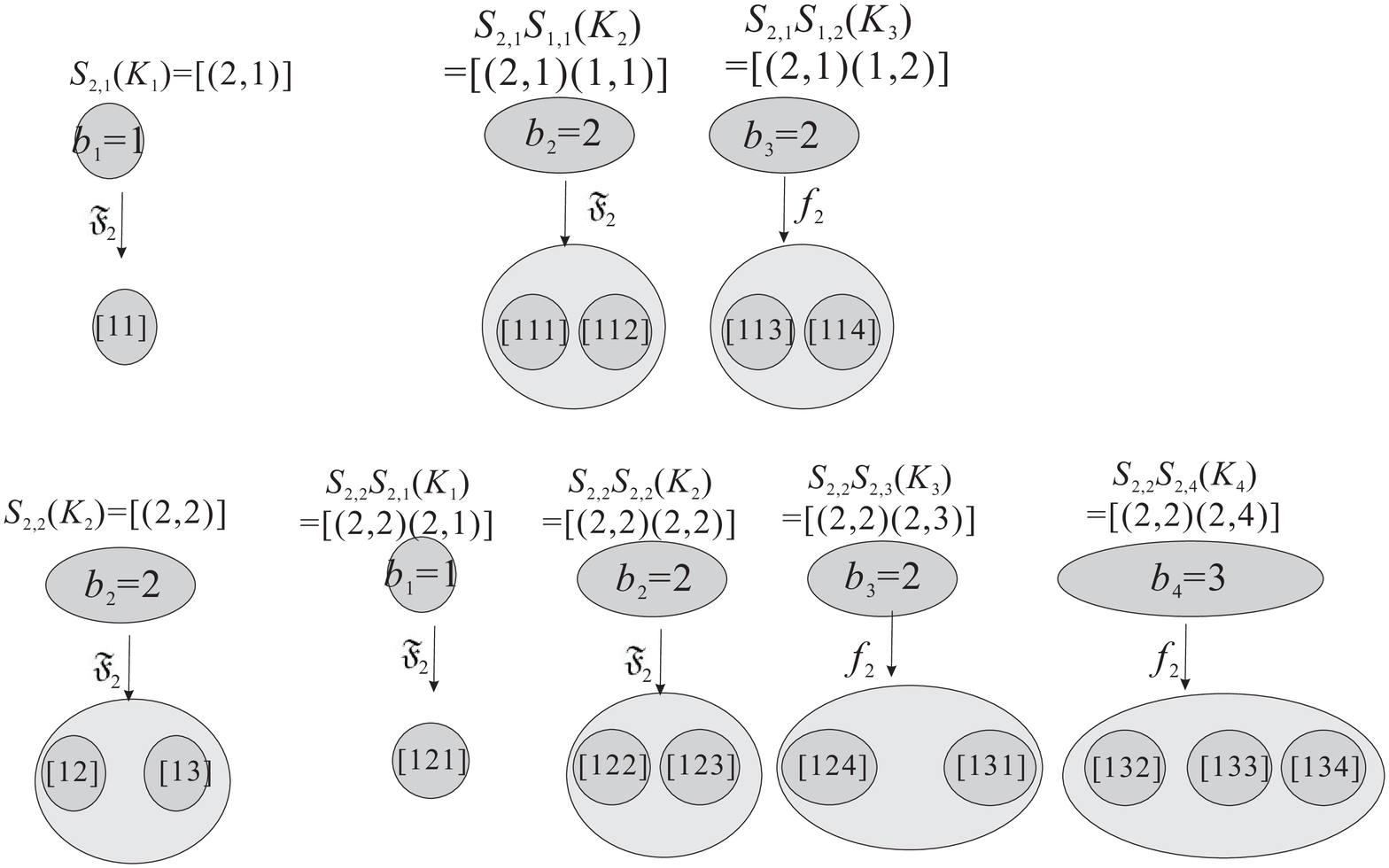} \vspace{-0.3cm}
\caption{The second step of $\mathfrak{F}_{2}$ on $[(2,1)], [(2,2)]$ in
Example 6}
\end{figure}

Inductively, for $[(i_{1},q_{1})\cdots (i_{k},q_{k})(i_{k+1},q_{k+1})]$ with
\begin{equation*}
i_{1},i_{2},\cdots ,i_{k}\leq s,
\end{equation*}%
we have $\zeta (i_{k-1},q_{k-1})<m.$ Let
\begin{equation}
\pi _{k}=\pi _{k-1}\ast \zeta (i_{k-1},q_{k-1})  \label{ind4}
\end{equation}%
be a word of length $k.$

\textbf{Case A}: For $i_{k+1}\leq s,$ we have $\zeta (i_{k},q_{k})\leq
\kappa (i_{k},q_{k})<m$ and let%
\begin{equation}
\mathfrak{F}_{i_{1}}([(i_{1},q_{1})\cdots (i_{k+1},q_{k+1})])=\pi _{k}\ast
\zeta (i_{k},q_{k})\ast \bigcup\nolimits_{j=\zeta (i_{k},q_{k})}^{\kappa
(i_{k},q_{k})}\Pi _{j}^{j}.  \label{ind6}
\end{equation}

\textbf{Case B}: For $i_{k+1}=\gamma _{i_{k},q_{k}}>s,$ we let
\begin{eqnarray}
&&f_{i_{1}}|_{[(i_{1},q_{1})\cdots (i_{k},q_{k})(i_{k+1},q_{k+1})]}
\label{ind-2} \\
&=&\pi _{k}\ast h_{i_{k+1},q_{k+1},\zeta (i_{k},q_{k})}\circ
(S_{i_{1},q_{1}}\circ \cdots \circ S_{i_{k+1},q_{k+1}})^{-1}.  \notag
\end{eqnarray}%
Therefore, We have
\begin{eqnarray}
&&g_{i_{1}}([(i_{1},q_{1})\cdots (i_{k},q_{k})(i_{k+1},q_{k+1})])
\label{ind5} \\
&=&\pi _{k}\ast \bigcup\nolimits_{j=\zeta (i_{k+1},q_{k+1})}^{\kappa
(i_{k+1},q_{k+1})}\mathfrak{C}_{j+m(\zeta (i_{k},q_{k})-1)}.  \notag
\end{eqnarray}%
where $g_{i_{1}}=\left\{
\begin{array}{c}
f_{i_{1}}\text{ if }i_{k+1}>s, \\
\mathfrak{F}_{i_{1}}\text{ if }i_{k+1}\leq s.%
\end{array}%
\right. $

Then $f_{i_{1}}$ and $\mathfrak{F}_{i_{1}}$ have been defined inductively.

\bigskip

For any $i\leq s,$ the following set is dense in $K_{i}:$
\begin{equation*}
H_{i}=\{x:x\text{ }\text{has code }(i_{1},q_{1})\cdots (i_{u},q_{u})\cdots
\text{with }i_{1}=i\text{ and }i_{u}>s\text{ for some }u\}.
\end{equation*}%
By induction $f_{i}$ has been defined on $H_{i}$ for every $i\leq s.$

Therefore, there is a natural extension from $K_{i}\ $to $\Pi _{1}^{b_{i}}$,
still denoted by $f_{i}.$ It follows from (\ref{ind0}) that for $%
i_{1},i_{2},\cdots ,i_{k-1}\leq s,$
\begin{equation}
f_{i_{1}}([(i_{1},q_{1})\cdots (i_{k},q_{k})])=\pi _{k-1}\ast
\bigcup\nolimits_{j=\zeta (i_{k},q_{k})}^{\kappa (i_{k},q_{k})}\mathfrak{C}%
_{j+m(\zeta (i_{k-1},q_{k-1})-1)}.  \label{imp}
\end{equation}

We shall prove that $f_{i}:K_{i}\rightarrow \Pi _{1}^{b_{i}}$ is a
bilipschitz bijection.

In fact, given $i_{1}\leq s,$ suppose distinct points $x,y\in K_{i_{1}}$
with codes
\begin{equation*}
(i_{1},q_{1})\cdots (i_{u-1},q_{u-1})(i_{u},q_{u})\cdots \text{ and }%
(i_{1},q_{1})\cdots (i_{u-1},q_{u-1})(i_{u},p_{u})\cdots
\end{equation*}%
respectively, where $p_{u}\neq q_{u}$.

There are two cases as follows. \newline
\textbf{Case 1}:\textbf{\ }If $i_{k}>s$ for some $k<u,$ we assume that $%
i_{1},\cdots ,i_{k-1}\leq s.$

Then $x,y\in \lbrack (i_{1},q_{1})\cdots (i_{k},q_{k})]$ and by (\ref{ind-1}%
) there is a word $\pi _{k-1}$\ of length $k-1$ such that
\begin{equation*}
f_{i_{1}}|_{[(i_{1},q_{1})\cdots (i_{k},q_{k})]}=\pi _{k-1}\ast
h_{i_{k},q_{k},\zeta (i_{k-1},q_{k-1})}\circ (S_{i_{1},q_{1}}\circ \cdots
\circ S_{i_{k},q_{k}})^{-1}
\end{equation*}%
is bilipschitz mapping, with the bilipschitz constant
\begin{eqnarray*}
blip(f_{i_{1}}|_{[(i_{1},q_{1})\cdots (i_{k},q_{k})]}) &\leq &r^{-1}\cdot
blip(h_{i_{k},q_{k},\zeta (i_{k-1},q_{k-1})}) \\
&\leq &r^{-1}\max_{i,q,j}blip(h_{i,q,j}),
\end{eqnarray*}%
since the mappings $(\pi _{k-1}\ast )$ and $(S_{i_{1},q_{1}}\circ \cdots
\circ S_{i_{k},q_{k}})^{-1}$ are similitudes with ratios $r^{k-1}$ and $%
r^{-k}$ respectively. Therefore, we have%
\begin{equation*}
\frac{r}{\max\limits_{i,q,j}blip(h_{i,q,j})}\leq \frac{\text{d}_{\Sigma
_{m}^{r}}(f_{i_{1}}(x),f_{i_{1}}(y))}{\text{d}(x,y)}\leq \frac{%
\max\limits_{i,q,j}blip(h_{i,q,j})}{r}.
\end{equation*}%
\medskip \newline
\textbf{Case 2}: If $i_{1},\cdots ,i_{u-1}\leq s$, then by (\ref{imp}), we
have%
\begin{eqnarray*}
&&f_{i_{1}}([(i_{1},q_{1})\cdots (i_{u-1},q_{u-1})(i_{u},q_{u})])=\pi
_{u-1}\ast \bigcup\nolimits_{j=\zeta (i_{u},q_{u})}^{\kappa (i_{u},q_{u})}%
\mathfrak{C}_{j+m(\zeta (i_{u-1},q_{u-1})-1)}, \\
&&f_{i_{1}}([(i_{1},q_{1})\cdots (i_{u-1},q_{u-1})(i_{u},p_{u})])=\pi
_{u-1}\ast \bigcup\nolimits_{j^{\prime }=\zeta (i_{u},p_{u})}^{\kappa
(i_{u},p_{u})}\mathfrak{C}_{j^{\prime }+m(\zeta (i_{u-1},q_{u-1})-1)},
\end{eqnarray*}%
where
\begin{equation*}
\mathfrak{C}_{j+m(\zeta (i_{u-1},q_{u-1})-1)}\cap \mathfrak{C}_{j^{\prime
}+m(\zeta (i_{u-1},q_{u-1})-1)}=\emptyset ,
\end{equation*}%
for any $\zeta (i_{u},q_{u})\leq j\leq \kappa (i_{u},q_{u})$ and $\zeta
(i_{u},p_{u})\leq j^{\prime }\leq \kappa (i_{u},p_{u}).$ Hence%
\begin{equation*}
r^{u+1}\leq \text{d}_{\Sigma _{m}^{r}}(f_{i_{1}}(x),f_{i_{1}}(y))\leq
r^{u-1}.
\end{equation*}%
In the same time, $x,y\in S_{i_{1},q_{1}}S_{i_{2},q_{2}}\cdots
S_{i_{u-1},q_{u-1}}(K_{i_{u}})$, we notice that
\begin{equation*}
\text{d}(x,y)\leq r^{u-1}|K_{i_{u}}|\leq r^{u-1}\max_{t}|K_{t}|.
\end{equation*}%
Since $S_{i_{1},q_{1}}\circ S_{i_{2},q_{2}}\circ \cdots \circ
S_{i_{u-1},q_{u-1}}$ has ratio $r^{u-1},$ we have%
\begin{eqnarray*}
\text{d}(x,y) &\geq &r^{u-1}\text{d}(S_{i_{u},q_{u}}(K_{\gamma
_{i_{u,}q_{u}}}),S_{i_{u},p_{u}}(K_{\gamma _{i_{u,}p_{u}}})) \\
&\geq &r^{u-1}\min_{i,q\neq q^{\prime }}\text{d}(S_{i,q}(K_{\gamma
_{i,q}}),S_{i,q^{\prime }}(K_{\gamma _{i,q^{\prime }}})).
\end{eqnarray*}%
Therefore, in this case we have%
\begin{equation*}
\frac{r^{2}}{\max_{t}|K_{t}|}\leq \frac{\text{d}_{\Sigma
_{m}^{r}}(f_{i_{1}}(x),f_{i_{2}}(y))}{\text{d}(x,y)}\leq \frac{1}{%
\min_{i,q\neq q^{\prime }}\text{d}(S_{i,q}(K_{\gamma _{i,q}}),S_{i,q^{\prime
}}(K_{\gamma _{i,q^{\prime }}}))}.
\end{equation*}%
Then $f_{i}$ is bilipschitz bijection from $K_{i}$ to $\Pi _{1}^{b_{i}}$ for
any $i\leq s.$

\medskip

Then Proposition 4 is proved.

\bigskip

\section{Self-similar set}

In this section, we will obtain the graph-directed fractals due to total
disconnectedness, and verify their integer characteristic due to
non-existence of complete overlaps, then we can prove Theorem 3 by using
Theorem 1. Finally we prove Proposition 1, which is the basis of efficient
algorithm to test the existence of complete overlaps.

\subsection{Total disconnectedness}

$\ $

Let $E$ be a totally disconnected self-similar set generated by%
\begin{equation*}
S_{i}(x)=g_{i}(x)/n+b_{i},\text{ }i=1,\cdots ,m,
\end{equation*}%
where isometry $g_{i}$ satisfies $|g_{i}(x)|=|x|$ for any $x$.

Let
\begin{equation*}
\mathbb{H}=\cup _{g\in \mathbb{G}}\cup _{b\in \Gamma }(gE+nb)
\end{equation*}%
Then for any closed ball $B(0,\beta ),$ set
\begin{equation*}
\digamma _{\beta }=\{(g,b)\in \mathbb{G}\times \Gamma :(gE+nb)\cap B(0,\beta
)\neq \emptyset \}.
\end{equation*}%
We have
\begin{equation*}
\#\digamma _{\beta }<\infty
\end{equation*}%
Then it follows from Lemma \ref{l:total}\ that
\begin{equation*}
\mathbb{H}\cap B(0,\beta )=\bigcup\nolimits_{(g,b)\in \digamma _{\beta
}}(gE+nb)\cap B(0,\beta )
\end{equation*}%
is totally disconnected.

We notice that $E\subset B(0,\delta )$ the closed ball with center $0$ and
radius $\delta =\frac{\max_{i}|b_{i}|}{1-1/n},$ since isometry $%
|g_{i}(x)|=|x|$ and
\begin{equation*}
|x|\leq \max_{i}|b_{i}|(1+\frac{1}{n}+\frac{1}{n^{2}}+\cdots )\leq \frac{%
\max_{i}|b_{i}|}{1-1/n}\text{ for any }x\in E.
\end{equation*}%
Let $\mathbb{H}_{k}=\cup _{g\in \mathbb{G}}\cup _{b\in \Gamma }(gE_{k}+nb),$
where
\begin{equation*}
E_{k}=\bigcup\nolimits_{i_{1}\cdots i_{k}}(S_{i_{1}\cdots i_{k}}(0)+\frac{%
B(0,\delta )}{n^{k}}).
\end{equation*}%
Here $E_{k}\overset{d_{H}}{\rightarrow }E$ and thus $\mathbb{H}_{k}\overset{%
d_{H}}{\rightarrow }\mathbb{H}.$

\begin{lemma}
\label{l:there}There exist a constant $\alpha $ and an integer $k^{\ast }$
such that $E\subset B(0,\alpha )$ and any connected components in $\mathbb{H}%
_{k^{\ast }}$ touching $B(0,2\delta )$ cannot touch $\{x:|x|\geq 2\delta
+1\}.$
\end{lemma}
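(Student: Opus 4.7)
The plan is to argue by contradiction, aiming for a conflict with the total disconnectedness of $\mathbb{H}\cap B(0,\beta)$ already established above. Suppose the second conclusion fails for every $k$; then for each $k$ there exists a connected component $C_k$ of $\mathbb{H}_k$ that meets both $B(0,2\delta)$ and $\{x:|x|\ge 2\delta+1\}$.

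First I would extract from each $C_k$ a continuum $Y_k\subset\mathbb{H}_k\cap\overline{B(0,2\delta+1)}$ that meets $\overline{B(0,2\delta)}$ and $\partial B(0,2\delta+1)$. The natural route is cellular: within $\overline{B(0,2\delta+2)}$ only finitely many pairs $(g,b)\in\mathbb{G}\times\Gamma$ contribute (exactly as in the bound $\#\digamma_\beta<\infty$), so $\mathbb{H}_k\cap\overline{B(0,2\delta+2)}$ is a finite union of closed balls of radius $\delta/n^k$. Any connected component of such a finite ball-union is a concatenation of mutually touching balls; since $C_k$ reaches from inside $B(0,2\delta)$ out to $\{|x|\ge 2\delta+1\}$, we can select a finite chain of touching balls contained in $\overline{B(0,2\delta+1)}$, one end-ball meeting $\overline{B(0,2\delta)}$ and the other meeting $\partial B(0,2\delta+1)$. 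Its union is the desired continuum $Y_k$.

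Next I would apply Lemma~\ref{l:connected} inside the compact set $\overline{B(0,2\delta+1)}$: after passing to a subsequence, $Y_{k_i}\overset{\dH}{\longrightarrow}Y$ for some compact connected $Y$. Because each $Y_{k_i}$ meets both $\overline{B(0,2\delta)}$ and $\partial B(0,2\delta+1)$, so does $Y$, and hence $|Y|\ge 1$. Combined with $\mathbb{H}_k\overset{\dH}{\longrightarrow}\mathbb{H}$ (restricted to any fixed bounded region) and $Y_{k_i}\subset\mathbb{H}_{k_i}$, we obtain $Y\subset\mathbb{H}\cap\overline{B(0,2\delta+1)}$, which is totally disconnected. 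This contradicts $Y$ being a non-degenerate continuum.

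The constant $\alpha$ is immediate: since the excerpt already shows $E\subset B(0,\delta)$, any $\alpha\ge\delta$ works (e.g.\ $\alpha=2\delta+1$, chosen so that the components under consideration lie inside $B(0,\alpha)$). The main obstacle is the first step, namely producing $Y_k$ as an honest continuum: leveraging the finite ball-chain structure of $\mathbb{H}_k$ keeps this elementary and avoids a more abstract appeal to the boundary-bumping principle from continuum theory, which would be the alternative route.
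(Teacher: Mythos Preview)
Your proposal is correct and follows essentially the same contradiction argument as the paper: assume a bad component exists for every $k$, extract a connected piece inside $\overline{B(0,2\delta+1)}$ joining the two shells, pass to a Hausdorff limit via Lemma~\ref{l:connected}, and contradict the total disconnectedness of $\mathbb{H}\cap B(0,2\delta+1)$. The only cosmetic difference is that the paper simply takes a curve $\gamma_k$ in the ball-union rather than spelling out the finite touching-ball chain you describe, and it does not bother to discuss $\alpha$ explicitly.
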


\begin{proof}
In fact, we assume on the contrary, for each $k$ there is a connected
components $X_{k}$ in $E_{k}$ touching $B(0,2\delta )$ and $\{x:|x|\geq
2\delta +1\}$. Therefore, for every $k$ we can take a curve $\gamma
_{k}\subset E_{k}\cap \{x:\delta \leq |x|\leq 2\delta +1\}$ with its
endpoints lying in $\{x:|x|=2\delta \}$ and $\{x:|x|=2\delta +1\}$
respectively. By Lemma \ref{l:connected} and $\mathbb{H}_{k}\overset{d_{H}}{%
\rightarrow }\mathbb{H}$, for some subsequence $\{k_{i}\}_{i},$ we have $%
\gamma _{k_{i}}\overset{d_{H}}{\rightarrow }$ $X$, where $X\subset
B(0,2\delta +1)$ is a connected set in $\mathbb{H}\cap B(0,2\delta +1)$
touching $B(0,2\delta )$ and $\{x:|x|\geq 2\delta +1\}.$ This contradicts
the fact that $\mathbb{H}\cap B(0,2\delta +1)$ is totally disconnected.
\end{proof}

We notice that $E$ is also generated by $\{S_{i_{1}\cdots i_{k^{\ast
}}}\}_{i_{1}\cdots i_{k^{\ast }}}$ with ratio $n^{-k^{\ast }}$ and $%
S_{i_{1}\cdots i_{k^{\ast }}}(0)\in \Gamma /n^{(k^{\ast }-1)}.$ Let
\begin{equation*}
\Gamma ^{\prime }=\Gamma /n^{k^{\ast }-1}
\end{equation*}%
be the new discrete additional group of $\mathbb{R}^{l}$ and $n^{\prime
}=n^{k^{\ast }}$ and $\delta ^{\prime }=\delta /n^{k^{\ast }-1}.$ Here $%
g:\Gamma ^{\prime }\rightarrow \Gamma ^{\prime }$ for any $g\in \mathbb{G}$
since $g$ is linear in $\mathbb{R}^{l}.$ Let $E$ be generated by
\begin{equation*}
S_{i}^{\prime }(x)=\frac{g_{i}^{\prime }(x)}{n^{\prime }}+b_{i}^{\prime }%
\text{ where }i=1,\cdots ,m^{\prime }\text{ with }n^{\prime }=n^{k^{\ast
}},m^{\prime }=m^{k^{\ast }}.
\end{equation*}

\begin{definition}
We call a finite set $D\subset \mathbb{G}\times \Gamma ^{\prime }$ a type,
if
\begin{equation*}
\chi _{D}=\bigcup_{(g,d)\in D}(\frac{B(0,\delta )}{n^{\prime }}+d)\text{ is
connected}.
\end{equation*}%
We say $D_{1},D_{2}$ are equivalent, denoted by $D_{1}\sim D_{2}$, if there
exist $g\in \mathbb{G}$ and $b\in n\Gamma $ such that $\lambda
_{g,b}(D_{2})=D_{1},$where
\begin{equation*}
\lambda _{g,b}(g_{2},d_{2})=(gg_{2},gd_{2}+b)
\end{equation*}%
for $(g_{2},d_{2})\in D_{2}.$ Denote $[D]$ the equivalence class w.r.t. $D.$
\end{definition}

If $D$ is a type, we consider the compact set
\begin{equation}
K_{D}=\bigcup_{(g,d)\in D}(\frac{gE}{n^{\prime }}+d).  \label{generate}
\end{equation}%
In particular,
\begin{equation*}
E=K_{\{(g_{1}^{\prime },b_{1}^{\prime }),\cdots ,\{g_{m^{\prime }}^{\prime
},b_{m^{\prime }}^{\prime }\}\}}.
\end{equation*}

\begin{lemma}
$K_{D}\subset \chi _{D.}$
\end{lemma}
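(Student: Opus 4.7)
The claim is essentially a term-by-term containment, so the plan is very short.

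The plan is to show $\frac{gE}{n'}+d\subset\frac{B(0,\delta)}{n'}+d$ for each pair $(g,d)\in D$, and then take the union. The key ingredient already available in the excerpt is the inclusion $E\subset B(0,\delta)$ with $\delta=\max_i|b_i|/(1-1/n)$, which was obtained from $|g_i(x)|=|x|$ by telescoping the address expansion $x=\sum_k g_{i_1}\cdots g_{i_{k-1}}(b_{i_k})/n^{k-1}$. Since every $g\in\mathbb{G}$ is a linear isometry fixing the origin, $g$ maps $B(0,\delta)$ onto itself, hence $g(E)\subset g(B(0,\delta))=B(0,\delta)$.

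Dividing by $n'$ (a positive scalar) preserves the inclusion, so $gE/n'\subset B(0,\delta)/n'$, and translating by $d$ gives $gE/n'+d\subset B(0,\delta)/n'+d$. Taking the union over $(g,d)\in D$ on both sides yields $K_D\subset\chi_D$.

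There is no real obstacle here; the only thing to double check is that the $\delta$ used in the definition of $\chi_D$ is the same $\delta$ for which $E\subset B(0,\delta)$ holds (which it is, by construction), and that $g(B(0,\delta))=B(0,\delta)$ for every $g\in\mathbb{G}$ (which is explicitly noted in the introduction of the class $\Lambda$). So the proof is essentially one line plus the reminder of why $E\subset B(0,\delta)$.
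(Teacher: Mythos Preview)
Your proof is correct and follows exactly the paper's approach: the paper's own argument consists of the single observation that $E\subset B(0,\delta)$ and $g(B(0,\delta))=B(0,\delta)$ for every $g\in\mathbb{G}$, from which the term-by-term containment and union follow just as you describe.
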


\begin{proof}
It suffice to notice that $E\subset B(0,\delta )$ and $g(B(0,\delta
))=B(0,\delta )$ for $g\in \mathbb{G}.$
\end{proof}

\begin{lemma}
\label{iso}If $D_{1}\sim D_{2}$, then $K_{D_{1}}=S(K_{D_{2}})$ for some
isometry $S.$
\end{lemma}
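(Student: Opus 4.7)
The plan is to simply unpack the definitions and construct the isometry explicitly. By assumption, $D_1 \sim D_2$ means there exist $g \in \mathbb{G}$ and $b \in n\Gamma$ with $D_1 = \lambda_{g,b}(D_2)$, i.e.,
\[
D_1 = \{(gg_2,\, g(d_2) + b) : (g_2, d_2) \in D_2\}.
\]
Recall that $g$ extends to a linear isometry of $\mathbb{R}^l$ (since $g(0)=0$ and $g$ preserves distances on $\Gamma$). So the natural candidate for the isometry is
\[
S(x) = g(x) + b,
\]
which is an isometry of $\mathbb{R}^l$ as the composition of a linear isometry and a translation.

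The main step is then a direct computation. Starting from
\[
K_{D_2} = \bigcup_{(g_2,d_2) \in D_2} \Bigl(\tfrac{g_2 E}{n'} + d_2\Bigr),
\]
I would apply $S$ piecewise and use linearity of $g$:
\[
S\Bigl(\tfrac{g_2 E}{n'} + d_2\Bigr) = g\Bigl(\tfrac{g_2 E}{n'} + d_2\Bigr) + b = \tfrac{(gg_2) E}{n'} + g(d_2) + b.
\]
Taking the union over $(g_2,d_2) \in D_2$ and reindexing by $(g_1,d_1) = (gg_2, g(d_2)+b) \in D_1$ yields
\[
S(K_{D_2}) = \bigcup_{(g_1,d_1)\in D_1} \Bigl(\tfrac{g_1 E}{n'} + d_1\Bigr) = K_{D_1}.
\]

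There is essentially no obstacle here: the content of the lemma is that the group action $\lambda_{g,b}$ on pairs $(g_2,d_2)$ is exactly the shadow on indices of the honest isometry $x \mapsto g(x)+b$ acting on $K_{D_2}$. The only thing to verify carefully is that $g$ commutes with scaling by $1/n'$ (which follows since $g$ is linear), so no special care is needed. After the computation above the lemma is established.
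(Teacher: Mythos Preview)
Your proposal is correct and follows essentially the same approach as the paper: both identify the isometry as $S(x)=g(x)+b$ and verify $K_{D_1}=S(K_{D_2})$ by a direct computation using the linearity of $g$ and reindexing via $\lambda_{g,b}$. The only cosmetic difference is that the paper starts from $K_{D_1}$ and factors out $g$ and $b$, whereas you start from $K_{D_2}$ and apply $S$; the content is identical.
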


\begin{proof}
Suppose $\lambda _{(g,b)}(D_{2})=D_{1}$ for some $g\in \mathbb{G}$ and $b\in
n\Gamma .$ Here%
\begin{equation*}
\lambda _{(g,b)}(g_{2},d_{2})=(gg_{2},gd_{2}+b),
\end{equation*}%
which implies%
\begin{eqnarray*}
K_{D_{1}} &=&\bigcup_{(g_{1},d_{1})\in D_{1}}(\frac{g_{1}E}{n^{\prime }}%
+d_{1}) \\
&=&\bigcup_{(g_{2},d_{2})\in D_{2}}(\frac{gg_{2}E}{n^{\prime }}+gd_{2}+b) \\
&=&g\left( \bigcup_{(g_{2},d_{2})\in D_{2}}(\frac{g_{2}E}{n^{\prime }}%
+d_{2})\right) +b \\
&=&g(K_{D_{2}})+b.
\end{eqnarray*}%
This means $K_{D_{1}}=S(K_{D_{2}})$ for some isometry $S.$
\end{proof}

We have the following decomposition for (\ref{generate}):%
\begin{equation*}
K_{D}=\frac{1}{n^{\prime }}\left( \bigcup_{(g,d)\in
D}\bigcup_{i=1}^{m^{\prime }}(\frac{1}{n^{\prime }}(gg_{i}^{\prime
}E)+gb_{i}^{\prime }+n^{\prime }d)\right) .
\end{equation*}%
We give a type decomposition $\cup _{i}D_{i}$ of $\{(gg_{i}^{\prime
},gb_{i}^{\prime }+n^{\prime }d)\}_{(g,d)\in D,\text{ }i\leq m^{\prime }}$
such that $\{D_{i}\}_{i}$ are types and%
\begin{equation}
\chi _{D_{i}}\cap \chi _{D_{j}}=\emptyset \text{ for }i\neq j,  \label{e:de}
\end{equation}%
i.e., $\{\chi _{D_{i}}\}_{i}$ are connected components of $\bigcup_{(g,d)\in
D}\bigcup_{i=1}^{m^{\prime }}(\frac{B(0,\delta )}{n^{\prime }}%
+gb_{i}^{\prime }+n^{\prime }d).$

Therefore, we get a decomposition
\begin{equation}
K_{D}=\frac{1}{n^{\prime }}\bigcup_{D^{\prime }}K_{D^{\prime }}
\label{e:decomp}
\end{equation}%
which is a \emph{disjoint} union due to the $K_{D}\subset \chi _{D.}$

Then we say that $[D^{\prime }]$ above is generated by $[D]\ $\emph{directly}%
, denoted by $[D]\rightarrow \lbrack D^{\prime }].$ We also say that $%
[D_{k}] $ is generated by $[D_{1}],$ if
\begin{equation*}
\lbrack D_{1}]\rightarrow \lbrack D_{2}]\rightarrow \cdots \rightarrow
\lbrack D_{k}].
\end{equation*}

Let $\mathbb{G}(D_{1})=\cup _{g\in \mathbb{G}}\cup _{i=1}^{m^{\prime
}}(gg_{i}^{\prime },gb_{i}^{\prime }).$

Suppose $[(g^{\prime },d^{\prime })]$ is generated by $[(g,d)]$ directly,
then it follows from (\ref{e:decomp}) that%
\begin{equation*}
(g^{\prime },d^{\prime })\sim (gg_{i}^{\prime },gb_{i}^{\prime }+n^{\prime
}d),\text{ for some }i\leq m^{\prime }\text{ and }d\in \Gamma ^{\prime }.
\end{equation*}%
Then by the definition of equivalence, there exist $g_{1}\in \mathbb{G}$ and
$b_{1}\in n\Gamma $ such that
\begin{equation*}
(g^{\prime },d^{\prime })=(g_{1}gg_{i}^{\prime },g_{1}(gb_{i}^{\prime
}+n^{\prime }d)+b_{1}).
\end{equation*}%
Therefore, we have
\begin{equation}
(g^{\prime },d^{\prime }(\text{mod}(n\Gamma ))\in \mathbb{G}(D_{1}),
\label{mod}
\end{equation}%
since $g_{1}n^{\prime }d\in g_{1}(n^{k^{\ast }}(\Gamma /n^{k^{\ast
}-1}))=n\Gamma $ and $b_{1}\in n\Gamma .$

We shall show the following lemma.

\begin{lemma}
\label{l:finite}There are only finitely many equivalent class generated by
\begin{equation*}
D_{1}=\{(g_{1}^{\prime },b_{1}^{\prime }),\cdots ,\{g_{m^{\prime }}^{\prime
},b_{m^{\prime }}^{\prime }\}\}.
\end{equation*}
\end{lemma}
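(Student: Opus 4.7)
The plan is to combine two already-available constraints on a generated type: the mod-$n\Gamma$ restriction (\ref{mod}) on its vertex coordinates, and a uniform diameter bound on $\chi_D$ extracted from Lemma \ref{l:there}. Together they will show that every equivalence class of generated types is representable by a subset of a finite region of $\mathbb{G}\times\Gamma'$, forcing finiteness.

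First I would observe that $\chi_D\subset\mathbb{H}_{k^*}$ for every generated $D$. A ball $\frac{B(0,\delta)}{n'}+d$ appears in $\mathbb{H}_{k^*}$ exactly when $d\in gb'_i+n\Gamma$ for some $g\in\mathbb{G}$, $i\leq m'$, and by (\ref{mod}) every pair $(g',d')$ occurring in a generated type satisfies precisely this congruence. Since $\chi_D$ is connected, it therefore lies inside a single connected component of $\mathbb{H}_{k^*}$.

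Next I would show that every connected component of $\mathbb{H}_{k^*}$ has diameter at most some universal $R$. The set $\mathbb{H}_{k^*}$ is invariant under translation by $n\Gamma$, and each of its defining pieces $gE_{k^*}+nb$ sits within distance $\mathrm{diam}(E_{k^*})\leq 2\delta+o(1)$ of the point $nb\in n\Gamma$. Given any component $C$, choose one such piece contained in $C$ and translate by $-nb\in n\Gamma$; the translated component meets a neighborhood of $B(0,2\delta)$, and Lemma \ref{l:there} then forces it (and hence $C$) to lie in $B(0,2\delta+1)$, giving $\mathrm{diam}(C)\leq 4\delta+2$. Combining with the previous step, $\mathrm{diam}(\chi_D)\leq R$ for every generated $D$.

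Finally, fix any $(g_0,d_0)\in D$. The quotient $\Gamma'/n\Gamma$ is finite, since $\Gamma'=\Gamma/n^{k^*-1}$ has finite index over $\Gamma$ and $[\Gamma:n\Gamma]$ is finite as well; so equivalence via $\lambda_{\mathrm{id},b}$ with $b\in n\Gamma$ lets us translate $d_0$ into a fixed finite system of coset representatives, and (\ref{mod}) already restricts the possible cosets to finitely many. Once $d_0$ is anchored, the diameter bound forces every other $d_i$ to lie in $\Gamma'\cap B(d_0,R)$, a finite set by discreteness of $\Gamma'$, and $g_i$ ranges over the finite group $\mathbb{G}$. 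Hence only finitely many types $D$ occur up to equivalence. The main obstacle is the uniform diameter bound in the middle step: Lemma \ref{l:there} controls only those components of $\mathbb{H}_{k^*}$ touching $B(0,2\delta)$, so the argument hinges on using $n\Gamma$-translation invariance together with the fact that every piece of $\mathbb{H}_{k^*}$ lies near a point of $n\Gamma$ to reduce a general component to the situation of the lemma.
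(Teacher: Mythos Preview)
Your proposal is correct and follows essentially the same route as the paper: use (\ref{mod}) to place $\chi_D$ inside $\mathbb{H}_{k^*}$, translate by an element of $n\Gamma$ so that Lemma~\ref{l:there} applies, and conclude by discreteness of $\Gamma'$ and finiteness of $\mathbb{G}$. The only cosmetic difference is that the paper translates a given generated type $D'$ directly so that one of its balls lands in $B(0,2\delta)$ and then invokes Lemma~\ref{l:there}, whereas you first abstract this into the statement ``every connected component of $\mathbb{H}_{k^*}$ has uniformly bounded diameter'' and then anchor one vertex of $D$ to a coset representative of $\Gamma'/n\Gamma$; the underlying mechanism is identical.
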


\begin{proof}
Here $n^{\prime }=n^{k^{\ast }}\ $and $g(B(0,\delta ))=B(0,\delta )$ for any
$g\in \mathbb{G}.$ Then (\ref{mod}) implies that for any $(g^{\prime
},d^{\prime })\in D^{\prime }$ generated by $D_{1},$ there are $g\in \mathbb{%
G}$, $b\in \Gamma $ and index $i\leq m^{\prime }=n^{k^{\ast }}$ such that%
\begin{equation}
g^{\prime }=gg_{i}^{\prime }\text{ and }d^{\prime }=gb_{i}^{\prime }+nb.
\label{auto}
\end{equation}%
As a result, we have
\begin{eqnarray*}
\bigcup\nolimits_{(g^{\prime },d^{\prime })\in D^{\prime }}\left( \frac{%
B(0,\delta )}{n^{\prime }}+d^{\prime }\right)
&=&\bigcup\nolimits_{(g^{\prime },d^{\prime })\in D^{\prime }}\left( \frac{%
B(0,\delta )}{n^{\prime }}+gb_{i}^{\prime }+nb\right) \\
&=&\bigcup\nolimits_{(g^{\prime },d^{\prime })\in D^{\prime }}\left( g(\frac{%
B(0,\delta )}{n^{k^{\ast }}}+b_{i}^{\prime })+nb\right) ,
\end{eqnarray*}%
which is a connected subset of
\begin{eqnarray*}
\mathbb{H}_{k^{\ast }} &=&\bigcup\nolimits_{g\in \mathbb{G}%
}\bigcup\nolimits_{b\in \Gamma }\left( gE_{k^{\ast }}+nb\right) \\
&=&\bigcup\nolimits_{g\in \mathbb{G}}\bigcup\nolimits_{b\in \Gamma
}\bigcup\nolimits_{i=1}^{m^{\prime }}\left( g(\frac{B(0,\delta )}{n^{k^{\ast
}}}+b_{i}^{\prime })+nb\right) .
\end{eqnarray*}

Take $(g^{\prime },d^{\prime })\in D^{\prime }$ satisfying (\ref{auto}), let
\begin{equation*}
D=D^{\prime }-nb\text{ where }b\in \Gamma .
\end{equation*}%
Then $D^{\prime }\sim D$. Notice that
\begin{equation*}
\bigcup\nolimits_{(g,d)\in D}\left( \frac{B(0,\delta )}{n^{\prime }}%
+d\right) =\left( \bigcup\nolimits_{(g^{\prime },d^{\prime })\in D^{\prime
}}\left( \frac{B(0,\delta )}{n^{\prime }}+d^{\prime }\right) \right) -nb
\end{equation*}
is also a connected subset of $\mathbb{H}_{k^{\ast }}$ since $\mathbb{H}%
_{k^{\ast }}=\mathbb{H}_{k^{\ast }}-nb.$

In particular,
\begin{equation*}
g(\frac{B(0,\delta )}{n^{k^{\ast }}}+b_{i})\subset \frac{B(0,\delta )}{%
n^{\prime }}+d^{\prime }-nb\subset \bigcup\nolimits_{(g,d)\in D}\left( \frac{%
B(0,\delta )}{n^{\prime }}+d\right) ,
\end{equation*}%
where
\begin{equation}
g(\frac{B(0,\delta )}{n^{k^{\ast }}}+b_{i}^{\prime })\subset B(0,\frac{%
\delta }{n^{k^{\ast }}}+\delta )\subset B(0,2\delta )  \label{auto1}
\end{equation}%
due to $|b_{i}^{\prime }|\leq \delta .$

By Lemma \ref{l:there} and (\ref{auto1}),
\begin{equation*}
\bigcup\nolimits_{(g,d)\in D}\left( \frac{B(0,\delta )}{n^{\prime }}+d\right)
\end{equation*}
is a connected subset of $B(0,2\delta +1).$ Since $\Gamma $ is discrete and $%
\mathbb{G}$ is finite, we get the finiteness of types generated by $D_{1}.$
\end{proof}

\subsection{Proof of Theorem 3}

$\ $

Lemma \ref{l:finite} insures the \emph{dust-like} graph-directed structure
of fractals related to $E.$ By (\ref{e:decomp}) we have
\begin{equation*}
K_{D}=\frac{1}{n^{\prime }}\bigcup_{D^{\prime }}K_{D^{\prime }}
\end{equation*}%
is a \emph{disjoint} union by the definition of \emph{type}. Here $n^{\prime
}=n^{k^{\ast }}.$

It follows from Lemmas \ref{iso} and \ref{l:finite} that there are dust-like
graph-directed sets $\{K_{D_{1}},K_{D_{2}},\cdots ,K_{D_{p}}\}$ with
\begin{equation*}
D_{1}=\{(g_{1}^{\prime },b_{1}^{\prime }),\cdots ,\{g_{m^{\prime }}^{\prime
},b_{m^{\prime }}^{\prime }\}\}.
\end{equation*}%
For $i\geq 2$, type $D_{i}$ is generated by $D_{1}.$

Suppose the corresponding adjacent matrix is $A=(a_{i,j})_{1\leq i,j\leq p}.$

We shall show
\begin{equation}
m^{\prime }\cdot \#D_{i}=\sum\nolimits_{j=1}^{p}a_{i,j}\#D_{j}.  \label{cha}
\end{equation}

\begin{lemma}
\label{l:subset}For any $D\in \{D_{1},\cdots ,D_{p}\},$ there exist $k$ and
a subset $D^{\prime }$ of
\begin{equation*}
\{(g_{i_{1}}^{\prime }\cdots g_{i_{k}}^{\prime },(n^{\prime
})^{k-1}S_{i_{1}}^{\prime }\cdots S_{i_{k}}^{\prime }(0)\}_{i_{1}\cdots
i_{k}\in \{1,\cdots ,m^{\prime }\}^{k}}
\end{equation*}%
such that $D\sim D^{\prime }$.
\end{lemma}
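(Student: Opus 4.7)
The plan is induction on the generation depth of $D$, namely the least $k-1\geq 0$ such that $[D_1]\to[D^{(2)}]\to\cdots\to[D^{(k)}]=[D]$ (with the convention that depth $0$ means $D=D_1$). Since every element of $\{D_1,\ldots,D_p\}$ is obtained this way, this covers all cases. The base case $k=1$ is automatic: because $S'_i(0)=b'_i$, one has $D_1=\{(g'_i,b'_i)\}_{i\leq m'}=\{(g'_{i_1},(n')^0 S'_{i_1}(0)):i_1\leq m'\}$, so one takes $D'=D_1$ itself.

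For the inductive step, assume the claim at depth $k-1$, and let $D$ have depth $k$, so that $[D_0]\to[D]$ for some $D_0$ of depth $k-1$. By induction, $D_0\sim D^*$ for some subset $D^*\subset\{(g'_{i_1}\cdots g'_{i_k},(n')^{k-1}S'_{i_1}\cdots S'_{i_k}(0)):i_j\leq m'\}$. The key algebraic identity I will invoke is
\[
(n')^k S'_{i_1}\cdots S'_{i_k}S'_i(0)=g'_{i_1}\cdots g'_{i_k}\,b'_i+n'\cdot(n')^{k-1}S'_{i_1}\cdots S'_{i_k}(0),
\]
which is immediate from the closed form $S'_{i_1}\cdots S'_{i_k}(x)=(g'_{i_1}\cdots g'_{i_k})x/(n')^k+S'_{i_1}\cdots S'_{i_k}(0)$. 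Consequently, for every $(g,d)\in D^*$ and $i\leq m'$, the successor atom $(gg'_i,gb'_i+n'd)$ equals $(g'_{i_1}\cdots g'_{i_k}g'_i,(n')^k S'_{i_1}\cdots S'_{i_k}S'_i(0))$, which is a level-$(k+1)$ atom of the target form. Hence every atom generated from $D^*$ lies in the target set at level $k+1$, and any child type of $D^*$ (being a subset of these atoms) is a subset thereof.

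The remaining point, which I expect to be the most delicate, is tracking the equivalence through the induction. If $\lambda_{g_0,b_0}(D^*)=D_0$ with $g_0\in\mathbb{G}$ and $b_0\in n\Gamma$, substituting $(g'_0,d'_0)=(g_0 g^*_0,g_0 d^*_0+b_0)$ into the atom recipe yields $(g'_0 g'_i,g'_0 b'_i+n'd'_0)=\lambda_{g_0,n'b_0}(g^*_0 g'_i,g^*_0 b'_i+n'd^*_0)$, and $n'b_0=n^{k^*}b_0\in n\Gamma$, so $\lambda_{g_0,n'b_0}$ is a legitimate equivalence. Being an isometric affine transformation of $\mathbb{R}^l$, it sends connected components of $\bigcup_{(g,d)}(B(0,\delta)/n'+d)$ to connected components; hence the direct child $D$ of $D_0$ is equivalent to a corresponding direct child $D^{**}$ of $D^*$, and by the previous paragraph $D^{**}\subset\{(g'_{i_1}\cdots g'_{i_{k+1}},(n')^k S'_{i_1}\cdots S'_{i_{k+1}}(0))\}$. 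This closes the induction with $k$ replaced by $k+1$, proving the lemma.
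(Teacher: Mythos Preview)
Your proof is correct and follows essentially the same inductive approach as the paper: both argue by induction on the generation depth from $D_1$ and both rely on the same key algebraic identity $(n')^{k}S'_{i_1}\cdots S'_{i_{k+1}}(0)=g'_{i_1}\cdots g'_{i_k}b'_{i_{k+1}}+n'\cdot(n')^{k-1}S'_{i_1}\cdots S'_{i_k}(0)$ to show that successor atoms of level-$k$ atoms are level-$(k{+}1)$ atoms. Your version is in fact more careful than the paper's in one respect: the paper's inductive hypothesis tacitly assumes $D$ \emph{is} a subset of level-$k$ atoms rather than merely equivalent to one, whereas you explicitly verify that the equivalence $\lambda_{g_0,b_0}$ on types induces the equivalence $\lambda_{g_0,n'b_0}$ on their children (using $n'b_0\in n\Gamma$), which is exactly the step needed to carry the $\sim$ through the induction.
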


\begin{proof}
If $D$ is generated by $D_{1}$ directly, then
\begin{equation*}
E=K_{D_{1}}=\frac{1}{n^{\prime }}\bigcup_{i_{1}i_{2}}\left( \frac{1}{%
n^{\prime }}(g_{i_{1}}^{\prime }g_{i_{2}}^{\prime }E)+n^{\prime
}S_{i_{1}}^{\prime }S_{i_{2}}^{\prime }(0)\right) .
\end{equation*}%
Then $D$ is equivalent to a subset of $\{(g_{i_{1}}^{\prime
}g_{i_{2}}^{\prime },(n^{\prime })S_{i_{1}}^{\prime }S_{i_{2}}^{\prime
}(0)\}_{i_{1}i_{2}}.$

Inductively, assume that $D$ is a subset of
\begin{equation*}
\{(g_{i_{1}}^{\prime }\cdots g_{i_{k}}^{\prime },(n^{\prime
})^{k-1}S_{i_{1}}^{\prime }\cdots S_{i_{k}}^{\prime }(0)\}_{i_{1}\cdots
i_{k}\in \{1,\cdots ,m^{\prime }\}^{k}}
\end{equation*}
and $D^{\prime }$ is generated by $D$ directly, we will show that $D^{\prime
}$ is equivalent to a subset of
\begin{equation*}
\{(g_{i_{1}}^{\prime }\cdots g_{i_{k+1}}^{\prime },(n^{\prime
})^{k}S_{i_{1}}^{\prime }\cdots S_{i_{k+1}}^{\prime }(0)\}_{i_{1}\cdots
i_{k+1}\in \{1,\cdots ,m^{\prime }\}^{k+1}}.
\end{equation*}%
For this, we have
\begin{equation*}
K_{D}=\frac{1}{n^{\prime }}\left( \bigcup_{(g,d)\in
D}\bigcup_{i_{k+1}=1}^{m^{\prime }}(\frac{1}{n^{\prime }}(gg_{i_{k+1}}^{%
\prime }E)+gb_{i_{k+1}}^{\prime }+n^{\prime }d)\right) ,
\end{equation*}%
where for $(g,d)=(g_{i_{1}}^{\prime }\cdots g_{i_{k}}^{\prime },(n^{\prime
})^{k-1}S_{i_{1}}^{\prime }\cdots S_{i_{k}}^{\prime }(0))\in D,$%
\begin{equation*}
gg_{i}^{\prime }=g_{i_{1}}^{\prime }\cdots g_{i_{k}}^{\prime
}g_{i_{k+1}}^{\prime },
\end{equation*}%
and
\begin{eqnarray*}
&&gb_{i_{k+1}}^{\prime }+n^{\prime }d \\
&=&g_{i_{1}}^{\prime }\cdots g_{i_{k}}^{\prime }b_{i_{k+1}}+(n^{\prime
})^{k}S_{i_{1}}^{\prime }\cdots S_{i_{k}}^{\prime }(0) \\
&=&(n^{\prime })^{k}\left( \frac{1}{(n^{\prime })^{k}}(g_{i_{1}}^{\prime
}\cdots g_{i_{k}}^{\prime })b_{i_{k+1}}^{\prime }+S_{i_{1}}^{\prime }\cdots
S_{i_{k}}^{\prime }(0)\right) \\
&=&(n^{\prime })^{k}(S_{i_{1}}^{\prime }\cdots S_{i_{k}}^{\prime
})b_{i_{k+1}}^{\prime } \\
&=&(n^{\prime })^{k}(S_{i_{1}}^{\prime }\cdots S_{i_{k+1}}^{\prime })(0).
\end{eqnarray*}%
The inductive process is finished.
\end{proof}

Because $E$ has no complete overlaps, then%
\begin{equation*}
E=\bigcup_{i_{1}i_{2}\cdots i_{k}}\left( \frac{1}{(n^{\prime })^{k}}%
(g_{i_{1}}^{\prime }\cdots g_{i_{k}}^{\prime }E)+S_{i_{1}}^{\prime }\circ
\cdots \circ S_{i_{k}}^{\prime }(0)\right) ,
\end{equation*}%
where
\begin{eqnarray}
&&g_{i_{1}}^{\prime }\cdots g_{i_{k}}^{\prime }=g_{j_{1}}^{\prime }\cdots
g_{j_{k}}^{\prime }\text{ and }S_{i_{1}}^{\prime }\circ \cdots \circ
S_{i_{k}}^{\prime }(0)=S_{j_{1}}^{\prime }\circ \cdots \circ
S_{j_{k}}^{\prime }(0)\text{ }  \label{e:overlap} \\
&&\text{if and only if }i_{1}\cdots i_{k}=j_{1}\cdots j_{k}\in \{1,\cdots
,m^{\prime }\}^{k}.  \notag
\end{eqnarray}
If $D\in \{D_{1},\cdots ,D_{p}\}$ satisfies $K_{D}=\frac{1}{n^{\prime }}%
\bigcup_{D^{\prime }}K_{D^{\prime }},$ by Lemma \ref{l:subset} and (\ref%
{e:overlap}), then
\begin{equation*}
m^{\prime }\cdot \#D=\sum\nolimits_{D^{\prime }}\#D^{\prime }.
\end{equation*}%
Then (\ref{cha}) follows.

Then $\{K_{D_{1}},K_{D_{2}},\cdots ,K_{D_{p}}\}$ are dust-like
graph-directed set with ratio $(n^{\prime })^{-1}(=n^{-k^{\ast }})$ and
integer characteristic $m^{\prime }=m^{k^{\ast }}.$

Therefore, it follows from Theorem 1 that $E=K_{D_{1}}$ is bilipschitz
equivalent to $\Sigma _{m^{\prime }}^{(n^{\prime })^{-1}},$ by Lemma \ref%
{sym}, $E$ is bilipschitz equivalent to $\Sigma _{m}^{n^{-1}}.$

\subsection{Proof of Proposition 1}

$\ $

Let
\begin{equation*}
S_{i}(x)=g_{i}(x)/n+b_{i}.
\end{equation*}

Assume that $E$ has complete overlaps, i.e., there are two sequences $%
i_{1}\cdots i_{t}$ and $j_{1}\cdots j_{t}$ with $i_{1}\neq j_{1}$ such that%
\begin{equation*}
S_{i_{1}}\circ \cdots \circ S_{i_{t}}\equiv S_{j_{1}}\circ \cdots \circ
S_{j_{t}},
\end{equation*}%
that is,
\begin{equation}
g_{i_{1}}\cdots g_{i_{t}}=g_{j_{1}}\cdots g_{j_{t}}  \label{alg1}
\end{equation}%
and $S_{i_{1}}\circ \cdots \circ S_{i_{t}}(0)=S_{j_{1}}\circ \cdots \circ
S_{j_{t}}(0)$ which means%
\begin{equation}
b_{i_{1}}+\frac{g_{i_{1}}b_{i_{2}}}{n}+\cdots +\frac{(g_{i_{1}}\cdots
g_{i_{t-1}})b_{i_{t}}}{n^{t-1}}=b_{j_{1}}+\frac{g_{j_{1}}b_{j_{2}}}{n}%
+\cdots +\frac{(g_{j_{1}}\cdots g_{j_{t-1}})b_{j_{t}}}{n^{t-1}}.
\label{alg2}
\end{equation}%
Notice that (\ref{alg1}) and (\ref{alg2}) imply an edge chain starting at an
original vertex $(b_{i_{1}}-b_{j_{1}},g_{i_{1}},g_{j_{1}})$ and ending at a
boundary vertex $(0,g_{i_{1}}\cdots g_{i_{t}},g_{j_{1}}\cdots g_{j_{t}})$ as
follows
\begin{eqnarray*}
&&(b_{i_{1}}-b_{j_{1}},g_{i_{1}},g_{j_{1}}) \\
&=&(x_{1},g_{1},g_{1}^{\prime })\overset{(i_{2},j_{2})}{\rightarrow }%
(x_{2},g_{2},g_{2}^{\prime })\overset{(i_{3},j_{3})}{\rightarrow }\cdots
\overset{(i_{k},j_{k})}{\rightarrow }(x_{k},g_{k},g_{k}^{\prime }) \\
&&\text{ }\overset{(i_{k+1},j_{k+1})}{\rightarrow }\cdots \overset{%
(i_{t},j_{t})}{\rightarrow }(x_{t},g_{t},g_{t}^{\prime })=(0,g_{i_{1}}\cdots
g_{i_{t}},g_{j_{1}}\cdots g_{j_{t}}),
\end{eqnarray*}%
where $g_{k}=g_{i_{1}}\cdots g_{i_{k}}$ and $g_{k}^{\prime }=g_{j_{1}}\cdots
g_{j_{k}}$ with $g_{t}=g_{t}^{\prime }$ and
\begin{equation}
x_{k+1}=nx_{k}+g_{k}b_{i_{k+1}}-g_{k}^{\prime }b_{j_{k+1}}\in \Gamma .
\label{indu}
\end{equation}%
In fact, we can prove the following equation by induction:
\begin{eqnarray}
&&x_{k}+\frac{1}{n}g_{k}(b_{i_{k+1}}+\frac{g_{i_{k+1}}b_{i_{k+2}}}{n}+\cdots
+\frac{g_{i_{k+1}}\cdots g_{i_{t-1}}b_{i_{t}}}{n^{t-1-k}})  \label{bound} \\
&=&\frac{1}{n}g_{k}^{\prime }(b_{j_{k+1}}+\frac{g_{j_{k+1}}b_{j_{k+2}}}{n}%
+\cdots +\frac{g_{j_{k+1}}\cdots g_{j_{t-1}}b_{j_{t}}}{n^{t-1-k}}).  \notag
\end{eqnarray}%
When $k=1,$ it is true due to (\ref{alg2}). Assume that it is true for $k,$
then (\ref{indu}) implies
\begin{eqnarray*}
&&x_{k+1}+\frac{1}{n}g_{k+1}(b_{i_{k+2}}+\cdots +\frac{g_{i_{k+2}}\cdots
g_{i_{t-1}}b_{i_{t}}}{n^{t-2-k}}) \\
&=&\frac{1}{n}g_{k+1}^{\prime }(b_{j_{k+2}}+\cdots +\frac{g_{j_{k+2}}\cdots
g_{j_{t-1}}b_{j_{t}}}{n^{t-2-k}}).
\end{eqnarray*}%
Furthermore, $x_{t}=0$ since $x_{t}=nx_{t-1}+g_{t-1}b_{i_{t}}-g_{t-1}^{%
\prime }b_{j_{t}}$ and
\begin{equation*}
x_{t-1}+\frac{1}{n}g_{t-1}b_{i_{t}}=\frac{1}{n}g_{t-1}^{\prime }b_{j_{t}}
\end{equation*}%
due to (\ref{bound}). It also follows from (\ref{bound}) that
\begin{equation*}
|x_{k}|\leq 2\max_{i}|b_{i}|(\frac{1}{n}+\frac{1}{n^{2}}+\cdots )\leq
2\max_{i}|b_{i}|/(n-1)=M.
\end{equation*}%
That means $x_{k}\in \{x\in \Gamma :|x|\leq M\}.$

Assume that there exists such path from original vertex to boundary vertex,
in the same way we can prove that $E$ has complete overlaps.

\medskip

The proof of Proposition 1 is completed.

\bigskip

\section{Open questions}

\textbf{Question 1}: How about the rigidity when their characteristics are
\emph{irrational}?

An interesting class of self-similar sets with overlaps is $\{E_{\lambda
}\}_{\lambda },$ where $E_{\lambda}$ is generated by
\begin{equation*}
S_{1}(x)=x/3,S_{\lambda }=x/3+\lambda /3,S_{3}(x)=x/3+2/3,
\end{equation*}%
This class have been studied by Kenyon \cite{K2}, Rao and Wen \cite{RW} and
\'{S}wi\c{a}tek and Veerman \cite{SV}.

Then it is proved in \cite{GLX} that $E_{2/3^{n}}$ and $E_{2/3^{n^{\prime
}}} $ are bilipschitz equivalent for any $n,n^{\prime }\geq 1,$ where $\dim
_{H}E_{2/3^{n}}=\dim _{H}E_{2/3^{n^{\prime }}}=\frac{\log \frac{3+\sqrt{5}}{2%
}}{\log 3}.$

In fact, $E_{2/3^{n}}$ can generate graph-directed sets with adjacent matrix
\begin{equation*}
M_{n}=\left(
\begin{array}{ccccccccc}
1 & 1 & 0 & 0 & \cdots & 0 & 0 & 0 & 0 \\
0 & 0 & 1 & 1 & \cdots & 0 & 0 & 0 & 0 \\
\vdots & \vdots & \vdots & \vdots & \  & \vdots & \vdots & \vdots & \vdots
\\
0 & 0 & 0 & 0 & \cdots & 1 & 1 & 0 & 0 \\
0 & 0 & 0 & 0 & \cdots & 0 & 0 & 1 & 1 \\
1 & 2 & 0 & 0 & \cdots & 0 & 0 & 0 & 0 \\
0 & 0 & 1 & 2 & \cdots & 0 & 0 & 0 & 0 \\
\vdots & \vdots & \vdots & \vdots & \  & \vdots & \vdots & \vdots & \vdots
\\
0 & 0 & 0 & 0 & \cdots & 1 & 2 & 0 & 0 \\
0 & 0 & 0 & 0 & \cdots & 0 & 0 & 1 & 2%
\end{array}%
\right) _{2^{n}\times 2^{n}}.
\end{equation*}%
Here $M_{n}$ has Perron-Frobenius eigenvalue $(3+\sqrt{5})/2$ and
corresponding positive eigenvector.

Our equation is that for $\lambda _{1}=$6/7 and $\lambda _{2}=$8/9, then
their adjacent matrices of corresponding graph-directed sets are
\begin{equation*}
B_{1}=\left(
\begin{array}{cccccc}
1 & 1 & 0 & 0 & 0 & 0 \\
1 & 1 & 1 & 0 & 0 & 0 \\
1 & 0 & 1 & 1 & 0 & 0 \\
1 & 1 & 1 & 0 & 1 & 0 \\
1 & 0 & 1 & 1 & 0 & 1 \\
1 & 0 & 1 & 1 & 0 & 2%
\end{array}%
\right) \text{ and }B_{2}=\left(
\begin{array}{ccc}
1 & 1 & 1 \\
1 & 1 & 2 \\
0 & 1 & 1%
\end{array}%
\right) .\text{ }
\end{equation*}%
Using Mathematica 7.0, we find these two matrices have the same
Perron-Frobenius eigenvalue 2.879$\cdots ,$ we ask
\begin{equation*}
\text{whether }E_{\lambda _{1}}\text{ and }E_{\lambda _{2}}\text{ are
bilipschitz equivalent or not?}
\end{equation*}%
\begin{figure}[tbph]
\centering\includegraphics[width=0.6\textwidth]{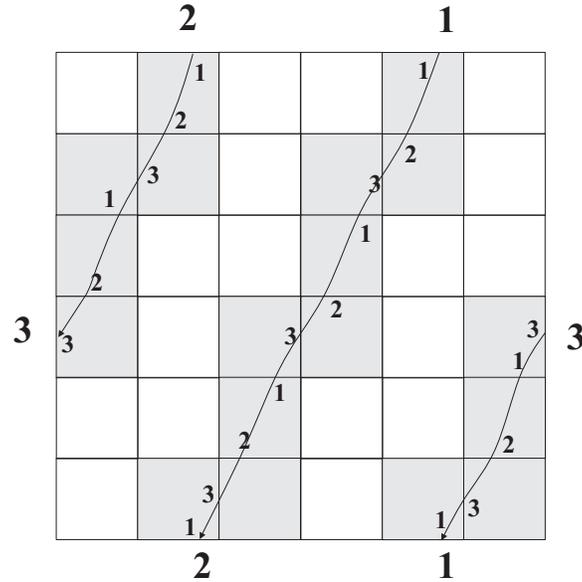} \vspace{-0.5cm}
\caption{The self-similar set which is not totally disconnected}
\end{figure}

\bigskip

Looking at Theorem 3, we find there is an algorithm to test the existence of
complete overlaps, then the following question arise naturally.

\textbf{Question 2}: How to test in polynomial time the total
disconnectedness of self-similar set in $\Lambda ,$ specially for
self-similar set in the form (\ref{total})?

The total disconnectedness seems to be difficult to test. Please see the
following interesting example.

\begin{example}
The initial self-similar pattern in Figure $7$ insures this self-similar set
includes infinitely many curves.

In the unit cube, we have $\gamma _{1}$ from placement $2$ to placement $3,$
$\gamma _{2}$ from placement $1$ to placement $2$ and $\gamma _{3}$ from
placement $3$ to placement $1$. In the small squares with sidelength $1/6$,
we also have small curves which are similar to $\gamma _{1},\gamma
_{2},\gamma _{3}$ respectively. Therefore, this self-similar set includes $%
\gamma _{1},\gamma _{2},\gamma _{3}$.

In fact, $\{\gamma _{1},\gamma _{2},\gamma _{3}\}$ are graph-directed sets $%
( $satisfying the open set condition$)$ with ratio $1/6$ and adjacent matrix
\begin{equation*}
\left(
\begin{array}{ccc}
2 & 2 & 1 \\
3 & 3 & 3 \\
1 & 1 & 2%
\end{array}%
\right) .
\end{equation*}
\end{example}

\bigskip
  \textbf{Additional note.} Later on, the authors~\cite{XiXi13} further studied the
  problem on the Lipschitz equivalence of self-similar sets satisfying the OSC. By
        some different ideas, we can prove a more general form of Theorem 3 but
  at the cost of tedious complications.

\end{document}